\newcommand{\bpr}{\begin{trivlist} \item[]{\bf Proof. }}
\newcommand{\epr}{\hspace*{\fill} $\qed$\end{trivlist}}
\newcommand{\be}{\begin{eqnarray}}
\newcommand{\ee}{\end{eqnarray}}
\newcommand{\ba}{\begin{align}}
\newcommand{\ea}{\end{align}}
\newcommand{\bi}{\begin{itemize}}
\newcommand{\ei}{\end{itemize}}
\newcommand{\secref}[1]{Section~\ref{sec:#1}}
\newcommand{\seclab}[1]{\label{sec:#1}}
\newcommand{\eqlab}[1]{\label{eq:#1}}
\renewcommand{\eqref}[1]{(\ref{eq:#1})}
\newcommand{\figref}[1]{Fig.~\ref{fig:#1}}
\newcommand{\figlab}[1]{\label{fig:#1}}
\newcommand{\propref}[1]{Proposition~\ref{proposition:#1}}
\newcommand{\proplab}[1]{\label{proposition:#1}}
\newcommand{\lemmaref}[1]{Lemma~\ref{lemma:#1}}
\newcommand{\lemmalab}[1]{\label{lemma:#1}}
\newcommand{\remref}[1]{Remark~\ref{remark:#1}}
\newcommand{\remlab}[1]{\label{remark:#1}}
\newcommand{\thmref}[1]{Theorem~\ref{theorem:#1}}
\newcommand{\thmlab}[1]{\label{theorem:#1}}
\newcommand{\appref}[1]{Appendix~\ref{app:#1}}
\newcommand{\applab}[1]{\label{app:#1}}
\newtheorem{theorem}{Theorem}[section]
\newtheorem{proposition}[theorem]{Proposition}
\newtheorem{lemma}[theorem]{Lemma}
\newtheorem{remark}[theorem]{Remark}
\numberwithin{equation}{section}
\definecolor{orange}{RGB}{255,127,0}
\begin{document}
\title{Revisiting the Kepler problem with linear drag using the blowup method and normal form theory}
\author{K. Uldall Kristiansen}
\address{Department of Applied Mathematics and Computer Science, 
Technical University of Denmark, 
2800 Kgs. Lyngby, 
Denmark }



 \begin{abstract}

In this paper, we revisit the Kepler problem with linear drag. With dissipation, the energy and the angular momentum are both decreasing, but in \cite{margheri2017a} it was shown that the eccentricity vector has a well-defined limit in the case of linear drag. This limiting eccentricity vector defines a conserved quantity, and in the present paper, we prove that the corresponding invariant sets are smooth manifolds. These results rely on normal form theory and a blowup transformation, which reveals that the invariant manifolds are (nonhyperbolic) stable sets of (limiting) periodic orbits. Moreover, we identify a separate invariant manifold which corresponds
 to a zero limiting eccentricity vector. This manifold is obtained as a generalized center manifold over the zero eigenspace of a zero-Hopf point. Finally, we present a detailed blowup analysis, which provides a geometric picture of the dynamics. 
We believe that our approach and results will have general interest in problems with blowup dynamics.
\bigskip
\smallskip

\noindent \textbf{keywords.} Invariant manifolds, nonhyperbolic sets, dynamical systems theory, blowup. 
 \end{abstract}
 \maketitle
\maketitle
\section{Introduction}
In this paper, we consider the Kepler problem with linear drag \cite{margheri2014a,margheri2017a}
\begin{align}
 \ddot u +\delta \dot u + c \frac{u}{\vert u\vert^3}=0,\eqlab{keplerd}
\end{align}
with $u(t)\in \mathbb R^3\backslash \{0\}$, $c>0$ and for $\delta>0$. 
%
%
%
The singularity at $u=0$ corresponds to the collision limit and for $\delta=0$ (no drag/damping), we obtain the classical Kepler problem, whose orbits are conic sections. In fact, as in the classical case, we may scale $u$ and $t$ to achieve $c=1$ so we will assume this henceforth. (In this way, $\delta$ is replaced by $\delta c^{-1/2}$). 
For $\delta=0$, the energy:
\begin{align*}
 E(u,\dot u) = K(\dot u)+P(u),\quad K(\dot u):=\frac12 \vert \dot u\vert^2,\,P(u):= -\frac{1}{\vert u\vert},
\end{align*}
the angular momentum:
\begin{align*}
 L(u,\dot u) = u\wedge \dot u,
\end{align*}
and the eccentricity vector
\begin{align*}
 \mathcal E(u,\dot u) = \dot u \wedge L(u,\dot u)-\frac{u}{\vert u\vert},
\end{align*}
are all conserved quantities. 
For $\delta\ne 0$, we have $\frac{d}{dt}E =-\delta \vert \dot u\vert^2\le 0$ and hence the energy is monotonically decreasing for $\delta>0$. 
Moreover, a simple calculation shows that
\begin{align}
 \frac{d}{dt}L = -\delta L,\eqlab{Lt}
\end{align}
so that 
\begin{align}\eqlab{Lsol}L(u(t),\dot u(t))=e^{-\delta t}L(u(0),\dot u(0)),\end{align} and the angular momentum is therefore exponentially decreasing. Notice, however, that the direction of $L$ is constant and the motion $(u,\dot u)$ is therefore contained in a plane (the orbital plane).  

$\mathcal E$ is also not conserved for $\delta\ne 0$, but in \cite{margheri2017a} it was shown that there exists a limiting eccentricity vector: Let $\phi_t(u,\dot u)$ denote flow associated with \eqref{keplerd}. Then 
\begin{align*}
 \mathcal E_\infty(u,\dot u) = \lim_{t\rightarrow t_{\text{max}}^-} \mathcal E(\phi_t(u,\dot u)),
\end{align*}
exists for all $u,\dot u$. Here $t_{\text{max}}$ is the maximum time of existence (finite for $L=0$, infinite for $L\ne 0$, see \cite{margheri2014a}). In \cite{margheri2017a}, it was shown that the components of $\mathcal E_\infty$ are functional independent and rotationally equivariant: $\mathcal E_\infty (R u,R\dot u)=R \mathcal E_\infty(u,\dot u)$ for all $R\in SO(3)$, and that $\vert \mathcal E_\infty(u,\dot u)\vert$ attains all values in $(0,1]$. If $\vert \mathcal E_\infty(u,\dot u)\vert=1$ then $L=0$, see \cite{margheri2017a}. 
In \cite{margheri2017ab}, the same authors generalized their results to the case, where $\delta$ is a function $\delta(\vert u\vert)$ of $\vert u\vert$ satisfying $\delta(\vert u\vert)\ge c>0$. In \cite{margheri2020a}, the drag $\delta(\vert u\vert)$ was singular at $u=0$, and they showed that in some cases, the limiting eccentricity vector can be discontinuous.

The question of smoothness of $\mathcal E_\infty$ was left open in \cite{margheri2017a}. 
In this paper, we will give a different characterization of $\mathcal E_\infty$, which will allow us to address the issue of smoothness for $\vert \mathcal E_\infty\vert\in (0,1)$. At the same time, using dynamical systems theory, we identify a new smooth invariant manifold corresponding to $\mathcal E_\infty=0$, which acts as a center of the oscillating orbits with $\vert \mathcal E_\infty\vert\in (0,1)$. 
It should be said that this invariant manifold, which will be one-dimensional in a reduced phase space, corresponding to $\mathcal E_\infty=0$, was actually derived as a formal series in \cite{haraux2021a}, which sparked the interest of the present author. Essentially, our proof shows that this series is summable in the sense of Borel-Laplace. 
Separately, using blowup and compactification as our main tools, we provide a geometric description of the  dynamics. This will shed further light on $\vert \mathcal E_\infty\vert\rightarrow 1$. 

The study of dissipation in celestial mechanics has a long history. It even dates back to Jacobi \cite{clebsch2009a}, who introduced dissipative forces of the type $-\delta \vert \dot u\vert^{n-1}\dot  u$; the case $n=1$ corresponds to the linear drag studied in the present paper. There are at least two important mechanisms  for dissipation in celestial mechanics: particle collisions due to nebula (Stokes' dissipation) and solar radiation (Poynting-Robertson dissipation), see \cite{celletti2011a,margheri2014a}.

Corne and Rouche \cite{corne1973a} and  Diacu \cite{diacu1999a} were perhaps the first contributors towards the development of a qualitative theory of the Kepler problem with drag \eqref{keplerd}
for general families of non-constant drag forces $\delta=\delta(u,\dot u)$, depending on $u$ and $\dot u$. \cite{corne1973a} considered \eqref{keplerd} with $\delta (u,\dot u) = k(\vert \dot u\vert)/\vert \dot u\vert$ and showed, under some additional assumptions on the function $k$, that all solutions go to the singularity (potentially in finite time). On the other hand, \cite{diacu1999a} analyzed the qualitative dynamics of the dissipative Kepler problem within a generalized class of Stokes drag; this family includes the important Poynting-Robertson case. 
Many years later, Margheri, Ortega, and Rebelo in \cite{margheri2014a} studied the Kepler problem with linear drag \eqref{keplerd} and provided a more thorough description of the dynamics. In particular, they proved that the system is complete, i.e. solutions exist globally in time, on the set of nonzero angular momentum. Later in \cite{margheri2017a}, the same authors then provided a more geometric description, including the properties of the limiting eccentricity vector. Their results also showed that $\limsup_{t\rightarrow \infty}$ and $\liminf_{t\rightarrow\infty}$ of $\vert u(t)\vert\vert L(t)\vert^{-2}$ both exist along orbits with $\vert \mathcal E_\infty \vert \in (0,1)$,
see also \cite{haraux2021a}. 

In parallel, there has been some studies of dissipative versions of the restricted three body problem, see e.g. \cite{celletti2011a,hadjidemetriou2010a,margheri2012a}. In \cite{celletti2011a} the authors used numerical methods (based upon Fast Lyapunov Indicators (FLI))
to provide information on the different regions of the phase space. They demonstrated both collision and non-collision trajectories. Interestingly, they also documented periodic orbit attractors, but only in the case of linear and Stokes drags. In contrast, in the case of the Poynting-Robertson dissipation, the authors found no other attractors beside the primaries (collisions). 
Following on from this research, \cite{margheri2012a} studied the existence (and nonexistence) of periodic orbits, including Hopf bifurcations around the libration points $L_4$ and $L_5$. 

In this paper, we will use the blowup method and normal form theory to study the dynamics of \eqref{keplerd}. In the context of celestial mechanics and Hamiltonian systems, normal form theory has a long history, dating back to the work of Poincar\'e and Birkhoff. In fact, KAM theory \cite{arnold1989a} itself may be viewed as a normal form theory. 
On the other hand, the blowup method provides a general framework for studying degenerate equilibria in local dynamical systems theory, where the hyperbolic theory (e.g. Hartman-Grobman and center manifolds) does not apply. The rough idea of this approach is to apply a non-invertible transformation (like polar coordinates) that blows up the equilibrium to a sphere (or cylinders in case of lines of degenerate equilibria). 
By appropriately choosing weights associated to the transformation, it is possible (at least for analytic systems) to divide the resulting vector-field by a power of the radius, measuring the distance to the equilibrium. This gives rise to a new vector-field, only equivalent to the original one away from the equilibrium, for which hyperbolicity (or ellipticity) may be (partially) gained on the blowup of the singularity. Sometimes this approach of blowing up equilibria has to be used successively, see e.g. \cite{dumortier2006a}. 

In recent years, this blowup approach has gained prominence within the area of singular perturbation theory, because here degenerate equilibria occur naturally, see \cite{dumortier_1991,dumortier_1993,dumortier_1996,krupa_relaxation_2001}.
In combination, Fenichel's geometric singular perturbation theory and the blowup method has been very successful in describing global phenomena in slow-fast models, see e.g. \cite{kosiuk2011a,kosiuk2015a,kristiansen2017a,2019arXiv190312232U,uldall2021a}. More recently, the blowup approach has been generalized with the purpose of ``gaining smoothness'', rather than hyperbolicity, in the context of smooth systems approaching nonsmooth ones, see \cite{jelbart2021a,jelbart2021b,kristiansen2018a,Llibre07}. 

Obviously, blowup can also take on a different meaning in mathematics, namely (finite time) blowup of solutions of (ordinary or partial) differential equations. 
In dynamical systems theory, blowup solutions can be studied by Poincar\'e or Poincar\'e-Lyapunov compactification (which in fact bear some resemblance to the blowup method), see \cite{dumortier2006a}. Upon compactification one can study equilibria, again after proper desingularization of the vector-field, at infinity and these can be analyzed by local methods of dynamical systems theory. In particular, such points at infinity are sometimes completely degenerate, which can then be resolved by the blowup method. 

Related to blowup of solutions is the existence of solutions approaching true singularities of ordinary differential equations; like the collision limit $u=0$ for \eqref{keplerd} where the associated vector-field is ill-defined. 
The analysis of collision (as well as near-collision) solutions in the $n$-body problem has a long history, also dating back to Poincar\'e, see also \cite{celletti2006a,duignan2020a,duignan2021a}. For the two-body problem, which can be reduced to \eqref{keplerd}$_{\delta=0}$, there is a change of coordinates (the Levi-Cevita transformation) and a nonlinear transformation of time (related to desingularization), that transforms the collision into a regular point of the equations. The Levi-Cevita transformation is -- similar to a blowup transformation -- not invertible; it is in fact a double cover. More generally in the $n$-body problem, the Levi-Cevita transformation can be applied to show that solutions of the $n$-body problem can be analytically continued through isolated binary collisions (this is also known as regularization in the context of celestial mechanics). Interestingly, Mcgehee \cite{mcgehee1974a} used the blowup method to study the more complicated  triple collision in the context of the collinear three-body problem. Indeed, the author blew up the collision set to a sphere and upon applying desingularization, he obtained hyperbolic equilibria points on a collision manifold. This led the author (through hyperbolic invariant manifolds) to conclude that the triple collision cannot be regularized. Subsequently, this approach was used in \cite{elbialy1990a} to show that the simultaneous binary collision scenario was $C^0$-regularizable. Interestingly,  \cite{mart1999a} proved that it is exactly $C^{8/3}$-regularizable in the collinear case. A separate geometric prove -- based upon normal form theory and the blowup approach of \cite{elbialy1990a,mcgehee1974a} --  was given in \cite{duignan2020a}. This approach led the same authors in \cite{duignan2021a} to prove that the same results hold in the planar case, a result that was initially conjectured by   \cite{mart1999a}.

\subsection{Outline}
The paper is organized as follows: In \secref{existence}, we lay out our approach (based upon certain blowup transformations) for characterizing $\mathcal E_\infty$ and present two theorems on the existence of smooth invariant manifolds of \eqref{keplerd}: \thmref{main1} for the existence of the orbit corresponding to $\mathcal E_\infty=0$ and \thmref{main2} for the existence of a smooth invariant manifold corresponding to $\vert \mathcal E_\infty\vert \in (0,1)$, see also \thmref{main3}. We prove \thmref{main1} in \secref{main1} using general results on Gevrey-1 invariant manifolds $y=Y(x)$ for analytic systems of the form $x^2 \frac{dy}{dx} = F(x,y)$, with $F(0,0)=0$ and $D_y F(0,0)$ non-singular. It is author's impression that these results are not so well-known. We follow \cite{bonckaert2008a}, which proves the existence of such manifolds (and certain normal forms) in perhaps the most accessible way.  \thmref{main2} is proven in \secref{main2} using normal form theory (based upon averaging) to set up an appropriately regular equation for the invariant manifolds that can be solved upon application of the implicit function theorem. This approach may have general interest. In \secref{blowup}, we apply a sequence of blowup transformations along with an appropriate compactification in order to provide a geometric description of the dynamics. We summarize this in \figref{final}. From \cite{margheri2017a}, it is known that $\vert \mathcal E_\infty\vert$ cannot exceed $1$. The results of our blowup analysis, will provide a different characterization of this fact that also allow us to address the subtleties of $\vert \mathcal E_\infty\vert\rightarrow 1$. We lay this out in further details in our final discussion section, \secref{disc}.

\begin{figure}[h!]
 	\begin{center}
 		{\includegraphics[width=.9\textwidth]{./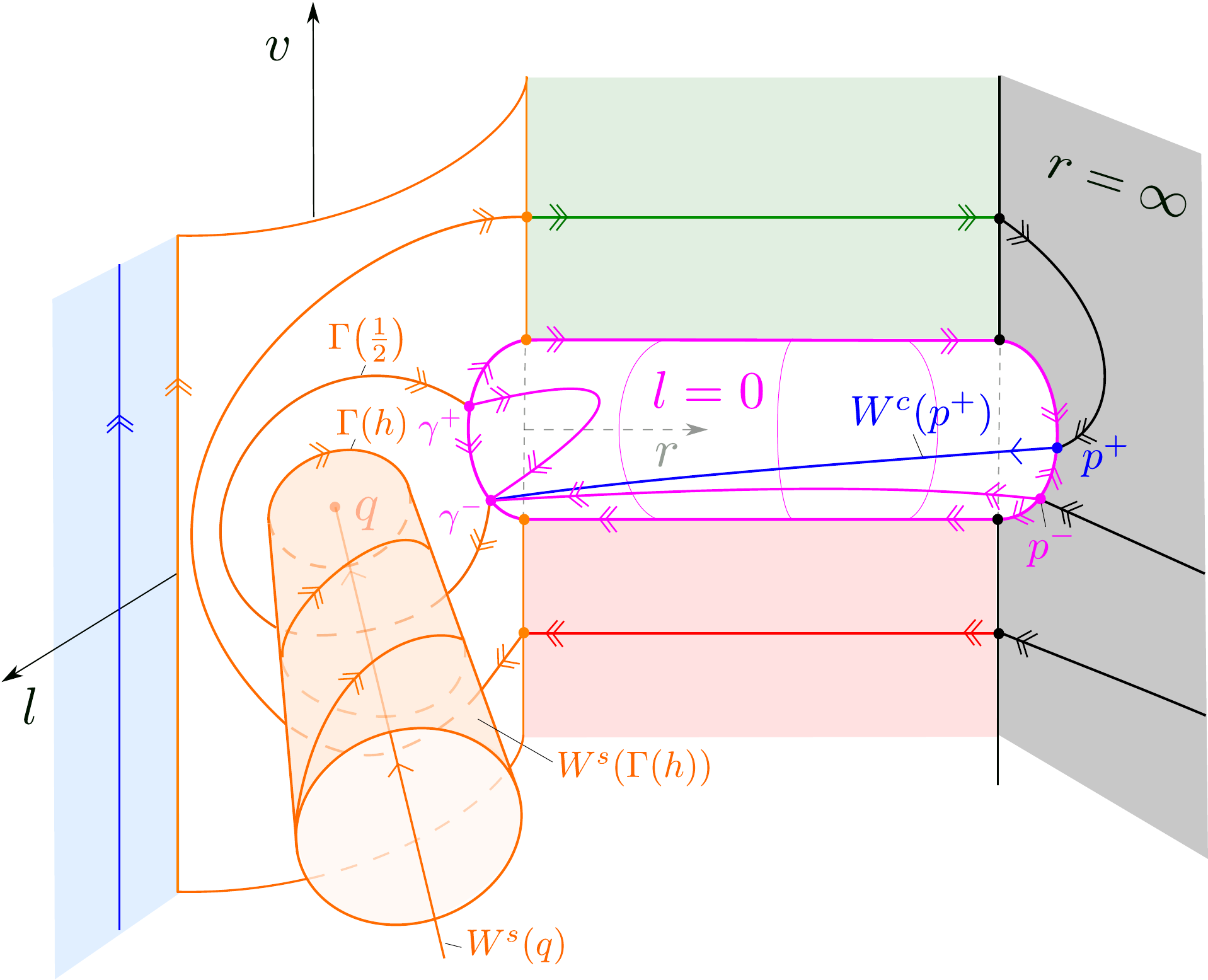}}
 		\caption{A geometric picture of the dynamics of \eqref{keplerd} upon blowup (and desingularization). Here $l=\vert L\vert$ and $v$ is defined by $v=l\dot r$, see \eqref{r1veqn}. The invariant manifolds $W^s(q)$ and $W^s(\Gamma(h))$ are stable sets of a zero-Hopf point $q$ and periodic orbits $\Gamma(h)$, $h\in (0,\frac12)$, respectively, on the blowup of $r=l=0$; the direction normal to $\Gamma(h)$, where $l>0$, is nonhyperbolic.  $W^s(\Gamma(h))$ corresponds to $\vert \mathcal E_\infty\vert \in (0,1)$ whereas $W^s(q)$ corresponds to $\mathcal E_\infty =0 $. Finally, $\vert \mathcal E_\infty\vert=1$ on the $l=0$-cylinder in purple. See \secref{blowup} for further details. The heteroclinic cycle $\Gamma_1\!\left(\frac12\right)$ is important for the description of $\vert \mathcal E_\infty\vert\rightarrow 1$, see \secref{disc}. Moreover, the heteroclinic orbits within $l=0$ connecting $\gamma^+$ and $\gamma^-$ correspond to ejection-collision orbits with $r(t)\rightarrow 0$ in backwards and forward time, see \cite{margheri2014a}. On the other hand, the connections between $p^-$ at infinity and $\gamma^-$ are capture-collision orbits with $r(t)\rightarrow 0$ in backwards and forward time, see e.g. \cite{diacu1999a,margheri2014a}. The unique capture-collision connection between the nonhyperbolic saddle $p^+$ and $\gamma^-$ is the boundary between ejection-collision and capture-collision orbits, see \remref{Wc}. }\figlab{final}
 	\end{center}
 \end{figure}

\section{Existence of invariant manifolds}\seclab{existence}
Since the direction of $L$ is preserved, it is without loss of generality to consider $u(t)\in \mathbb R^2$. We will do so henceforth. 
Upon identifying $\mathbb R^2$ with $\mathbb C$ in the usual way, we put 
\begin{align}
u:=re^{i\theta},\eqlab{ueqn}
\end{align}
and let  
\begin{align}\eqlab{leqn}
l:=\vert L\vert =r^2\dot \theta\ge 0,
\end{align}
denote the magnitude of the angular momentum. Then \eqref{keplerd} becomes
\begin{equation}\eqlab{rttlt}
\begin{aligned}
 \ddot r &=-\frac{1}{r^2}+\frac{l^2}{r^3}-\delta \dot r,\\
 \dot l&=-\delta l.
\end{aligned}
\end{equation}

For the purpose of this section, we suppose that $l>0$, and define the coordinates $r_1$ and $v$ by 
\begin{align}
r=l^2 r_1,\quad  v=l\dot r\eqlab{r1veqn}
\end{align} 
These coordinates appear in a systematic way through our blowup approach, see \secref{blowup}.
Then the kinetic energy, $K$, takes the following form:
\begin{equation}\nonumber
\begin{aligned}
 K(\dot u) &= \frac12  \left(\dot r^2 +\frac{l^2}{r^2}\right)=\frac{1}{2l^2} \left(v^2 +\frac{1}{r_1^2}\right),
\end{aligned}
\end{equation}
in the $(r_1,v,l)$-coordinates.
Moreover, we have the following
differential equations
\begin{equation}\eqlab{l1eqns}
\begin{aligned}
 \dot r_1 &=l^{-3} \left(v+2\delta r_1 l^3\right),\\
 \dot v &= l^{-3} \left(-\frac{r_1-1}{r_1^3}-2\delta vl^3\right),\\
 \dot l &=-\delta l.
 \end{aligned}
\end{equation}
Now, since $l^{-3}$ is a common factor of the right hand side, we make a nonlinear transformation of time that corresponds to multiplication by $l^3$:
\begin{equation}\eqlab{l1eqns2}
\begin{aligned}
 r_1' &=v+2\delta r_1 l^3,\\
 v' &= -\frac{r_1-1}{r_1^3}-2\delta vl^3,\\
 l' &=-\delta l^4.
\end{aligned}
\end{equation}
\eqref{l1eqns} and \eqref{l1eqns2} are equivalent for $l>0$. However, \eqref{l1eqns2} is defined for $l=0$ (it defines an invariant set) and we can therefore use dynamical systems theory to infer properties from $l=0$ to $l>0$ by working on \eqref{l1eqns2}. In turn, this then carries over to the equivalent system \eqref{l1eqns}. 

\begin{remark}\remlab{time}
 In the following we will reserve $\dot{()}$ to denote differentiation with respect to the original time. In comparison, we will use $()'$ repeatedly to refer to differentiation with respect to different times. It should be clear form the context how different $()'$ may be related. 
\end{remark}

Setting $l=0$ in \eqref{l1eqns2} gives
\begin{equation}\eqlab{r1veqns}
\begin{aligned}
 r_1' &= v,\\
 v' &=-\frac{r_1-1}{r_1^3},
\end{aligned}
\end{equation}
which is Hamiltonian with Hamiltonian function:
\begin{align}
 H(r_1,v) = \frac12 v^2+\frac{(r_1-1)^2}{2r_1^2}.\eqlab{H0func}
\end{align}

\begin{remark}\remlab{delta0}
If $\delta=0$, then \eqref{keplerd} reduces to the conservative Kepler problem and the Hamiltonian function \eqref{H0func}, written in terms of the original variables $r,\dot r$ and $l$:
\begin{align*}
 H(l^{-2} r,l\dot r) = \frac12 l^{2} \dot r^2+\frac{(l^2-r)^2}{2r^2},
\end{align*}
is, along with $l$, a conserved quantity of \eqref{rttlt}. In fact, $H=\frac12 \vert \mathcal E\vert^2$ in this case. 
\end{remark}

\begin{lemma}
Consider \eqref{ueqn}, \eqref{leqn} and \eqref{r1veqn}. Then 
\begin{align*}
 \dot u = \frac{v}{l}e^{i\theta}+\frac{il}{r}e^{i\theta},
\end{align*}
and 
\begin{align*}
 e^{-i\theta} \mathcal E(u,\dot u) = \frac{1-r_1}{r_1}-iv,
\end{align*}
so that
\begin{align}
H(r_1,v)=\frac12 \vert \mathcal E(u,\dot u)\vert^2.\eqlab{HvsMathcalE}
\end{align}

\end{lemma}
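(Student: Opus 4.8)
The plan is to prove all three identities by direct computation in the complex-plane coordinate $u = re^{i\theta}$, using only the definitions $v = l\dot r$, $l = r^2\dot\theta$ and $r = l^2 r_1$.

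First I would compute $\dot u$. Differentiating $u = re^{i\theta}$ gives $\dot u = (\dot r + i r\dot\theta)e^{i\theta}$. From $v = l\dot r$ we have $\dot r = v/l$, and from $l = r^2\dot\theta$ we have $r\dot\theta = l/r$; substituting these into the bracket yields $\dot u = \bigl(\tfrac{v}{l} + \tfrac{il}{r}\bigr)e^{i\theta}$, which is the first claim.

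Next I would compute $\dot u\wedge L$. Since the motion is planar, $L = u\wedge\dot u$ is normal to the orbital plane, and the cleanest route is the vector triple-product identity $\dot u\wedge L = \dot u\wedge(u\wedge\dot u) = |\dot u|^2\,u - (u\cdot\dot u)\,\dot u$, which sidesteps any hand conversion of a planar cross product into a complex multiplication. With $u = re^{i\theta}$ one gets $u\cdot\dot u = r\dot r$ and $|\dot u|^2 = \dot r^2 + r^2\dot\theta^2$, so a short simplification gives $\dot u\wedge L = e^{i\theta}\bigl(r^3\dot\theta^2 - i r^2\dot r\dot\theta\bigr)$. Now $r^3\dot\theta^2 = (r^2\dot\theta)^2/r = l^2/r$ and $r^2\dot r\dot\theta = (r^2\dot\theta)\dot r = l\dot r = v$, while $u/|u| = e^{i\theta}$; hence $\mathcal E(u,\dot u) = e^{i\theta}\bigl(l^2/r - 1 - iv\bigr)$. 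Finally $r = l^2 r_1$ gives $l^2/r = 1/r_1$, so $e^{-i\theta}\mathcal E(u,\dot u) = \tfrac{1}{r_1} - 1 - iv = \tfrac{1-r_1}{r_1} - iv$, which is the second claim.

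The third identity is then immediate: multiplication by the unit complex number $e^{-i\theta}$ preserves the modulus, so $|\mathcal E(u,\dot u)|^2 = \bigl|\tfrac{1-r_1}{r_1} - iv\bigr|^2 = \tfrac{(r_1-1)^2}{r_1^2} + v^2 = 2H(r_1,v)$ by the definition \eqref{H0func} of $H$. The only subtlety — hardly an obstacle — is the sign/orientation bookkeeping: one must keep the convention $l = r^2\dot\theta \ge 0$ consistent with the cross products defining $L$ and $\mathcal E$, and using the triple-product identity rather than converting cross products to complex multiplications by hand makes this automatic.
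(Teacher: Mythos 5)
Your computation is correct, and it is exactly the route the paper intends: its proof is simply ``direct calculation,'' and your substitutions $\dot r = v/l$, $r\dot\theta = l/r$, $r = l^2 r_1$ together with the triple-product evaluation of $\dot u\wedge(u\wedge\dot u)$ reproduce the stated identities without issue. The orientation remark is handled consistently with the paper's convention $l=r^2\dot\theta\ge 0$, so nothing is missing.
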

\begin{proof}
 Direct calculation.
\end{proof}

As a corollary, we have that 
\begin{align}
 H_\infty(r_1,v,l):=\lim_{t\rightarrow \infty}  H(\underline r_1(t,r_1,v,l),\underline v(t,r_1,v,l)) = \frac12\vert \mathcal E_\infty(u,\dot u)\vert^2,\eqlab{Hinf}
\end{align}
where we use $(\underline r_1(t,r_1,v,l),\underline v(t,r_1,v,l),\underline l(t,r_1,v,l))$ with
\begin{align*}
\underline z(0,\ldots)=z,
\end{align*}
 $z=r_1,v,$ and $l$, as our notation
for the flow of \eqref{l1eqns2}.

\begin{lemma}\lemmalab{Ham}
$(r_1,v)=(1,0)$ is a center for \eqref{r1veqns}, surrounded by periodic orbits $\Gamma_1(h)$, $h\in \left(0,\frac12\right)$, given by the level sets $H(r_1,v)=h$. $\Gamma_1(h)$  intersects the $r_1$-axis in two points $(r_{1,\pm}(h),0)$ with
\begin{align*}
r_{1,-}(h) = \frac{1}{1+\sqrt{2h}},\quad r_{1,+}(h) = \frac{1}{1-\sqrt{2h}}.
\end{align*}
The unbounded orbit $\Gamma_1\!\left(\frac12\right)$ given by the level set $H(r_1,v)=\frac12$ (corresponding to $\vert \mathcal E_\infty\vert=1$ by \eqref{Hinf}), intersects the $r_1$-axis once in $r_1=r_{1,-}(\frac12) =\frac12$, and is a separatrix, separating the bounded orbits ($H(r_1,v)<\frac12$) from unbounded orbits ($H(r_1,v)>\frac12$).
\end{lemma}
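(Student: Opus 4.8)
The plan is to read off the whole phase portrait of \eqref{r1veqns} from its mechanical structure $r_1'=v$, $v'=-P'(r_1)$ on $\{r_1>0\}$ with potential $P(r_1):=\tfrac{(r_1-1)^2}{2r_1^2}=\tfrac12(1-r_1^{-1})^2$ and first integral $H=\tfrac12 v^2+P(r_1)$ (cf.\ \eqref{H0func}). First I would record that $P'(r_1)=r_1^{-3}(r_1-1)$, so $P$ is strictly decreasing on $(0,1]$ from $+\infty$ (as $r_1\to0^+$) to $P(1)=0$ and strictly increasing on $[1,\infty)$ from $0$ toward $\tfrac12$ (as $r_1\to\infty$, never attained), with $P''(1)=1>0$. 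Hence $(1,0)$ is the unique equilibrium of \eqref{r1veqns}, a nondegenerate minimum of $H$ with $H(1,0)=0$, and $\nabla H=(P'(r_1),v)$ vanishes only there, so every level set $\{H=h\}$, $h>0$, is a smooth embedded curve in $\{r_1>0\}$.

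For $0<h<\tfrac12$ the sublevel set $\{r_1>0:P(r_1)\le h\}$ is the compact interval $[r_{1,-}(h),r_{1,+}(h)]$ whose endpoints solve $\tfrac12(1-r_1^{-1})^2=h$, i.e.\ $r_1^{-1}=1\mp\sqrt{2h}$, which yields the stated formulas (both positive since $\sqrt{2h}<1$, and both $\ne1$). Over this interval $\{H=h\}$ is the union of the two graphs $v=\pm\sqrt{2(h-P(r_1))}$ glued at the endpoints, hence a compact connected smooth curve disjoint from the equilibrium; being a compact connected equilibrium-free invariant $1$-manifold of the planar flow it is a single periodic orbit $\Gamma_1(h)$, and since $\Gamma_1(h)\subset\{r_1\ge r_{1,-}(h)>0\}$ the flow on it is complete. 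Letting $h\to0^+$ the orbits $\Gamma_1(h)$ shrink to $(1,0)$, which is therefore a center.

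For the unbounded regime, $\{P(r_1)\le\tfrac12\}=[\tfrac12,\infty)$ (using $P<\tfrac12$ on $[1,\infty)$ and $P(\tfrac12)=\tfrac12$), so $\Gamma_1(\tfrac12)=\{H=\tfrac12\}$ is the unbounded connected curve $v=\pm\sqrt{1-(1-r_1^{-1})^2}$, $r_1\ge\tfrac12$, which meets the $r_1$-axis only at $r_1=\tfrac12=r_{1,-}(\tfrac12)$ and contains no equilibrium, hence is a single orbit (the second turning point $r_{1,+}(h)\to+\infty$ as $h\to\tfrac12^-$, consistent with unboundedness). For $h>\tfrac12$ one has similarly $\{P(r_1)\le h\}=[r_{1,-}(h),\infty)$, so $\{H=h\}$ is again unbounded. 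Since $H$ is a continuous constant of motion on $\{r_1>0\}$, no orbit crosses $\Gamma_1(\tfrac12)$; moreover $\{H<\tfrac12\}=\{r_1>\tfrac12,\ v^2<1-2P(r_1)\}$ is open, connected, contains $(1,0)$, and each of its orbits is bounded (contained in a compact slab $\{r_{1,-}(h)\le r_1\le r_{1,+}(h)\}$), while the orbits in $\{H>\tfrac12\}$ are unbounded. This is exactly the assertion that $\Gamma_1(\tfrac12)$ is a separatrix between the bounded (periodic) orbits and the unbounded ones.

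I expect the only point requiring more than one-variable calculus to be the passage from ``$\{H=h\}$, $0<h<\tfrac12$, is locally a closed curve'' to ``$\Gamma_1(h)$ is a \emph{single} periodic orbit'': this relies on the connectedness of $\{H=h\}$ (immediate from the interval structure of $\{P\le h\}$) together with the standard classification of a compact connected equilibrium-free invariant $1$-manifold of a planar flow, and one should also note that such orbits never reach the singular line $r_1=0$, which is forced by $P(r_1)\to+\infty$ as $r_1\to0^+$. Everything else reduces to direct computation with $P$.
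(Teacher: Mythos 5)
Your proof is correct, and it is essentially the paper's argument spelled out in full: the paper disposes of this lemma as a ``direct calculation,'' noting only that $(r_1,v)=(1,0)$ is a nondegenerate minimum of $H$ (hence a center, with linearization eigenvalues $\pm i$), while you supply the same level-set/effective-potential analysis explicitly, including the turning points $r_{1,\pm}(h)$ and the separatrix structure at $H=\tfrac12$.
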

\begin{proof}
 Direct calculation. Notice in particular that $(r_1,v)=(1,0)$ is an extremum (minimum) of the Hamiltonian function and therefore also a center of \eqref{r1veqns}. In fact, the linearization around $(r_1,v)=(1,0)$ produces $\pm i$ as the eigenvalues.
\end{proof}
We illustrate the phase portrait of \eqref{r1veqns} in \figref{r1v}. 


\begin{figure}[h!]
 	\begin{center}
 		{\includegraphics[width=.6\textwidth]{./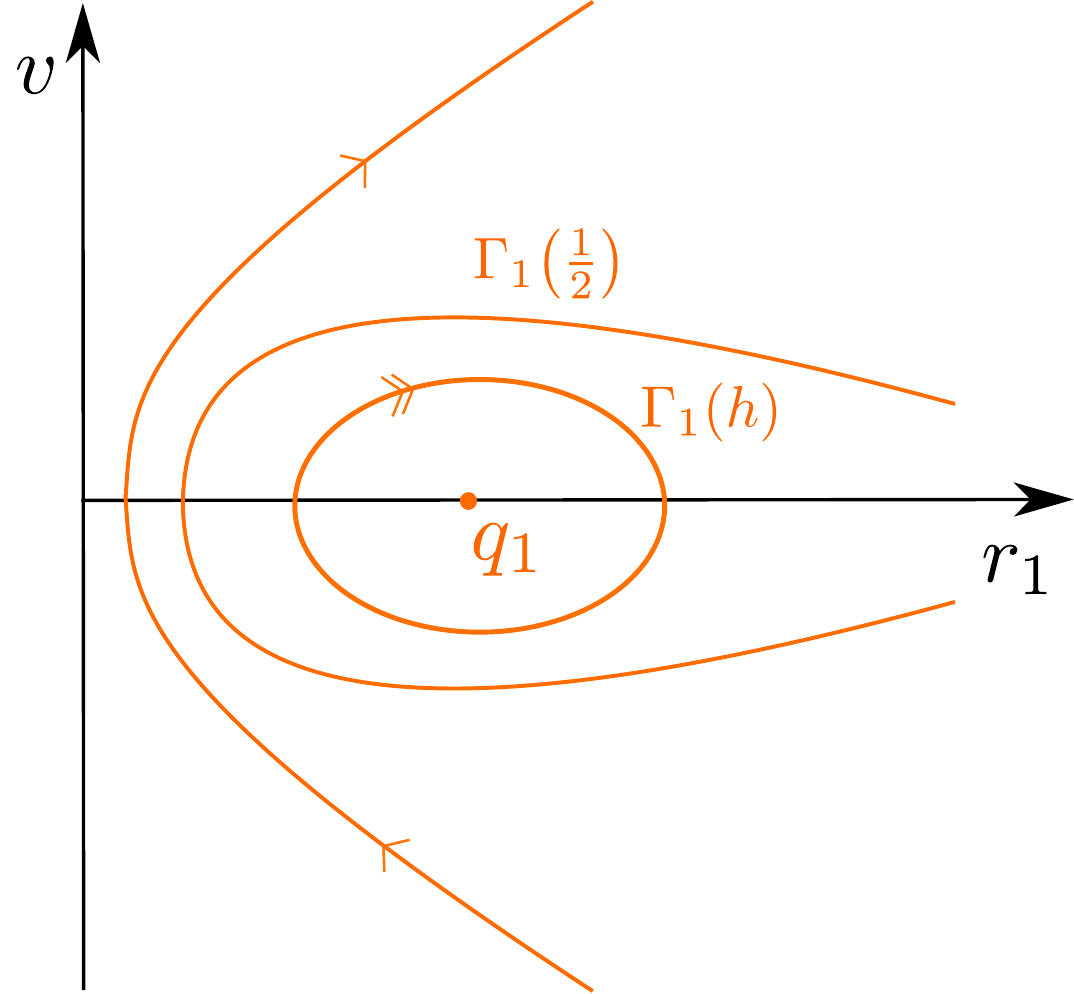}}
 		\caption{Phase portrait of \eqref{r1veqns}. The orbit $\Gamma_1\!\left(\frac12\right)$ defined by $H(r_1,v)=\frac12$ is a separatrix, separating bounded from unbounded orbits.}\figlab{r1v}
 	\end{center}
 \end{figure}


Linearization of \eqref{l1eqns2} at the corresponding equilibrium point $q_1$ of the full system, given by
\begin{align*}
 q_1:\quad (r_1,v,l)=(1,0,0),
\end{align*}
clearly produces eigenvalues $\pm i,0$. It therefore corresponds to a zero-Hopf point \cite{Guckenheimer97}. Similarly, all periodic orbits $\Gamma_1(h)$, $h\in \left(0,\frac12\right)$ are also degenerate when embedded within the full system \eqref{l1eqns2}. The description of the stable sets of these sets of points is therefore complicated by the lack of hyperbolicity. 

\begin{remark}
For $\delta=0$, recall \remref{delta0}, $q_1$ corresponds to circular orbits of the (conservative) Kepler problem with zero eccentricity, whereas $\Gamma_1(h)$, $h\in (0,\frac12)$, correspond to elliptic ones with $\vert \mathcal E\vert \in (0,1)$. Finally, $\Gamma_1\!\left(\frac12\right)$ corresponds to the parabolic orbit whereas $\Gamma_1(h)$, $h>\frac12$ corresponds to hyperbolic ones. 
\end{remark}
\subsection{Main results}
We now state our main results.
\begin{theorem}\thmlab{main1}
\textnormal{(The local stable manifold $W_{loc}^s(q_1)$ of $q_1$.)} The local stable set $W_{loc}^s(q_1)$ is a smooth one-dimensional manifold, taking the following graph form
\begin{align}
r_1 &=1+l^3 F_1(l^3),\quad v=
l^3 G_1(l^3),\quad l\in [0,l_0],\eqlab{manifold1}
\end{align}
for $l_0>0$ small enough and 
where $F_1,G_1:[0,l_0]\rightarrow \mathbb R$ are Gevrey-1 smooth functions. Moreover,
\begin{align}
 \frac{d^{2n}}{dx^{2n}} F_1(0) = 0,\quad \frac{d^{2n+1}}{dx^{2n+1}} G_1(0)=0,\eqlab{FevenGodd}
\end{align}
for all $n\in \mathbb N_0$.

\end{theorem}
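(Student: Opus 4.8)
The plan is to recast the flow of \eqref{l1eqns2} near $q_1$ as a singular analytic ODE of the type treated in \cite{bonckaert2008a}, and then transfer the conclusions back. Along any orbit of \eqref{l1eqns2} with $l>0$, the function $x:=3\delta l^3$ satisfies $x'=-x^2$, hence is strictly decreasing to $0$; using $x$ as the new independent variable and writing $s:=r_1-1$, $y:=(s,v)$, the chain rule turns \eqref{l1eqns2} into
\begin{align}
 x^2\,\frac{\d y}{\d x}=F(x,y),\qquad F(x,y)=\left(-v-\tfrac23(1+s)x,\ \frac{s}{(1+s)^3}+\tfrac23 v x\right),\quad y=(s,v),\eqlab{sing}
\end{align}
which is analytic near $(x,y)=(0,0)$, with $F(0,0)=0$ and $D_yF(0,0)=\left(\begin{smallmatrix}0&-1\\1&0\end{smallmatrix}\right)$ nonsingular with eigenvalues $\pm\i$. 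Note these eigenvalues lie on the imaginary axis, so $q_1$ is entirely nonhyperbolic and $W^s_{loc}(q_1)$ is not accessible through the classical stable manifold theorem; this is exactly the situation that \eqref{sing}, together with the summability theory behind it, is designed to handle.

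Next I would invoke the main technical input from \cite{bonckaert2008a}: equation \eqref{sing} has a unique formal solution $\hat Y(x)=\sum_{k\ge 1}Y_k x^k$, this series is Gevrey-$1$, and — since the eigenvalues $\pm\i$ lie off the positive real (Borel) direction — it is $1$-summable there, with a sum $Y=(Y_s,Y_v)$ that is $C^\infty$ on some $[0,x_0]$ (real-analytic on $(0,x_0)$), satisfies $Y(0)=0$, $Y\sim\hat Y$, and whose graph $\{y=Y(x)\}$ is invariant. The crucial point is that this graph is precisely the \emph{local stable set} of the origin for \eqref{sing}: any small solution $y(x)$ with $y(x)\to 0$ as $x\to 0^+$ must coincide with $Y$. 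I expect this maximality/uniqueness to be the main obstacle of the proof: it cannot come from hyperbolicity (there is none), and must instead be read off from the fixed-point formulation behind \cite{bonckaert2008a}, in which $Y$ is characterized as the unique fixed point of a contraction built from the conditionally convergent variation-of-constants integral $y\mapsto \e^{-B/x}\int_0^x s^{-2}\e^{B/s}\big(F(s,y(s))-By(s)\big)\,\d s$, $B:=D_yF(0,0)$; analyticity and the absence of eigenvalues with positive real part enter exactly here.

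With the reduced problem in hand I would lift back. The map $l\mapsto x=3\delta l^3$ is a smooth, orbit-preserving reparametrization for $l>0$, and $q_1=(1,0,0)$ is an equilibrium of \eqref{l1eqns2}; moreover, by \lemmaref{Ham}, the plane $l=0$ contains only the periodic orbits $\Gamma_1(h)$, none converging to $q_1$. Hence $W^s_{loc}(q_1)$ consists of $q_1$ together with the image of $\{y=Y(x)\}$, i.e.
\begin{align}
 r_1=1+Y_s(3\delta l^3),\qquad v=Y_v(3\delta l^3),\qquad l\in[0,l_0].\eqlab{lift}
\end{align}
Solving \eqref{sing} to leading order (so $BY=(\tfrac23 x,0)^\top+O(x^2)$) gives $Y_s(x)=O(x^2)$ and $Y_v(x)=-\tfrac23 x+O(x^2)$, so $Y_s$ vanishes to order $\ge 2$ and $Y_v$ to order $\ge 1$ at $x=0$. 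Therefore $F_1(w):=Y_s(3\delta w)/w$ and $G_1(w):=Y_v(3\delta w)/w$ are well defined and still Gevrey-$1$ on a small interval $[0,l_0]$ (dividing a Gevrey-$1$ germ, vanishing to the required order, by its argument preserves the Gevrey-$1$ bounds), and \eqref{lift} becomes exactly \eqref{manifold1}. That the set is a smooth embedded one-dimensional submanifold is immediate, since $l\mapsto(1+l^3F_1(l^3),\,l^3G_1(l^3),\,l)$ has derivative with last component identically $1$.

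Finally, \eqref{FevenGodd} follows from a symmetry. The vector field \eqref{l1eqns2} is invariant under the reversing involution $(r_1,v,l)\mapsto(r_1,-v,-l)$ combined with time reversal (check directly, using $(-l)^3=-l^3$ and $(-l)^4=l^4$), and on \eqref{sing} this descends to the symmetry $(x,s,v)\mapsto(-x,s,-v)$ (the sign change in $x$ encodes the time reversal). Consequently $x\mapsto(Y_s(-x),-Y_v(-x))$ is again a solution of \eqref{sing} through the origin, so the uniqueness statement forces $Y_s$ to be even and $Y_v$ to be odd. Hence $F_1(w)=Y_s(3\delta w)/w$ is odd and $G_1(w)=Y_v(3\delta w)/w$ is even, which is precisely $\frac{\d^{2n}}{\d x^{2n}}F_1(0)=0$ and $\frac{\d^{2n+1}}{\d x^{2n+1}}G_1(0)=0$. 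The only genuinely delicate ingredient in the whole argument is, as noted, the identification of the stable \emph{set} with the invariant manifold $Y$ in the complete absence of hyperbolicity; everything else is bookkeeping around the cited Gevrey–summability results.
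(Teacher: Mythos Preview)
Your proposal follows essentially the same route as the paper: reduce \eqref{l1eqns2} to a singular analytic ODE $x^2\,dy/dx=F(x,y)$ with $D_yF(0,0)$ invertible and purely imaginary spectrum, then invoke the Gevrey-1/1-summability theory of \cite{bonckaert2008a} to produce the invariant graph. Two differences are worth flagging. For the parity relations \eqref{FevenGodd}, the paper argues by direct induction on the formal coefficients (see \eqref{evenodd}), whereas your reversing-symmetry argument via $(r_1,v,l,t)\mapsto(r_1,-v,-l,-t)$ is more structural and explains the pattern in one stroke; note, though, that the symmetry only fixes the \emph{formal} series, which is all \eqref{FevenGodd} needs, so your phrasing ``$F_1$ is odd'' should be read at the level of Taylor jets. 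For the identification of the full local stable \emph{set} with the constructed graph --- the point you rightly single out as the obstacle --- the paper does \emph{not} set up a direct fixed-point argument in the conditionally convergent variation-of-constants integral as you sketch; instead it simply observes that no other nearby point can lie in $W^s_{loc}(q_1)$, since by \thmref{main2} every such point belongs to some $W^s(\Gamma_1(h))$, $h\in(0,\tfrac12)$. That forward reference purchases a clean resolution of the uniqueness question at the cost of depending on the subsequent normal-form machinery; your direct route would be self-contained but requires making the contraction on the oscillatory integral precise, which is genuinely more work than you indicate.
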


Upon using \eqref{r1veqn}, we find that $W^s(q_1)$ in \eqref{manifold1} takes the following form
\begin{align*}
 r = l^2 (1+l^3 F_1(l^3)),\quad \dot r  = l^2 G_1(l^3),
\end{align*}
with respect to $r$ and $\dot r$. These quantities are smooth functions of $l$ along this orbit and decay like $e^{-2\delta t}$. Since $F_1(0)=0$, see \eqref{FevenGodd} with $n=0$, we obtain from \eqref{Lt}, \eqref{leqn} and \eqref{r1veqn} that
\begin{align}
\frac{d\theta}{dl} = -\frac{1}{\delta l^4(1+l^3 F_1(l^3)}:= -\frac{1}{\delta l^4} \left(1+l^6 \widetilde F_1(l^3)\right),\eqlab{thiseqn}
\end{align}
and consequently that the sum
\begin{align*}
l^3 \theta(l) +l^3 \int \frac{1}{\delta l^4} dl = l^3 \theta(l)-\frac{1}{3\delta},
\end{align*}
being equal to
\begin{align*}
 -\frac{l^3}{3\delta} \int \widetilde F_1(l^3) dl^3,
\end{align*}
by \eqref{thiseqn},
is a smooth function of $l^3$ along the orbit in \eqref{manifold1}. This result complements results of \cite[Eq. (43)]{margheri2017a} which showed that $\limsup_{t\rightarrow \infty}$ and $\liminf_{t\rightarrow \infty}$ of $\dot \theta l^{-3}$ both exist whenever $\vert \mathcal E_\infty \vert \in (0,1)$.

%

We now turn our attention to the stable sets of  $W^s(\Gamma_1(h))$ with $h\in (0,\frac12)$.
\begin{theorem}\thmlab{main2}
\textnormal{(The local stable manifold of $\Gamma_1(h)$.)}
Fix $h\in \left(0,\frac12\right)$. Then the stable set $W^s(\Gamma_1(h))$ of $\Gamma_1(h)$ is defined by $H_\infty(r_1,v,l)=h$. 
 Moreover, fix any $k\in \mathbb N$. Then there exists a neighboorhood $N(k,h)$ of $\Gamma_1(h)$ such that $W_{loc}^s(\Gamma_1(h))=W^s(\Gamma_1(h))\cap N(k,h)$ is a $C^k$-smooth two-dimensional submanifold. The dependency on $h$ is also $C^k$-smooth.
\end{theorem}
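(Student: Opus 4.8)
The plan is to split the proof into a soft part and a hard part: the \emph{global} identity $W^{s}(\Gamma_1(h))=H_\infty^{-1}(h)$, which follows from \lemmaref{Ham} by a compactness argument, and the \emph{local} smoothness of $H_\infty$ near $\Gamma_1(h)$, which is the real content. For the first part, note that by \lemmaref{Ham} the set $\{(r_1,v):H(r_1,v)\le c,\ r_1>0\}$ is compact whenever $c<\tfrac12$, while $\underline l(t,\cdot)=(l^{-3}+3\delta t)^{-1/3}\to0$ along every orbit of \eqref{l1eqns2}. Hence an orbit with $\lim_{t\to\infty}H(\underline r_1(t,\cdot),\underline v(t,\cdot))=h<\tfrac12$ eventually lies in a fixed compact set and then $\operatorname{dist}((\underline r_1,\underline v,\underline l)(t,\cdot),\Gamma_1(h))\to0$; conversely an orbit approaching $\Gamma_1(h)$ has $H\to h$. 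Together with \eqref{Hinf} this gives $W^{s}(\Gamma_1(h))=H_\infty^{-1}(h)$, and it remains to prove $H_\infty$ is $C^{k}$ in a neighborhood of $\Gamma_1(h)$.

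For this I would pass to coordinates $(\phi,H,l)\in S^{1}\times I\times[0,l_0)$ near $\Gamma_1(h)$, where $I\subset(0,\tfrac12)$ is a compact interval containing $h$ in its interior, $H$ is the Hamiltonian \eqref{H0func}, and $\phi$ parametrizes the closed level curves of \eqref{r1veqns} so that $\phi'=\omega(H)>0$ when $l=0$. This change of variables is analytic on the annulus swept out by the $\Gamma_1(h')$, $h'\in I$ (which stays away from the center $(1,0)$ and from $r_1=0$), and $\omega$ is bounded away from $0$ on $I$. A direct computation gives $H'=2\delta l^{3}\big(\tfrac{r_1-1}{r_1^{2}}-v^{2}\big)$, and since $\tfrac{r_1-1}{r_1^{2}}-v^{2}=-\tfrac{d}{dt}(vr_1)$ along \eqref{r1veqns} this has zero average over each periodic orbit $\Gamma_1(h')$; thus \eqref{l1eqns2} takes the form $\phi'=\omega(H)+l^{3}a(\phi,H)$, $H'=l^{3}b(\phi,H)$, $l'=-\delta l^{4}$, with $a,b$ analytic, $2\pi$-periodic in $\phi$, and $b$ of zero $\phi$-average. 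Because $\omega$ has no zeros on $I$, the homological equations carry no small divisors, so I would run a finite averaging/normal-form procedure in powers of $l$: after $M$ near-identity analytic transformations $H\mapsto\tilde H=H+O(l^{3})$ one reaches $\tilde H'=c(\tilde H,l)+l^{N}d(\phi,\tilde H,l)$ with $c=O(l^{6})$ \emph{$\phi$-independent} and $N=N(M)$ arbitrarily large; analyticity of \eqref{l1eqns2} in this region guarantees no loss of regularity.

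The key estimate is then a competition of scales. From $\underline l(t,\cdot)=l_0(1+3\delta tl_0^{3})^{-1/3}=O(t^{-1/3})$ one gets $\int_0^\infty \underline l^{\,m}\,dt=O(l_0^{m-3})$ for $m>3$. In the normalized variables the $l$-component is independent of $(\phi_0,\tilde H_0)$; the $\tilde H$-component has derivatives in all initial data that stay bounded in $t$ (its right-hand side being integrably small); and the $\phi$-component acquires at most polynomial-in-$t$ growth in its derivatives, coming only from $\omega'(\tilde H)$. Differentiating
\begin{align*}
\tilde H_\infty=\tilde H_0+\int_0^\infty\Big(c(\tilde H(t),l(t))+l(t)^{N}d(\phi(t),\tilde H(t),l(t))\Big)\,dt
\end{align*}
under the integral sign, each derivative of the integrand of order $\le k$ is dominated by an integrable function of $t$ once $N$ is large enough, so $\tilde H_\infty$ is $C^{k}$ in $(\phi_0,\tilde H_0,l_0)$, and $\partial_{\tilde H_0}\tilde H_\infty=1+O(l_0^{3})\ne0$ for $l_0$ small. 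The implicit function theorem applied to $\tilde H_\infty=h$ — in which $h$ appears linearly — then exhibits $W^{s}_{loc}(\Gamma_1(h))$ as a two-dimensional $C^{k}$ graph $\tilde H_0=\Psi(\phi_0,l_0,h)$ with $C^{k}$ dependence on $h$, and pulling this back through the analytic coordinate changes gives the statement in the $(r_1,v,l)$ variables.

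I expect the main obstacle to be precisely this last estimate: controlling, uniformly on the neighborhood, the derivatives of the flow of the normalized system against the $l^{N}$-smallness of the drift — in other words, showing that the infinitely many revolutions the orbits make about $\Gamma_1(h)$ do not destroy smoothness of the landing value $H_\infty$. The delicate contributions are the high-order $l_0$-derivatives, for which the explicit scaling $l(t)=l_0(1+3\delta tl_0^{3})^{-1/3}$ must be exploited to see that the relevant integrals still converge. It is exactly this balance that must break down as $h\to\tfrac12$, where the period of $\Gamma_1(h)$ diverges, $\omega\to0$, and small divisors reappear — consistent with the value $\tfrac12$ being handled separately via blowup in \secref{blowup}.
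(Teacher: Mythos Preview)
Your proposal is correct and follows essentially the same strategy as the paper: action-angle coordinates near $\Gamma_1(h)$, an iterative averaging lemma pushing the angle-dependence of the $H$-equation to order $l^N$, flow-derivative estimates exploiting the explicit decay $l\sim t^{-1/3}$ against the $l^N$-smallness, and the implicit function theorem applied to $\tilde H_\infty=h$. The paper's execution differs only in bookkeeping: it first replaces $H$ by $H_1=H+2\delta r_1 v\,l^3+3\delta^2 r_1^2 l^6$ (so that $H_1'=-6\delta^3 r_1^2 l^9$ directly, bypassing your zero-average observation), reparametrizes time so that $x=l^3$ satisfies $x'=-1$, and carries out the crucial flow estimates you correctly flag as the main obstacle via an explicit induction on the multi-index order in an appendix (\lemmaref{est0}).
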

Although the domain $N(k,h)$ depends upon $k$ and $h$, the flow is regular away from $l=0$ and consequently, we can globalize the local manifolds by application of the backward flow. Since the system is real analytic, it follows that these global stable manifolds are in fact $C^k$ also. Similarly, if we fix a compact interval $I\subset \left(0,\frac12\right)$ then we have a uniform description of all local manifolds within $N(k):=\cap_{h\in I} N(k,h)$ and by working on this set, our approach also shows that the stable manifolds are also $C^k$. In turn, seeing that $k\in \mathbb N$ is arbitrary, we obtain the following:
\begin{theorem}\thmlab{main3}
The global stable manifolds $W^s(q_1)$ and $W^s(\Gamma_1(h))$ of $q_1:\,(r_1,v,l)=(1,0,0)$ and $\Gamma_1(h)$, $h\in \left(0,\frac12\right)$, respectively, are each $C^\infty$. The dependency of $W^s(\Gamma_1(h))$ on $h$ is also $C^\infty$. 
\end{theorem}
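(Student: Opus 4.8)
The plan is to obtain \thmref{main3} as a consequence of \thmref{main1} and \thmref{main2}, together with the observation that the reduced system \eqref{l1eqns2} is real-analytic on $\{r_1\neq 0\}$ — in particular on the region $\{l>0\}$ that contains the non-trivial part of each stable set. The global manifolds are obtained by saturating the local ones under the backward flow $\Phi_{-t}$ of \eqref{l1eqns2} (denoted $\underline z(\cdot,\cdot)$ above); since the vector field is analytic on $\{r_1\neq 0\}$, these backward flow maps are analytic local diffeomorphisms and transport the finite smoothness of the local pieces with no loss, so that the $C^\infty$ assertion is read off from the fact that the local statements are available at every finite order of differentiability. Two small points must be checked to make this work: that the stable sets meet $\{l=0\}$ only in the limiting invariant set (so that the potentially non-smooth part lives where the flow is analytic), and that the relevant orbits stay in $\{r_1>0\}$.

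For $W^s(q_1)$: \thmref{main1} already gives that $W^s_{loc}(q_1)$ is Gevrey-$1$, hence $C^\infty$ on a fixed neighborhood of $q_1$, by \eqref{manifold1}–\eqref{FevenGodd}. The plane $\{l=0\}$ is invariant for \eqref{l1eqns2} and carries the Hamiltonian dynamics \eqref{r1veqns}, for which $q_1=(1,0)$ is a center (\lemmaref{Ham}); hence no orbit other than $q_1$ converges to $q_1$ inside $\{l=0\}$, so $W^s(q_1)\cap\{l=0\}=\{q_1\}$ and $W^s(q_1)\setminus\{q_1\}\subset\{l>0\}$. Every point of $W^s(q_1)$ has forward orbit converging to $q_1$, hence eventually enters the fixed neighborhood on which \thmref{main1} describes $W^s_{loc}(q_1)$, and along such an orbit $r_1>0$ (otherwise the forward orbit could not converge to $q_1$); therefore $W^s(q_1)=\bigcup_{t\ge 0}\Phi_{-t}\bigl(W^s_{loc}(q_1)\bigr)$. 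Since each $\Phi_{-t}$ is analytic on $\{r_1\neq 0\}$, $W^s(q_1)$ is $C^\infty$, and in fact $C^\omega$ away from $q_1$.

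For $W^s(\Gamma_1(h))$, fix $k\in\mathbb N$. By \thmref{main2}, $W^s_{loc}(\Gamma_1(h))=W^s(\Gamma_1(h))\cap N(k,h)$ is a $C^k$ two-dimensional submanifold, and by the remark preceding \thmref{main3} one has, for any compact interval $I\subset(0,\tfrac12)$, a description on $N(k)=\bigcap_{h\in I}N(k,h)$ that is jointly $C^k$ in $h$ and the two leaf coordinates. As in the previous paragraph, conservation of $H$ along \eqref{r1veqns} forces $W^s(\Gamma_1(h))\cap\{l=0\}=\Gamma_1(h)$ (an orbit converging to the level set $\{H=h\}=\Gamma_1(h)$ lies on that level set), so $W^s(\Gamma_1(h))\setminus\Gamma_1(h)\subset\{l>0\}$; and every point of $W^s(\Gamma_1(h))$ flows forward into $N(k,h)$, with $r_1>0$ along the orbit, so $W^s(\Gamma_1(h))=\bigcup_{t\ge 0}\Phi_{-t}\bigl(W^s_{loc}(\Gamma_1(h))\bigr)$. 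Transporting the local piece by the analytic maps $\Phi_{-t}$ shows $W^s(\Gamma_1(h))$ is $C^k$; since the flow of \eqref{l1eqns2} is jointly analytic in time and initial data on $\{r_1\neq 0\}$, the joint $C^k$ description over $N(k)$ is transported as well, giving a $C^k$ family $h\mapsto W^s(\Gamma_1(h))$ over $I$. As $k\in\mathbb N$ and $I\Subset(0,\tfrac12)$ are arbitrary, $W^s(\Gamma_1(h))$ and its dependence on $h$ are $C^\infty$.

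The argument is thus essentially bookkeeping once \thmref{main1} and \thmref{main2} are available; the only delicate points are (i) the identification $W^s(q_1)\cap\{l=0\}=\{q_1\}$ and $W^s(\Gamma_1(h))\cap\{l=0\}=\Gamma_1(h)$, which confines the genuinely nonhyperbolic part of each manifold to $\{l>0\}$ where the vector field is analytic, and (ii) the fact that the neighborhoods $N(k,h)$ in \thmref{main2} shrink as $k\to\infty$, so that for $W^s(\Gamma_1(h))$ the conclusion ``$C^\infty$'' is to be read as ``$C^k$ for every $k$'' — whereas $W^s(q_1)$ is genuinely $C^\infty$ on a fixed neighborhood, thanks to the Gevrey-$1$ bound of \thmref{main1}.
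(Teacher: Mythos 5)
Your proposal is correct and follows essentially the same route as the paper: globalize $W^s_{loc}(q_1)$ and $W^s_{loc}(\Gamma_1(h))$ from \thmref{main1} and \thmref{main2} by saturating under the backward flow, use the (real-analytic) regularity of the flow away from the degenerate set to transport the $C^k$ (resp.\ Gevrey-1) local descriptions without loss, invoke the uniform description over compact $h$-intervals for the $C^k$ dependence on $h$, and let $k\to\infty$. The extra bookkeeping you supply (that the stable sets meet $\{l=0\}$ only in $\{q_1\}$ resp.\ $\Gamma_1(h)$, and that orbits stay in $\{r_1>0\}$) is consistent with, and only slightly more detailed than, the paper's argument.
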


Due to \eqref{Hinf}, we have obtained a complete description of the smoothness of $\vert \mathcal E_\infty(u,\dot u)\vert\in(0,1)$ which was left open in \cite{margheri2017a}.  It seems likely that the global stable manifolds of $\Gamma_1(h)$ are also Gevrey-1 (as the stable manifold $W^s(q_1)$ of $q_1:\,(r_1,v,l)=(1,0,0)$), but this would require better normal forms. \cite{bittmann2018a} considers such normal forms but the condition regarding the trace is violated in the present context. 

Finally, we remark that the existence of the invariant set $W^s(q_1)$ was proven more indirectly in \cite{margheri2017a} using degree theory. Although this approach does not address the smoothness, the result in \cite{margheri2017ab} holds true for a more general class of dissipations defined by $\delta=\delta(\vert u\vert)$, with $\delta(\vert u\vert)\ge c>0$. However, our results can be extended to this general case also. We leave this to the interested reader. 
\section{Proof of \thmref{main1}}\seclab{main1}
To prove \thmref{main1}, we consider \eqref{l1eqns2} 
%
%
in terms of 
\begin{align}\eqlab{xeqn}
x=l^3.
\end{align} This gives
\begin{equation}\eqlab{r1vxeqns0}
\begin{aligned}
 r_1' &=v+2\delta r_1 x,\\
 v'  &= -\frac{r_1-1}{r_1^3}-2\delta vx,\\
 x' &=-3 \delta x^2,
\end{aligned}
\end{equation}
or
\begin{equation}\eqlab{r1vxeqns}
\begin{aligned}
 x^2 \frac{dr_1}{dx} &=\frac{1}{3\delta}\left(-v-2\delta r_1 x\right),\\
 x^2 \frac{dv}{dx} &= \frac{1}{3\delta}\left(\frac{r_1-1}{r_1^3}+2\delta vx\right),
\end{aligned}
\end{equation}
upon eliminating time. To zoom in on $q_1$ at $r_1=1,v=0$, $x=0$, we perform a blowup transformation:
\begin{align}
(x,r_{11},v_{11})\mapsto \begin{cases}
 r_1&=1+ xr_{11},\\
 v&=x(v_{11}-2\delta),\end{cases}\eqlab{v11}
\end{align}
leaving $x$ fixed.
This gives the final system:
\begin{align}\eqlab{yeqn}
 x^2 \frac{d y}{dx}
 &= A y+  f(x,y),
\end{align}
setting $y=(r_{11},v_{11})$ and where
\begin{align*}
 A &= \begin{pmatrix}
      0 & -\frac{1}{3\delta}\\
      \frac{1}{3\delta} & 0
     \end{pmatrix},\\
     f(x,r_{11},v_{11}) &= \frac13 x\begin{pmatrix} -5 \delta^{-1} r_{11}\\
  2\delta-v_{11}-\delta^{-1} r_{11}^2\left(3+3x r_{11}+x^2r_{11}^2\right){(1+xr_{11})^{-3}}
 \end{pmatrix}.
\end{align*}
The eigenvalues of $A$ are $\pm \frac{i}{\sqrt{3\delta}}$ and $f$ is real analytic. It is standard, that there exists a formal series solution 
\begin{align}
 y(x) = \widehat Y(x):=\sum_{x=1}^\infty Y_n x^n,\eqlab{Yhat}
\end{align}
of \eqref{yeqn}, see e.g. \cite{bonckaert2008a} and also \cite{haraux2021a} for the details of the expansion in the present case. In particular, 
\begin{align*}
 Y_1 = \begin{pmatrix}
        -2\delta^2 \\
        0
       \end{pmatrix},\quad 
Y_2 = \begin{pmatrix}
        0 \\
        16\delta^3
       \end{pmatrix},
\end{align*} and by induction on $n$
\begin{align}
 Y_{2n-1} &= \begin{pmatrix}
        *\\
        0
       \end{pmatrix},\quad 
Y_{2n} = \begin{pmatrix}
        0\\
        *
       \end{pmatrix},\eqlab{evenodd}
\end{align}
for all $n\in \mathbb N$. Here $*$ is an unspecified quantity that depends upon $n$ and $\delta$. 

 Let $S(\phi,r)\subset \mathbb C$ be the open sector region in $\mathbb C$ centered along the positive real axis, with radius $r$ and opening $\phi$:
\begin{align*}
 S(\phi,r) = \{x\in \mathbb C\,:\,\vert \text{arg}(x)\vert < \phi/2,0<\vert x\vert <r\}.
\end{align*}
Let $\overline S(\phi,r)$ denote its closure. 
The following result shows that the series \eqref{Yhat} is Gevrey-1 and \textit{1-summable} \cite{balser1994a}:
\begin{proposition}\proplab{Yexistence}
There exists $x_0>0$, $\phi>0$ both sufficiently small and a Gevrey-1 function $Y:\overline S(\phi+\pi,r)\rightarrow \mathbb C$ with $Y(0)=0$, which is real analytic on $S(\phi+\pi,r)$, such that the graph
\begin{align*}
 y = Y(x),\quad x\in S(\phi+\pi,r)
\end{align*}
solves \eqref{yeqn} with $Y(0)=0$. Here $Y$ has $\widehat Y$, see \eqref{Yhat}, as a Gevrey-1 asymptotic series, such that $Y^{(n)}(0)=n! Y_n$
 \end{proposition}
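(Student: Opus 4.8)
The plan is to read \eqref{yeqn} as a rank-one irregular singular system $x^2y'=Ay+f(x,y)$ in which $A$ is \emph{invertible} (its eigenvalues being $\pm i/\sqrt{3\delta}\neq0$) and $f$ is real analytic with $f(0,0)=0$, and to invoke the Borel--Laplace summability theory for such equations; \cite{bonckaert2008a} develops exactly this (see also the classical account \cite{balser1994a}). So the task splits into: (i) the unique formal solution \eqref{Yhat} exists; (ii) it is Gevrey-$1$; (iii) it is $1$-summable in the direction $\arg x=0$; and (iv) the resulting sum $Y$ has the stated parity, reality, and asymptotic properties. Since the Borel transform $\mathcal B$ and the Laplace transform are inverse to each other at the formal level, \propref{Yexistence} is in effect the statement that the formal series of \cite{haraux2021a} is Borel--Laplace summable.

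First, for the formal solution: substituting $y=\widehat Y(x)=\sum_{n\ge1}Y_nx^n$ into \eqref{yeqn} and comparing coefficients of $x^n$ yields $AY_n=(n-1)Y_{n-1}-(\text{a polynomial in }Y_1,\dots,Y_{n-1}\text{ from }f)$; since $A$ is invertible this determines $Y_n$ uniquely for every $n\ge1$, with $Y_0=0$, giving \eqref{Yhat}. For the parity \eqref{evenodd} I would observe that the change of variables $(x,r_{11},v_{11})\mapsto(-x,-r_{11},v_{11})$ maps solution graphs of \eqref{yeqn} to solution graphs: the linear part $Ay$ and both components of $f$ transform consistently, because the first component of $f$ equals $-\tfrac{5}{3\delta}xr_{11}$ (invariant under the substitution) while the second component carries the explicit factor $x$ and depends on $r_{11}$ only through $r_{11}^{2}$ and $xr_{11}$. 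Hence $\widehat Y$ is fixed by this symmetry, i.e. its $r_{11}$-component is odd and its $v_{11}$-component even in $x$, which is \eqref{evenodd} --- and, via \eqref{v11}, the parities \eqref{FevenGodd} claimed in \thmref{main1}.

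Next, the Gevrey-$1$ bound $\lVert Y_n\rVert\le CK^nn!$ is obtained by the majorant method: dominate $f$ by a scalar series $\tfrac{cx}{1-(x+\lVert y\rVert)/\rho}$, solve the associated scalar majorant recursion, and check its coefficients are $O(K^nn!)$. Then the Borel transform $\varphi(\xi)=\sum_{n\ge1}\tfrac{Y_n}{(n-1)!}\xi^{n-1}$ converges near $\xi=0$ and solves the convolution equation obtained by Borel-transforming \eqref{yeqn}; running a contraction/majorant argument in spaces of functions holomorphic and of exponential type on sectors $\{|\arg\xi-d|<\eta\}$ continues $\varphi$ analytically, with exponential bounds, along every ray $\arg\xi=d$ with $d\notin\{\pm\pi/2\}$. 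The forbidden directions $\pm\pi/2$ are precisely the arguments of the eigenvalues $\pm i/\sqrt{3\delta}$ of $A$, i.e. the locations where $\xi\,\id-A$ is singular and $\varphi$ may itself be singular (as already the model $x^2y'=\lambda y$ shows, whose Borel transform is proportional to $(\lambda-\xi)^{-1}$). Since $\arg x=0$ is then a non-Stokes direction, I set $Y(x)=\int_0^{\infty}\varphi(\xi)e^{-\xi/x}\,d\xi$; this converges and is holomorphic for $\real(1/x)>\tau$, and slightly tilting the integration ray over directions $|d|<\pi/2$ (legitimate by Cauchy, no singularity being crossed) extends $Y$ to a sector $\overline S(\phi+\pi,r)$ with $\phi,r>0$ small. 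Because the Borel transform intertwines \eqref{yeqn} with the convolution equation solved by $\varphi$, this $Y$ solves \eqref{yeqn}; a Watson-type estimate shows $\widehat Y$ is its Gevrey-$1$ asymptotic expansion on the closed sector, so $Y^{(n)}(0)=n!\,Y_n$ and $Y(0)=0$; and since all data are real, $\varphi$ has real Taylor coefficients and is real on $\R_{>0}$, so $Y$ is real on $\R_{>0}$ and real analytic on the open sector.

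The main obstacle will be step (iii): the analytic continuation of the Borel transform along all non-Stokes rays together with the uniform exponential estimates --- this is the genuine content of $1$-summability, whereas the formal recursion, the Gevrey majorant bound, and the Laplace/Watson step are routine. If one prefers to quote the summability theorem of \cite{bonckaert2008a} directly, then the only hypothesis to check is $\det A\neq0$ (immediate here), plus singling out $\arg x=0$ as non-Stokes; even then, the parity \eqref{evenodd} and the reality of $Y$ still have to be argued by the symmetry and real-coefficient observations above.
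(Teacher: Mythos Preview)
Your proposal is correct and follows essentially the same Borel--Laplace route as the paper: transform \eqref{yeqn} to the Borel plane, solve the resulting convolution equation by a contraction argument in a weighted space of exponentially bounded analytic functions on a sector avoiding the Stokes directions $\pm\pi/2$ (exploiting that $[uI-A]^{-1}$ is uniformly bounded there because the eigenvalues $\pm i/\sqrt{3\delta}$ are purely imaginary), and Laplace-transform back. The paper spells out the fixed-point step concretely with the norm $\Vert\alpha\Vert_\zeta=\sup_{u\in\Delta}\vert\alpha(u)\vert(1+\zeta^2\vert u\vert^2)e^{-\zeta\vert u\vert}$ from \cite{bonckaert2008a}, while you describe it at a slightly higher level, but the content is the same; your symmetry argument for \eqref{evenodd} via $(x,r_{11},v_{11})\mapsto(-x,-r_{11},v_{11})$ is a clean alternative to the paper's induction on $n$.
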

 \begin{proof}
  The result follows from \cite[Theorem 3]{bonckaert2008a}; although this result does not address the existence of an invariant manifold directly (instead \cite{bonckaert2008a} proves existence of a certain normal form), this can be obtained as a corollary, using the invariance of the set $y=0$ of \cite[Eq. (8)]{bonckaert2008a}. For completeness, we include a version of the proof that only addresses the existence of $y=Y(x)$, see also \cite[App. A]{kriszm1}, which includes a similar proof.
  
  Following
  \cite{bonckaert2008a} we proceed by first (a) transforming \eqref{r1vxeqns} into an equation on the ``Borel-plane'' (through the Borel-transform $\mathcal B$), then (b) apply a fixed-point argument there and finally (c) obtain our desired solution by applying the Laplace transform. 

For our purposes, the Borel transform is defined in the following way: If $h(x)=\sum_{n=1}^\infty h_n x^n$ is a Gevrey-1 formal series: 
\begin{align}
\vert h_n\vert \le a b^n n!,\eqlab{hncond}
\end{align}
then the Borel transform of $h$ is given by
\begin{align*}
 \mathcal B(h)(u) = \sum_{n=0}^\infty \frac{h_{n+1}}{n!} u^n.
\end{align*}
Clearly, $\mathcal B(h)$ is analytic on $\vert u\vert <b^{-1}$ if \eqref{hncond} holds true. For the Laplace transform, on the other hand, we need analytic functions $\alpha(u)$ that are at most exponentially growing $\vert \alpha(u)\vert \le \mathcal O(1) e^{\zeta\vert u\vert}$, $\zeta>0$, in an infinite sector. With this in mind, let $S(\phi)\subset \mathbb C$ be the (infinite) sector centered along the positive real $x$-axis with opening $\phi\in (0,\pi)$, and let $B(R)$ be the open ball of radius $R$ centered at $0$. Finally, set 
\begin{align*}
 \Delta := S(\phi)\cup B(R).
\end{align*}
Then for any $\zeta>0$,
we define the norm 
\begin{align}
\Vert \alpha\Vert_\zeta:=\sup_{u\in \Delta} \left\{\vert \alpha(u)\vert (1+\zeta^2 \vert u\vert^2)e^{-\zeta \vert u\vert}\right\},\eqlab{normG}
\end{align}
see \cite{bonckaert2008a}, 
on the space of analytic  functions on $\Delta$:
\begin{align*}
  \mathcal G:=\{\alpha:\alpha \mbox{  is analytic on  } \Delta \mbox{ and } \Vert \alpha\Vert_\zeta<\infty\}.
\end{align*}
The normed space $(\mathcal G,\Vert \cdot\Vert_\zeta)$ is a complete space. We will need $\zeta$ sufficiently large in the following.
The factor $1+\zeta^2 \vert w\vert^2$ in the norm $\Vert \cdot \Vert_{\zeta}$ ensures that the convolution:
 \begin{align*}
  (\alpha \star \beta)(u) = \int_0^u \alpha(s) \beta(u-s)ds,
 \end{align*}
 is continuous as a bilinear operator on $\mathcal G$. In particular, we have
\begin{align*}
 \Vert \alpha\star \beta \Vert_\zeta \le \frac{4\pi}{\zeta}\Vert \alpha\Vert_\zeta \Vert \beta\Vert_\zeta,
\end{align*}
 see \cite[Proposition 4]{bonckaert2008a}.
The Laplace transform (along the positive real axis)
\begin{align*}
 \mathcal L(\alpha)(x) :=\int_0^\infty \alpha(u) e^{-u/x} du,
\end{align*}
is then well-defined for any $\alpha\in \mathcal G$. In fact, we have the following result. 
\begin{lemma}\cite[Proposition 3]{bonckaert2008a}\lemmalab{A1}
 The Laplace transform defines a linear continuous mapping, with operator norm $\Vert \mathcal L\Vert\le 1$, from $\mathcal G$ to the set of analytic functions on a local sector $S(\pi+\phi,R_0)=S(\pi+\phi)\cap B(R_0)$ for $\phi\in (0,\pi)$ and $R_0>0$ sufficiently small. Moreover, 
 \begin{align}
  \mathcal L(\alpha \star \beta)(x) &= \mathcal L(\alpha)(x)\mathcal L(\beta)(x),\nonumber
  \end{align}
  and
  \begin{align}
  x^2\frac{d}{dx}\mathcal L(\alpha)(x)&=\mathcal L(u\alpha)(x),\eqlab{x2Lx}
 \end{align}
 with $u\alpha$ being the function $u\mapsto u\alpha(u)$, 
for every $\alpha,\beta\in \mathcal G$.
\end{lemma}

Following \eqref{x2Lx}, we are now led to write the left hand side of \eqref{yeqn} with $y=Y(x)$ as $[u I -A]\Phi(u)$ with $\Phi = \mathcal B(Y)$. 
To set up the associated right hand side, we need to deal with the nonlinearity $f(x,Y(x))$. This is described in \cite[Proposition 5]{bonckaert2008a}:
\begin{lemma}
Write $f$ as the convergent series $f(x,y)=\sum_{n=1}^\infty f_{n}(x) y^n$, $f_n(x):=\sum_{m=1}^\infty f_{mn}x^m$ and let $F_n$ be the Borel transform of $f_n$. Then $F_n\in \mathcal G$ for each $n$. Fix $C_0>0$ and for large values of $\zeta$, recall \eqref{normG}, consider $\alpha\in \mathcal G$ with $\Vert \alpha\Vert_\zeta\le C_0$. Then $\alpha\mapsto f^*(\alpha)$ defined by
\begin{align*}
f^*(\alpha)(u):=\sum_{n=1}^\infty F_n(u) \star \alpha(u)^{\star n},
\end{align*}
which converges in $\mathcal G$, is differentiable and satisfies the following estimates
\begin{align}
 \Vert f^*(\alpha)\Vert_{\zeta}\le C_1,\quad 
 \Vert D(f^*)(\alpha)\Vert_{\zeta}\le \zeta^{-1} C_1,\eqlab{fstarest}
\end{align}
for some constant $C_1>0$ depending only on $f$ and $C_0$. 
Moreover,
\begin{align}
 \mathcal L(f^*(\alpha))(x) = f(x,\mathcal L(\alpha)(x)).\eqlab{Lf}
\end{align}

\end{lemma}
Following \eqref{Lf}, we are therefore finally led to consider 
\begin{align}
 \Phi(u) = [u I -A]^{-1} f^*(\Phi)(u),\eqlab{borelEq}
\end{align}
where $\Phi$ is the Borel transform of $Y$. The equation \eqref{borelEq} has the form of a fixed point equation. Since the eigenvalues of $A$ are imaginary, if we take $\phi\in (0,\pi)$ then $[uI-A]^{-1}$ is uniformly bounded on $S(\phi)$. 
%
Using \eqref{fstarest}, it therefore follows that there is some $M>0$, depending on $f$, $\phi$ and $R$, such that the right hand side of \eqref{borelEq} defines a contraction on the subset of $\mathcal G$ with $\Vert \cdot \Vert_\zeta\le M$ for $\zeta>0$ large enough. Consequently, by Banach's fixed point theorem there is a unique solution $\Phi\in \mathcal G, \Vert \Phi \Vert_\zeta\le M$, solving \eqref{borelEq}. By applying the Laplace transform, we obtain the desired solution
\begin{align*}
 Y(x) := \mathcal L(\Phi)(x),
\end{align*}
 of \eqref{yeqn},
using \eqref{x2Lx} and \eqref{Lf}. The function $Y$ is defined on the domain $S(\pi+\phi,R_0)=S(\pi+\phi)\cap B(R_0)$ and has the properties specified by \lemmaref{A1}. Finally, we emphasize that the Borel transform is real when the argument is. Consequently, \eqref{borelEq} is real when $\Phi$ is real. Since the Laplace transformation is real upon integrating along the positive real axis, $Y=\mathcal L(\Phi)$ is real analytic on $S(\pi+\theta,R_0)$ as claimed.
 \end{proof}
Upon transforming the manifold in \propref{Yexistence} back to the $(r_1,v,l)$-coordinates, using \eqref{xeqn} and
\begin{align*}
 \begin{pmatrix}
  F_1(l^3)\\
  G_1(l^3)+2\delta
 \end{pmatrix} := Y(l^3),
\end{align*}
cf. \eqref{v11}, 
we obtain \eqref{manifold1}. \eqref{FevenGodd} then also follows from \eqref{evenodd}. $W_{loc}^s(q_1)$ cannot contain points not in \eqref{manifold1}; this follows from \thmref{main2}, which we prove in the following section. This completes the proof of \thmref{main1}.
 

\section{Proof of \thmref{main2}}\seclab{main2}
To prove \thmref{main2}, we will proceed in three steps: First we introduce appropriate (action-angle) coordinates to parameterize $H(r_1,v)=h$, see \secref{41}. Subsequently in \secref{42}, we bring our system into a normal form (based upon averaging). Finally in \secref{43}, we use the normal form to set up an equation for the invariant manifold $W^s(\Gamma_1(h))$, which we solve using the implicit function theorem. Essentially, our approach is reminiscint of a flow-box argument; we will show that there are smooth coordinates $(h,\phi,l)$, $\phi\in \mathbb T=\mathbb R/(2\pi \mathbb Z)$, with $h'=0$. 

\subsection{Action-angle coordinates}\seclab{41}
We first consider the planar Hamiltonian system \eqref{r1veqns}, repeated here for convinience:
\begin{equation}\eqlab{r1veqns2}
\begin{aligned}
 r_1' &= v,\\
 v' &=-\frac{r_1-1}{r_1^3},
\end{aligned}
\end{equation}
with Hamiltonian function:
\begin{align*}
 H(r_1,v) = \frac12 v^2 +\frac{(r_1-1)^2}{2r_1^2}.
\end{align*}
The set $H^{-1}(I)$ with $I := \left(0,\frac12\right)$ is filled with periodic orbits centered around the point $(r_1,v)=(1,0)$. It is standard, see e.g. \cite{meyer2009a}, that there exists a real-analytic symplectic diffeomorphism on this set
\begin{align*}
 \Psi:\,(r_1,v)\mapsto (A,\phi_1)\in (0,J_0)\times \mathbb T,\quad \mathbb T:=\mathbb R/(2\pi \mathbb Z),
\end{align*}
with action-angle coordinates $(A,\phi_1)$, such that \eqref{r1veqns2} becomes
\begin{align}
 A' &=0,\nonumber \\
 \phi_1' &=\Omega'(A).\eqlab{phidot}
\end{align}
Here $\Omega(A)=H\circ \Psi^{-1}(A,\phi_1)$ is the Hamiltonian function expressed in the new coordinates;  the important point is obviously that $\Omega$ is independent of $\phi_1$. Consider $\Gamma_1(h)=\{(r_1,v)\,:\,H(r_1,v)=h\}$, $h\in I$,  and write $\Psi =(\Psi_A,\Psi_\phi)$. Then $A=\Psi_A(r_1,v)$ is just the area of the oval $\{(r_1,v)\,:\,H(r_1,v)\le h\}$, whereas $\phi_1=\Psi_\phi(r_1,v)\in \mathbb T$ is so that $dr_1\wedge dv=d\phi_1 \wedge dA$.
By \eqref{phidot}, we see that the angle $\phi_1$ is a scaling of time ($\operatorname{mod}2\pi$) such that each $\Gamma_1(h)$ is $2\pi$-periodic. Consequently, if $p(h)$ is the period of $\Gamma_1(h)$ then 
\begin{align}
  \Omega'(A) = \frac{2\pi}{p(h)}>0.\eqlab{Omegaprime}
\end{align}
For our purposes, it will be more convinient to use $H=\Omega(A)$ rather than $A$ as an action-variable. Using \eqref{Omegaprime}, we have that 
\begin{align}
A\mapsto H=\Omega(A),\eqlab{H}
\end{align}
is a real-analytic diffeomorphism  with the inverse function $H\mapsto \Omega^{-1}(H)$ defined on $H\in \left(0,\frac12\right)$. We compose $\Psi_A$ with \eqref{H} and obtain the following real-analytic diffeomorphism:
\begin{align*}
 (r_1,v)\mapsto (H,\phi_1)\in \left(0,\frac12\right)\times \mathbb T,
\end{align*}
on $H^{-1}(I)$, transforming \eqref{r1veqns2} into
\begin{align*}
 H'&=0,\\
 \phi_1' &=\Omega_0(H),
\end{align*}
with
\begin{align}
 \Omega_0(H):=\Omega'(\Omega^{-1}(H)).\eqlab{Omega0H}
\end{align}

We now turn our attention to the $l>0$-system. We will again prefer to work with $x=l^3$ rather than $l$ and therefore consider \eqref{r1vxeqns0}, repeated here for convinience
\begin{equation}\eqlab{r1vxeqns02}
\begin{aligned}
 r_1' &=v+2\delta r_1 x,\\
 v'  &= -\frac{r_1-1}{r_1^3}-2\delta vx,\\
 x' &=-3 \delta x^2.
\end{aligned}
\end{equation}
\begin{lemma}\lemmalab{tildeH}
Let $X_1$ denote the vector-field of \eqref{r1vxeqns02}
and define the following function
\begin{align*}
 H_1(r_1,v,x):=H(r_1,v) +2 \delta r_1 v x +3 \delta^2 r_{1}^2 x^2.
\end{align*}
Then 
\begin{align}
 \mathcal L_{X_1} H_1(r_1,v,x) &=-6\delta^3 r_1^2 x^3,\eqlab{H1dot}
\end{align}
where $\mathcal L_{X_1} H_1=\nabla H_1 \cdot X_1$ is the Lie-derivative. 
\end{lemma}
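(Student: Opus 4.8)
\textbf{Proof proposal for \lemmaref{tildeH}.}

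The plan is to verify \eqref{H1dot} by a direct computation of the Lie derivative $\mathcal{L}_{X_1} H_1 = \partial_{r_1} H_1 \cdot r_1' + \partial_v H_1 \cdot v' + \partial_x H_1 \cdot x'$, using the right-hand sides of \eqref{r1vxeqns02} and exploiting that $H(r_1,v)$ is already a conserved quantity for the $x=0$ flow \eqref{r1veqns2}. First I would record the partial derivatives of the three pieces of $H_1$: writing $H_1 = H(r_1,v) + 2\delta r_1 v x + 3\delta^2 r_1^2 x^2$, we have $\partial_{r_1} H_1 = \partial_{r_1} H + 2\delta v x + 6\delta^2 r_1 x^2$, $\partial_v H_1 = v + 2\delta r_1 x$, and $\partial_x H_1 = 2\delta r_1 v + 6\delta^2 r_1^2 x$, where $\partial_{r_1} H = (r_1-1)/r_1^3$ and $\partial_v H = v$ by \eqref{H0func}.

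Next I would substitute $r_1' = v + 2\delta r_1 x$, $v' = -(r_1-1)/r_1^3 - 2\delta v x$, and $x' = -3\delta x^2$ and group the resulting terms by total degree in $x$. The degree-zero part is $\partial_{r_1} H \cdot v + \partial_v H \cdot (-(r_1-1)/r_1^3) = \mathcal{L}_{X_0} H = 0$, since $H$ is the Hamiltonian for \eqref{r1veqns2}. The key structural observation, which is really the point of the definition of $H_1$, is that the degree-one and degree-two contributions in $x$ should cancel identically: the ``correction'' terms $2\delta r_1 v x$ and $3\delta^2 r_1^2 x^2$ are designed precisely so that the drag terms $2\delta r_1 x$ in $r_1'$, $-2\delta v x$ in $v'$, and $-3\delta x^2$ in $x'$ are absorbed, leaving only the degree-three remainder $-6\delta^3 r_1^2 x^3$. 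I would collect the $O(x)$ terms: $\partial_{r_1} H \cdot 2\delta r_1 x + \partial_v H \cdot (-2\delta v x) + 2\delta v x \cdot v + 2\delta r_1 x \cdot (-(r_1-1)/r_1^3) + 2\delta r_1 v \cdot (-3\delta x^2)$; the first and fourth terms cancel (both equal $\pm 2\delta x (r_1-1)/r_1^2$), the second and third cancel (both equal $\mp 2\delta v^2 x$), so nothing of order $x$ survives except a piece that is actually $O(x^2)$ from the last term, which I fold into the next grouping. Then I collect the $O(x^2)$ terms similarly and check they reduce to $-6\delta^3 r_1^2 x^3$ after the $x'$-substitution pushes one factor up to $x^3$.

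I do not anticipate a genuine obstacle here: the lemma is a bookkeeping identity and the proof is the verification that the cancellations advertised above actually occur. The only mild care required is tracking which monomials get bumped from degree $k$ to degree $k+1$ in $x$ by the factor $x' = -3\delta x^2$, so that the final remainder is correctly identified as the single cubic term $-6\delta^3 r_1^2 x^3$ coming from $\partial_x H_1 \cdot x' = (2\delta r_1 v + 6\delta^2 r_1^2 x)\cdot(-3\delta x^2)$ together with the surviving cross-terms; the $-6\delta r_1 v \delta x^2 \cdot(\ldots)$ contributions must be seen to cancel against the $O(x^2)$ part of $\partial_{r_1}H_1 \cdot r_1' + \partial_v H_1 \cdot v'$, leaving exactly $-18\delta^3 r_1^2 x^3 + 12\delta^3 r_1^2 x^3 = -6\delta^3 r_1^2 x^3$. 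Hence $\mathcal{L}_{X_1} H_1 = -6\delta^3 r_1^2 x^3$ as claimed.
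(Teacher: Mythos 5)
Your proposal is correct and follows essentially the same route as the paper: \lemmaref{tildeH} is proved there by exactly this direct computation, namely $\partial_{r_1}H_1\,r_1'+\partial_v H_1\,v' = 6\delta^2 r_1 v x^2 + 12\delta^3 r_1^2 x^3$ and $\partial_x H_1\,x' = -6\delta^2 r_1 v x^2 - 18\delta^3 r_1^2 x^3$, which sum to $-6\delta^3 r_1^2 x^3$, and your degree-by-degree bookkeeping reproduces precisely these cancellations. Nothing further is needed.
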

\begin{proof}
Follows from a direct calculation:
 \begin{align*}
\frac{\partial H_1}{\partial r_1}r_1'+\frac{\partial H_1}{\partial v}v'&= 6 \delta^2 r_1 v x^2 + 12 \delta^3 r_1 x^3,\quad
\frac{\partial H_1}{\partial x}x' = -6 \delta^2 r_1 v x^2 - 18 \delta^3 r_{1}^2 x^3,
 \end{align*}
so that 
\begin{align*}
 \mathcal L_{X_1} H_1 = \frac{\partial H_1}{\partial r_1}r_1'+\frac{\partial H_1}{\partial v}v'+\frac{\partial H_1}{\partial x}x' =-6\delta^3 r_1^2 x^3.
\end{align*}

\end{proof}

\begin{lemma}\lemmalab{tildePsi}
Fix an open interval $J$ such that $\overline J\subset I$. Then there exists an $x_0>0$ such that 
\begin{align*}
 \widetilde \Psi: (r_1,v,x)\mapsto (
                                 H_1,
                                 \phi_1,
                                 x
                                  )
\end{align*}
defined on $H^{-1}(J)\times (-x_0,x_0)$, is a real analytic diffeomorphism.
\end{lemma}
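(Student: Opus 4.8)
\textbf{Proof proposal for \lemmaref{tildePsi}.}

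The plan is to verify that $\widetilde\Psi$ is a well-defined, injective real-analytic local diffeomorphism on the stated domain, and then argue it is a diffeomorphism onto its image by a continuity/degree-type argument reducing to the known $x=0$ case. First I would observe that at $x=0$ the map $\widetilde\Psi$ restricts to $(r_1,v)\mapsto (H(r_1,v),\phi_1,0) = (H,\Psi_\phi(r_1,v),0)$, which by the action-angle construction of \secref{41} (composed with the real-analytic diffeomorphism \eqref{H}) is precisely the real-analytic diffeomorphism $(r_1,v)\mapsto(H,\phi_1)$ from $H^{-1}(I)$ onto $I\times\mathbb T$; restricting to $H^{-1}(J)$ with $\overline J\subset I$ gives a real-analytic diffeomorphism onto $J\times\mathbb T$. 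Since $H_1(r_1,v,x) = H(r_1,v) + 2\delta r_1 v x + 3\delta^2 r_1^2 x^2 = H(r_1,v) + O(x)$, the map $\widetilde\Psi$ is an $x$-dependent perturbation of this baseline diffeomorphism that reduces to it at $x=0$ and keeps the last coordinate $x$ fixed.

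Next I would check the Jacobian. Writing $\widetilde\Psi$ in block form with respect to the splitting $(r_1,v)$ versus $x$, the derivative has the shape
\begin{align*}
 D\widetilde\Psi = \begin{pmatrix} \partial_{(r_1,v)}(H_1,\phi_1) & \partial_x(H_1,\phi_1)\\ 0 & 1\end{pmatrix},
\end{align*}
so $\det D\widetilde\Psi = \det \partial_{(r_1,v)}(H_1,\phi_1)$. At $x=0$ this determinant equals $\det\partial_{(r_1,v)}(H,\phi_1)$, which is nonzero everywhere on $H^{-1}(J)$ because $(r_1,v)\mapsto(H,\phi_1)$ is a diffeomorphism there. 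By compactness of $\overline J$ (hence of the corresponding closed annular region in the $(r_1,v)$-plane, which stays away from $r_1=0$ and from the center $(1,0)$) and continuity of the determinant in $(r_1,v,x)$, there is $x_0>0$ such that $\det D\widetilde\Psi \ne 0$ on $H^{-1}(J)\times(-x_0,x_0)$. Thus $\widetilde\Psi$ is a real-analytic local diffeomorphism there.

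It remains to upgrade "local diffeomorphism" to "diffeomorphism onto its image", i.e. global injectivity on $H^{-1}(J)\times(-x_0,x_0)$. The last component being the identity in $x$ means injectivity need only be checked slicewise: for each fixed $x$ with $|x|<x_0$, the map $\widetilde\Psi_x:(r_1,v)\mapsto(H_1(r_1,v,x),\Psi_\phi(r_1,v))$ must be injective on the annular region $H^{-1}(J)$. Here I would use that $\widetilde\Psi_x$ is a local diffeomorphism for all such $x$, that it is proper onto its image with image close (in $C^1$, uniformly for $|x|<x_0$ after possibly shrinking $x_0$) to the annulus $J\times\mathbb T$, and that $\widetilde\Psi_0$ is a diffeomorphism; a standard perturbation argument (e.g. via the fact that a proper local diffeomorphism between manifolds is a covering map, and the covering degree is locally constant in the parameter $x$, hence equal to its value $1$ at $x=0$) then forces $\widetilde\Psi_x$ to be a diffeomorphism onto its image for $|x|$ small. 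Shrinking $x_0$ once more if needed to make this uniform completes the argument. The main obstacle is precisely this last step: passing from the infinitesimal (nonvanishing Jacobian) information to global injectivity. It is not automatic because $\mathbb T$ is not simply connected, so one genuinely needs the properness/covering-degree argument (or, alternatively, an explicit estimate showing $H_1$ is monotone along each orbit transversal and $\phi_1$ winds exactly once), rather than a naive application of the inverse function theorem.
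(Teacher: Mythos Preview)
Your proposal is correct and follows the same approach as the paper: both argue that $\widetilde\Psi$ is an $x$-fibered perturbation of the known action-angle diffeomorphism $(r_1,v,x)\mapsto(H(r_1,v),\phi_1(r_1,v),x)$ and then invoke the inverse function theorem. The paper's proof is a two-line sketch that stops at ``the result therefore follows by the inverse function theorem,'' whereas you go further and explicitly address the passage from local to global invertibility via a properness/covering-degree argument on each $x$-slice; this is a genuine point the paper leaves implicit (the inverse function theorem alone only yields a local diffeomorphism), and your treatment of it is sound.
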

\begin{proof}
 $\widetilde \Psi(r_1,v,x) = (\widetilde \Psi_1(r_1,v,x),\widetilde \Psi_2(r_1,v,x),x)$ is an $x$-fibered (polynomial) perturbation of the diffeomorphism defined by $(r_1,v,x)\mapsto (H(r_1,v_1),\phi_1(r_1,v_1),x)$. The result therefore follows by the inverse function theorem.
\end{proof}

\begin{lemma}
Let $X_1$ denote the vector-field of \eqref{r1vxeqns02} and consider $\widetilde \Psi$ from \lemmaref{tildePsi}. Then $\widetilde \Psi_* X_1$ takes the following form:
 \begin{equation}\eqlab{Hphixeqns}
\begin{aligned}
 {H}_1' &= x^3 R_1(H_1,\phi_1,x),\\
 \phi_1' & = \Omega_0(H_1)+x P_1(H_1,\phi_1,x),\\
 x' &= -3\delta x^2,
\end{aligned}
\end{equation}
with $\Omega_0$ defined in \eqref{Omega0H}, for all $H_1\in J$. The functions $R_1$ and $P_1$ are both real-analytic functions defined  on $J\times \mathbb T \times (-x_1,x_1)$ for $x_1>0$ sufficiently small. 
\end{lemma}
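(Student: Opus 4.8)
The plan is to compute the pushforward $\widetilde\Psi_* X_1$ one coordinate at a time, using Lemmas~\lemmaref{tildeH} and~\lemmaref{tildePsi} together with the real-analyticity of the inverse $\widetilde\Psi^{-1}$. Recall that for a coordinate function $\xi=\widetilde\Psi_j$ one has $\xi' = \mathcal L_{X_1}\xi$, re-expressed through $\widetilde\Psi^{-1}$. The $x$-equation is immediate: since $\widetilde\Psi$ leaves $x$ fixed, the third component of $\widetilde\Psi_* X_1$ is unchanged, $x'=-3\delta x^2$.

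For the $H_1$-equation, \lemmaref{tildeH} gives $\mathcal L_{X_1}H_1 = -6\delta^3 r_1^2 x^3$. By \lemmaref{tildePsi}, $\widetilde\Psi$ is a real-analytic diffeomorphism of $H^{-1}(J)\times(-x_0,x_0)$ onto its image, so $\widetilde\Psi^{-1}$ is real-analytic; write $r_1=\rho(H_1,\phi_1,x)$ for its first component. Setting $R_1(H_1,\phi_1,x):=-6\delta^3\,\rho(H_1,\phi_1,x)^2$ then yields $H_1'=x^3 R_1(H_1,\phi_1,x)$ with $R_1$ real-analytic. Since $\overline J\subset I$ is compact, $\widetilde\Psi^{-1}$, and hence $R_1$, is defined and real-analytic on all of $J\times\mathbb{T}\times(-x_1,x_1)$ for some $x_1\in(0,x_0]$.

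For the $\phi_1$-equation, split the field of \eqref{r1vxeqns02} as $X_1=X_0+xZ$, where $X_0=\bigl(v,\,-\tfrac{r_1-1}{r_1^3},\,0\bigr)$ is the planar Hamiltonian field \eqref{r1veqns2} trivially extended in $x$, and $Z=(2\delta r_1,\,-2\delta v,\,-3\delta x)$ is polynomial. Then $\phi_1'=\mathcal L_{X_1}\widetilde\Psi_2=\mathcal L_{X_0}\widetilde\Psi_2 + x\,\mathcal L_{Z}\widetilde\Psi_2$, and the second term is $x$ times a real-analytic function. For the first term, let $\psi_0(r_1,v):=\widetilde\Psi_2(r_1,v,0)$, which is precisely the base angle coordinate of \secref{41}; since $X_0$ has vanishing $x$-component, $\mathcal L_{X_0}\widetilde\Psi_2 - \mathcal L_{X_0}\psi_0 = \mathcal L_{X_0}\bigl[\widetilde\Psi_2(\cdot,\cdot,x)-\psi_0\bigr]$ vanishes at $x=0$ and is thus $x$ times a real-analytic function, while $\mathcal L_{X_0}\psi_0=\Omega_0\bigl(H(r_1,v)\bigr)$ by the normal form $H'=0,\ \phi_1'=\Omega_0(H)$ established in \secref{41}. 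Finally, from the definition of $H_1$, $H(r_1,v)=H_1 - 2\delta r_1 v x - 3\delta^2 r_1^2 x^2 = H_1 + x\,g(r_1,v,x)$ with $g$ polynomial, and since $\Omega_0$ (see \eqref{Omega0H}) is real-analytic on a neighbourhood of $\overline J$, Taylor's theorem with integral remainder gives $\Omega_0(H_1+xg)=\Omega_0(H_1)+x\,g\int_0^1\Omega_0'(H_1+txg)\,dt$, the remainder being real-analytic for $x$ small. Collecting all the $O(x)$ contributions and re-expressing $(r_1,v)$ through $\widetilde\Psi^{-1}$ produces $\phi_1'=\Omega_0(H_1)+xP_1(H_1,\phi_1,x)$ with $P_1$ real-analytic on $J\times\mathbb{T}\times(-x_1,x_1)$, after possibly shrinking $x_1$.

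The only genuine subtlety -- the main obstacle -- is the uniform control of domains: one must choose $x_1>0$ small enough, uniformly over $H_1\in J$, so that (i) $\widetilde\Psi^{-1}$ is defined and real-analytic on the full cylinder $J\times\mathbb{T}\times(-x_1,x_1)$, and (ii) the argument $H_1+txg$ in the Taylor remainder stays inside the domain of analyticity of $\Omega_0$ for all $t\in[0,1]$. Both follow from the compactness of $\overline J$ in $I$ and continuity, but they are exactly what forces the passage from $x_0$ to a possibly smaller $x_1$ in the statement. Everything else is a direct computation.
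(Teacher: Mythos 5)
Your proposal is correct and follows essentially the same route as the paper, which deduces the lemma directly from \eqref{phidot}, \lemmaref{tildeH} and \lemmaref{tildePsi} (with $R_1=-6\delta^3 r_1^2$ expressed through $\widetilde\Psi^{-1}$); your splitting $X_1=X_0+xZ$ and the Taylor expansion of $\Omega_0$ around $H_1$ simply make explicit the verification of the $\phi_1$-equation that the paper leaves implicit. The domain-shrinking point you flag (compactness of $\overline J\subset I$ forcing $x_1\le x_0$) is exactly the role of ``$x_1>0$ sufficiently small'' in the statement, so nothing is missing.
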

\begin{proof}
 The result follows directly from \eqref{phidot}, \lemmaref{tildeH} and \lemmaref{tildePsi}. In particular, by \eqref{H1dot} we have that 
 \begin{align*}
  R_1(H_1,\phi_1,x) = -6\delta^3 r_1^2,
 \end{align*}
with $r_1=r_1(H_1,\phi_1,x)$, cf. \lemmaref{tildePsi}.
\end{proof}


%


\subsection{A normal form}\seclab{42}
We will now use an averaging approach to normalize \eqref{Hphixeqns}. This will consist of pushing the angle-dependency to higher order with respect to $x$. 

\begin{lemma}\lemmalab{Iterative}
 \textnormal{(The Iterative Lemma)}
 Consider the real-analytic system 
 \begin{equation}\eqlab{Hnphixeqns}
\begin{aligned}
 {H}_n' &= x^3 \Lambda_n(H_n,x)+x^{n+2} R_n(H_n,\phi_n,x),\\
 \phi_n' & = \Omega_n(H_n,x)+x^{n} P_n(H_n,\phi_n,x),\\
 x' &= -3\delta x^2,
\end{aligned}
\end{equation}
defined on $J_n\times \mathbb T\times (x_n,x_n)$ and with $n\in \mathbb N$. Here $\Omega_n(H_n,0)=\Omega_0(H_n)\ge c>0$. Then for any $J_{n+1}\subset J_n$ there exists a constant $x_{n+1}>0$ and an $x$-fibered, real-analytic diffeomorphism $(H_n,\phi_n,x)\mapsto (H_{n+1},\phi_{n+},x)$ of the near-identify form
\begin{align}
 H_{n+1} &= H_n + x^{n+2} T_n(H_n,\phi_n,x),\eqlab{Hn1}\\
 \phi_{n+1}&=\phi_n+x^{n} Q_n(H_n,\phi_n,x),\eqlab{phin1}
\end{align}
such that
 \begin{equation}\eqlab{Hn1phixeqns}
\begin{aligned}
 {H}_{n+1}' &= x^3 \Lambda_{n+1}(H_{n+1},x)+x^{n+3} R_{n+1}(H_{n+1},\phi_{n+1},x),\\
 \phi_{n+1}' & = \Omega_{n+1}(H_{n+1},x)+x^{n+1} P_{n+1}(H_{n+1},\phi_{n+1},x),\\
 x'&= -3\delta x^2,
\end{aligned}
\end{equation}
with the right hand side being real-analytic on $J_{n+1}\times \mathbb T\times (-x_{n+1},x_{n+1})$.
\end{lemma}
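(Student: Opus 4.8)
The plan is to prove the Iterative Lemma by a single step of averaging (normal-form reduction): I will construct the generators $T_n$ and $Q_n$ of the near-identity transformation \eqref{Hn1}--\eqref{phin1} as solutions of two decoupled homological equations over the angle $\phi_n$, chosen so as to annihilate exactly the angle-dependent part of the lowest-order remainder in each of the first two equations of \eqref{Hnphixeqns}, while leaving the already-normalized terms (and the normalization $\Omega_\bullet(\cdot,0)=\Omega_0$) intact. The key structural observation is that, since $x'=-3\delta x^2$ is $O(x^2)$, a factor $x^m$ in the transformation behaves to leading order like a constant under the flow, so the only new contributions at the critical orders come from the fast rotation $\Omega_0$ acting through $\partial_{\phi_n}$.

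First I would fix an open interval $J_{n+1}$ with $\overline{J_{n+1}}\subset J_n$, so that $\Omega_0\ge c>0$ holds uniformly and $R_n$, $P_n$, $\Lambda_n$, $\Omega_n$ together with their first derivatives are bounded on $\overline{J_{n+1}}\times\mathbb T\times\{|x|\le x_n\}$. I write $\Omega_n(H,x)=\Omega_0(H)+x\,\widetilde\Omega_n(H,x)$ with $\widetilde\Omega_n$ real-analytic, and let $\langle g\rangle(H,x):=\frac{1}{2\pi}\int_0^{2\pi} g(H,\psi,x)\,d\psi$ denote the angular mean. I then define
\begin{align*}
 T_n(H,\phi,x) &:= -\frac{1}{\Omega_0(H)}\int_0^{\phi}\bigl(R_n(H,\psi,x)-\langle R_n\rangle(H,x)\bigr)\,d\psi, \\
 Q_n(H,\phi,x) &:= -\frac{1}{\Omega_0(H)}\int_0^{\phi}\bigl(P_n(H,\psi,x)-\langle P_n\rangle(H,x)\bigr)\,d\psi,
\end{align*}
which are real-analytic and $2\pi$-periodic in $\phi$ precisely because the integrands have zero mean; moreover $\Omega_0(H)\,\partial_\phi T_n=\langle R_n\rangle-R_n$ and $\Omega_0(H)\,\partial_\phi Q_n=\langle P_n\rangle-P_n$. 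For $x_{n+1}>0$ small enough, \eqref{Hn1}--\eqref{phin1} is then a bounded, near-identity perturbation of the identity (with the $\phi$-fibre living on $\mathbb T$), hence an $x$-fibered real-analytic diffeomorphism for $x_{n+1}$ sufficiently small, by the inverse function theorem.

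Next I would differentiate $H_{n+1}$ and $\phi_{n+1}$ along \eqref{Hnphixeqns} by the chain rule, substitute $x'=-3\delta x^2$, and re-express the result through $H_n=H_{n+1}+O(x^{n+2})$, $\phi_n=\phi_{n+1}+O(x^n)$. For the $H$-equation one checks that, apart from $x^3\Lambda_n(H_{n+1},x)$, $x^{n+2}R_n(H_{n+1},\phi_{n+1},x)$ and $x^{n+2}\Omega_0(H_{n+1})\,\partial_{\phi_n}T_n(H_{n+1},\phi_{n+1},x)$, every remaining term is $O(x^{n+3})$: the contributions $(n+2)x^{n+1}x'T_n$, $x^{n+2}\partial_HT_n\cdot H_n'$, $x^{n+2}\partial_{\phi_n}T_n\cdot\bigl(x\widetilde\Omega_n+x^nP_n\bigr)$, $x^{n+2}\partial_xT_n\cdot x'$, and the Taylor remainders of $\Lambda_n$ and $R_n$ under the change of variables are each of order at least $x^{n+3}$ because $n\ge1$. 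By the choice of $T_n$ the three critical $x^{n+2}$-terms collapse to the angle-independent quantity $x^{n+2}\langle R_n\rangle(H_{n+1},x)$, so setting $\Lambda_{n+1}(H,x):=\Lambda_n(H,x)+x^{n-1}\langle R_n\rangle(H,x)$ (real-analytic since $n\ge1$) yields $H_{n+1}'=x^3\Lambda_{n+1}(H_{n+1},x)+x^{n+3}R_{n+1}$ with $R_{n+1}$ real-analytic. The identical bookkeeping for $\phi_{n+1}'$, where now the $O(1)$ term $\Omega_n(H_n,x)$ is present, leaves as the only $x^n$-terms $x^nP_n(H_{n+1},\phi_{n+1},x)$ and $x^n\Omega_0(H_{n+1})\,\partial_{\phi_n}Q_n(H_{n+1},\phi_{n+1},x)$, which the choice of $Q_n$ turns into $x^n\langle P_n\rangle(H_{n+1},x)$; hence $\Omega_{n+1}(H,x):=\Omega_n(H,x)+x^n\langle P_n\rangle(H,x)$ — which still satisfies $\Omega_{n+1}(H,0)=\Omega_0(H)$ since $n\ge1$ — together with a real-analytic remainder $x^{n+1}P_{n+1}$, gives \eqref{Hn1phixeqns}.

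The main obstacle is purely organizational: keeping the order-counting honest, i.e. verifying that no angle-dependent term at the critical order ($x^{n+2}$ for $H$, $x^n$ for $\phi$) is produced by the chain-rule cross terms, by the $x'=-3\delta x^2$ contribution, or by rewriting the already-normalized parts $x^3\Lambda_n$ and $\Omega_n$ in the new coordinates. This is exactly where the hypotheses are used: $n\ge1$ makes all parasitic terms subcritical (and makes $\Lambda_{n+1}$ polynomial-times-analytic), $\Omega_0\ge c>0$ is what lets one solve the homological equations, and the specific powers $x^{n+2}$ (resp. $x^n$) in the ansatz \eqref{Hn1}--\eqref{phin1} are chosen to match the rotation-generated term $\Omega_0\,\partial_\phi(\cdot)$ against the remainder one wants to remove. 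Real-analyticity and $2\pi$-periodicity of $\Lambda_{n+1},\Omega_{n+1},R_{n+1},P_{n+1}$ then follow because averaging, $\phi$-antidifferentiation of mean-zero analytic functions, division by the non-vanishing analytic $\Omega_0$, and composition with the analytic inverse of the near-identity map all preserve these properties.
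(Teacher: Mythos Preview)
Your proof is correct and follows essentially the same averaging approach as the paper: solve the homological equation over $\phi$ by antidifferentiating the mean-zero parts of $R_n$ and $P_n$ (scaled by $\Omega_0^{-1}$) so as to kill the angle-dependent remainder at the critical orders, then verify by chain-rule bookkeeping that all cross terms are subcritical. The only difference is organizational --- the paper applies the $H$- and $\phi$-transformations sequentially (introducing an intermediate $P_{n+\frac12}$ after updating $H$) rather than simultaneously as you do --- and your formula $\Lambda_{n+1}=\Lambda_n+x^{\,n-1}\langle R_n\rangle$ is in fact the correct one (the paper's displayed exponent $x^n$ there is a typo).
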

\begin{proof}
 The result follows from a modification of the classical averaging theorem, see e.g. \cite{Guckenheimer97}. We write 
 \begin{align*}
  R_n(H_n,\phi_n,x)  = \overline R_n(H_n,x)+\widetilde R_n(H_n,\phi_n,x),
 \end{align*}
where 
\begin{align*}
\overline R_n(H_n,x):= \frac{1}{2\pi}\int_0^{2\pi} R_n(H_n,s,x) ds,
 \end{align*}
 is the mean of $\phi\mapsto R_n(H_n,\phi,x)$, and where $\widetilde R_n$ has zero mean:
  \begin{align}
   \int_{0}^{2\pi} \widetilde R_n(H_n,s,x) ds = 0.\eqlab{tildeRzeromean}
  \end{align}
We then take 
\begin{align*}
T_n(H_n,\phi_n,x):=-\Omega_0(H_n)^{-1} \int_{0}^{\phi_n} \widetilde R_{n}(H_n,s,x)ds,
\end{align*}
in \eqref{Hn1}. Notice that $T_n$ is well-defined for $\phi_n \in \mathbb T$  by \eqref{tildeRzeromean}. Moreover,
\begin{align}
 \frac{\partial }{\partial \phi_n} T_n(H_n,\phi_n,x) = -\Omega_0(H_n)^{-1} \widetilde R_{n}(H_n,\phi_n,x).\eqlab{Tnpartial}
\end{align}
We therefore have by \eqref{Hn1}:
\begin{align*}
H_{n+1}' &= H_n' +x^{n+2} \frac{\partial}{\partial\phi} T_n(H_n,\phi_n,x)\phi_n'+\mathcal O(x^{n+3})\\
 &=x^3 \Lambda_n(H_n,x) +x^{n+2} \overline R_n(H_n,x)+x^{n+2}\left\{ \widetilde R_n(H_n,\phi_n,x) +\frac{\partial}{\partial \phi_n}T_n(H_n,\phi_n,x) \Omega_0(H_n) \right\}\\
 &+ \mathcal O(x^{n+3})\\
 &:=x^3 \Lambda_{n+1}(H_{n+1},x) + x^{n+3} R_{n+ \frac12}(H_{n+1},\phi_n,x),
\end{align*}
with 
\begin{align*}
 \Lambda_{n+1}(H_{n+1},x) :=\Lambda_n(H_{n+1},x)+x^n \overline R_n(H_{n+1},x),
\end{align*}
using \eqref{Tnpartial} to conclude that $\{\cdots\}=0$. 

Subsequently, we define
\begin{align*}
 P_{n+\frac12}(H_{n+1},\phi_n,x):=x^{-n} \left[\Omega_n(H_n,x)-\Omega_n(H_{n+1},x) +x^{n} P_n(H_n,\phi_n,x)\right],
\end{align*}
such that 
\begin{align*}
 \phi_n' = \Omega_n(H_{n+1},x)+x^{n}P_{n+\frac12}(H_{n+1},\phi_n,x).
\end{align*}
 It follows from \eqref{Hn1} that $P_{n+\frac12}$ extends smoothly to $x=0$. We write 
 \begin{align*}
  P_{n+\frac12}(H_{n+1},\phi_n,x)  = \overline P_{n+\frac12}(H_{n+1},x)+\widetilde P_{n+\frac12}(H_{n+1},\phi_n,x),
 \end{align*}
where 
\begin{align*}
\overline P_{n+\frac12}(H_{n+1},x):= \frac{1}{2\pi}\int_0^{2\pi} P_{n+\frac12}(H_{n+1},s,x) ds,
 \end{align*}
 is the mean of $\phi\mapsto P_{n+\frac12}(H_{n+1},\phi,x)$, and where $\widetilde P_{n+\frac12}$ has zero mean:
  \begin{align}
   \int_{0}^{2\pi} \widetilde P_{n+\frac12}(H_{n+1},s,x) ds = 0.\nonumber
  \end{align}
Then we apply the same procedure on the $\phi_n$-equation. In particular, we consider a transformation defined by
\begin{align*}
\phi_{n+1}&=\phi_n+x^{n}  Q_{n+\frac12}(H_{n+1},\phi_n,x),
\end{align*}
fixing $H_{n+1}$ and $x$, 
with
\begin{align}
 Q_{n+\frac12}(H_{n+1},\phi_n,x):=-\Omega_0(H_{n+1})^{-1} \int_{0}^{\phi_n} \widetilde P_{n+\frac12}(H_{n+1},s,x)ds.\eqlab{Qn1_2}
\end{align}
This leads to the following equation for $\phi_{n+1}$:
\begin{align*}
  \phi_{n+1}' &= \phi_n' + x^{n} \frac{\partial }{\partial \phi_n} Q_{n+\frac12}(H_{n+1},\phi_n,x)\phi_n' +\mathcal O(x^{n+1})\\
 &=\Omega_n(H_{n+1},x)+x^{n}\overline P_{n+\frac12}(H_{n+1},x)\\
 &+x^{n}\left\{\widetilde P_{n+\frac12}(H_{n+1},\phi,x)+\frac{\partial }{\partial \phi_n} Q_{n+\frac12}(H_{n+1},\phi_n,x)\Omega_0(H_{n+1})\right\}+\mathcal O(x^{n+1})\\
 &=:\Omega_{n+1}(H_{n+1},x) +x^{n+1} P_{n+1}(H_{n+1},\phi_{n+1},x),
\end{align*}
with
\begin{align*}
 \Omega_{n+1}(H_{n+1},x):=\Omega_n(H_{n+1},x)+x^{n}\overline P_{n+\frac12}(H_{n+1},x),
\end{align*}
using \eqref{Qn1_2} to conclude that $\{\cdots\}=0$. Finally, we put 
$$R_{n+1}(H_{n+1},\phi_{n+1},x):=R_{n+\frac12}(H_{n+1},\phi_n,x).$$ This completes the proof. 
\end{proof}

Since \eqref{Hphixeqns} satisfies the conditions of The Iterative Lemma with $n=1$ and $\Lambda_1(H_1,x)\equiv 0$, $\Omega_1(H_1,x)\equiv \Omega_0(H_1)$, we conclude that for every $J\subset I$ and every $N\in \mathbb N_0$, we can transform \eqref{Hphixeqns} into ``the normal form'':
\begin{equation}\eqlab{HNphixeqns0}
 \begin{aligned}
 {H}_{N+1}' &= x^3 \Lambda_{N+1}(H_{N+1},x)+x^{N+3} R_{N+1}(H_{N+1},\phi_{N+1},x),\\
 \phi_{N+1}' & = \Omega_{N+1}(H_{N+1},x)+x^{N+1} P_{N+1}(H_{N+1},\phi_{N+1},x),\\
 x' &= -3\delta x^2,
\end{aligned}
\end{equation}
on $J\times \mathbb T\times (-x_{N+1},x_{N+1})$ for $x_{N+1}>0$ sufficiently small, by a near-identify transformation of $H$ and $\phi$. We drop the subscripts henceforth and write it in the equivalent form
\begin{equation}\eqlab{HNphixeqns}
\begin{aligned}
 \frac{dH}{d\tau} &= \frac{1}{3\delta} x\left[ \Lambda(H,x)+x^{N} R(H,\phi,x)\right],\\
 \frac{d\phi}{d\tau} & = \frac{1}{3\delta x^2} \left\{\Omega(H,x)+x^{N+1} P(H,\phi,x)\right\},\\
 \frac{dx}{d\tau}&=-1,
\end{aligned}
\end{equation}
for $x>0$.

\begin{lemma}\lemmalab{H0phi0}
 The flow of \eqref{HNphixeqns}, $\underline H(\tau,H_0,\phi_0,x_0),\underline \phi(\tau,H_0,\phi_0,x_0)$, $\underline x(\tau,H_0,\phi_0,x_0)=x_0-\tau$ with 
 \begin{align*}
  \underline z(0,\cdot)=z,
 \end{align*}
for $z=H,\phi$, is $C^\infty$ on the set 
\begin{align}\eqlab{Vxi}
V(\xi):=\left\{(\tau,H_0,\phi_0,x_0) \in (0,\xi)\times J\times \mathbb T\times  (0,\xi)\,:\,0<\tau<x_0\right\},
\end{align} for $\xi>0$ sufficiently small. 
\end{lemma}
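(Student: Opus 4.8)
The plan is to deduce the statement from the standard theorem on smooth (indeed real-analytic) dependence of the flow of an ODE on its initial conditions, once one checks that the solutions parametrized by $V(\xi)$ never leave the region where the right-hand side of \eqref{HNphixeqns} is regular. The essential point is that this right-hand side is real-analytic (in particular $C^\infty$) on the open set where $x>0$ and $H$ lies in the interval on which the normal form \eqref{HNphixeqns} was built; its only singularity is at $x=0$, and this is excluded along the relevant orbits because $\underline x(\tau)=x_0-\tau>0$ whenever $0<\tau<x_0$.

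First I would redo the constructions of \secref{41} and \lemmaref{Iterative} on a slightly larger interval $J^{+}$, with $\overline J\subset J^{+}$ and $\overline{J^{+}}\subset I$, so that the coefficients $\Lambda,R,\Omega,P$ in \eqref{HNphixeqns} are real-analytic on $J^{+}\times\mathbb T\times(-x_{N+1},x_{N+1})$. I would then view \eqref{HNphixeqns} as an autonomous system on the open set $W:=\{(H,\phi,x):H\in J^{+},\ \phi\in\mathbb T,\ 0<x<x_{N+1}\}$, on which its vector-field is real-analytic; by the fundamental theorem on flows its flow is real-analytic on its maximal open domain of definition $\mathcal D\subset\mathbb R\times W$.

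The one substantive step is to verify $V(\xi)\subset\mathcal D$ for $\xi>0$ small. Fix initial data $(H_0,\phi_0,x_0)$ with $H_0\in J$, $\phi_0\in\mathbb T$ and $x_0\in(0,\xi)$. Along the solution, $\underline x(\tau)=x_0-\tau$ stays in $(0,x_0)$ automatically and $\underline\phi$ stays in the compact circle $\mathbb T$, so only $\underline H$ could escape. But as long as $\underline H$ remains in $J^{+}$ one has $|dH/d\tau|=\tfrac1{3\delta}\,\underline x\,|\Lambda+\underline x^{N}R|\le C\xi$, with $C$ an upper bound for the coefficients over a fixed compact subset of their domain and $\xi<x_{N+1}$; hence $|\underline H(\tau)-H_0|\le C\xi x_0\le C\xi^{2}$, and choosing $\xi$ with $C\xi^{2}<\operatorname{dist}(\overline J,\partial J^{+})$ a standard bootstrap keeps $\underline H(\tau)$ inside a fixed closed subinterval of $J^{+}$ throughout the interval of existence. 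Consequently, for each $\tau_1\in(0,x_0)$ the solution remains, on $[0,\tau_1]$, in a fixed compact subset of $W$, so by the escape lemma its maximal forward existence time is at least $x_0$. This gives $V(\xi)\subset\mathcal D$, and since $\underline x=x_0-\tau$ is trivially smooth in all variables, the $C^\infty$ (indeed $C^\omega$) regularity of $(\underline H,\underline\phi,\underline x)$ on $V(\xi)$ follows from the regularity of the flow on $\mathcal D$.

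I expect the only genuine obstacle to be this bookkeeping — keeping solutions away from $\partial J^{+}$ and from $x=0$ for $\tau<x_0$ — and it is dispatched simply by taking $\xi$ small: the total drift of $H$ over all of $(0,x_0)$ is only $O(\xi^{2})$, while $x$ decreases at unit speed and hence stays positive until $\tau=x_0$. No delicate analysis near $x=0$ is needed, precisely because the lemma asserts nothing about the limit $\tau\to x_0^{-}$; that is the subject of the following sections.
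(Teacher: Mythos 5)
Your proposal is correct and follows essentially the same route as the paper: standard smooth (real-analytic) dependence of the flow away from the singular set $x=0$, combined with an a priori bound on the drift of $\underline H$ (the paper's estimate \eqref{Hphibound}, your $|\underline H(\tau)-H_0|\le C\xi^2$) to conclude that solutions persist on all of $(0,x_0)$, i.e. $\tau_{\max}=x_0$. The only cosmetic differences are your explicit enlargement to $J^{+}$ (the paper handles this by its nested choice of intervals) and the fact that the paper also records the estimate on $\underline\phi$, which it needs later for \lemmaref{contH0} rather than for this lemma itself.
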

\begin{proof}
The right hand side is smooth, even real-analytic, and the existence of a smooth local flow $\underline H(\tau,H_0,\phi_0,x_0),\underline \phi(\tau,H_0,\phi_0,x_0),\underline x(\tau,H_0,\phi_0,x_0)=x_0-\tau$, with $\tau\in I(H_0,\phi_0,x_0):=(0,\tau_{\text{max}})$, therefore follows. 
We then integrate both sides of \eqref{HNphixeqns}:
\begin{align}
 \underline H(\tau,H_0,\phi_0,x_0) &=H_0-\frac{1}{3\delta}\int_{x_0}^x s \left[\cdots \right]ds,\eqlab{underlineH}\\
 \underline \phi(\tau,\phi_0,x_0)&=\phi_0 -\frac{1}{3\delta}\int_{x_0}^x s^{-2} \left\{\cdots \right\}ds,\nonumber
\end{align}
with $x=x_0-\tau$, and $[\cdots]$, $\{\cdots\}$ being the brackets in  \eqref{HNphixeqns} evaluated at $H=\underline H(x_0-x,H_0,\phi_0,x_0),\phi=\underline \phi(x_0-x,H_0,\phi_0,x_0)$.
It is then standard to arrive at the following estimate for $\tau\in I(H_0,\phi_0,x_0)$:
\begin{align}
 \vert \underline H(\tau,H_0,\phi_0,x_0)-H_0\vert \le  C\vert \tau\vert\le C x_0, \quad \vert \underline \phi(\tau,H_0,\phi_0,x_0) -\phi_0\vert \vert \tau-x_0\vert \le C,\eqlab{Hphibound}
\end{align}
for some $C>0$ large enough and all $(H_0,\phi_0,x_0)\in J\times \mathbb T\times (0,\xi)$, provided that $\xi>0$ is small enough. This shows that $\tau_{\text{max}}=x_0$ and completes the proof. 
%

%

\end{proof}

\subsection{The existence of smooth invariant manifolds}\seclab{43}
The inequalities in \eqref{Hphibound} provide $C^0$-estimates of $\underline H$ and $\underline \phi$, respectively. 
\begin{lemma}\lemmalab{contH0}
 $\underline H(\tau,H_0,\phi_0,x_0)$ extends continuously to the closure $\overline{V(\xi)}$, recall \eqref{Vxi}, with $\underline H(0,H_0,\phi_0,0)=H_0$ for all $(H_0,\phi_0)\in J\times \mathbb T$.
 \end{lemma}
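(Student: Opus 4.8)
\section*{Proof proposal for \lemmaref{contH0}}

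The plan is to work from the integral representation \eqref{underlineH} of the flow and to exploit that, although the angle $\underline\phi$ winds infinitely often as $x=x_{0}-\tau\to0^{+}$, the $H$-component changes only by an \emph{absolutely convergent} integral whose small-$x$ tail is uniformly negligible, so that the pathology of $\underline\phi$ is invisible to $\underline H$. First I would shrink $\xi>0$ so that, by the a priori bound \eqref{Hphibound}, every orbit issuing from $J\times\mathbb T\times(0,\xi)$ keeps $\underline H$ inside a fixed compact subset of the domain of $\Lambda$ and $R$ from \eqref{HNphixeqns}; then the integrand $s\mapsto s\big[\Lambda(\underline H,s)+s^{N}R(\underline H,\underline\phi,s)\big]$ occurring in \eqref{underlineH} is bounded by $Ms$ for a constant $M$ independent of $(\tau,H_{0},\phi_{0},x_{0})\in V(\xi)$. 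I would also record that, for $x_{0}>0$, the locus $\tau=0$ is a regular point of \eqref{HNphixeqns} (only $x=0$ is singular), so by \lemmaref{H0phi0} the maps $\underline H,\underline\phi$ extend $C^{\infty}$ to $\{0\le\tau<x_{0}\}$ with $\underline z(0,\cdot)=z$; the only singular face of $\overline{V(\xi)}$ is therefore $\{\tau=x_{0}\}$.

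Second, I would produce the boundary value on $\{\tau=x_{0}\}$. Since the integrand in \eqref{underlineH} is $O(s)$ near $s=0$, the improper integral $\int_{0}^{x_{0}}s\big[\,\cdots\,\big]\,ds$ converges, so I may \emph{define}
\begin{align*}
\mathcal H(H_{0},\phi_{0},x_{0}):=H_{0}+\tfrac{1}{3\delta}\int_{0}^{x_{0}}s\Big[\Lambda(\underline H(x_{0}-s,\cdot),s)+s^{N}R(\underline H(x_{0}-s,\cdot),\underline\phi(x_{0}-s,\cdot),s)\Big]\,ds
\end{align*}
for $x_{0}\in(0,\xi)$, and $\mathcal H(H_{0},\phi_{0},0):=H_{0}$. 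Comparing with \eqref{underlineH} yields the uniform estimate
\begin{align*}
\big|\underline H(\tau,H_{0},\phi_{0},x_{0})-\mathcal H(H_{0},\phi_{0},x_{0})\big|\le\tfrac{M}{6\delta}(x_{0}-\tau)^{2}\le\tfrac{M}{6\delta}\,x_{0}^{2},
\end{align*}
so $\underline H(\tau,\cdot)\to\mathcal H(\cdot)$ as $\tau\to x_{0}^{-}$, and $\mathcal H(H_{0},\phi_{0},x_{0})\to H_{0}$ as $x_{0}\to0^{+}$. Hence $\mathcal H$ is the only candidate for the value of the continuous extension on the singular face.

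Third, I would prove that $\mathcal H$ is continuous on $J\times\mathbb T\times[0,\xi)$. Given $\eta>0$, split $\int_{0}^{x_{0}}=\int_{0}^{\varepsilon}+\int_{\varepsilon}^{x_{0}}$; the first piece is bounded by $\tfrac{M\varepsilon^{2}}{6\delta}$ regardless of the data, so a small $\varepsilon$ makes it negligible. For $s\in[\varepsilon,x_{0}]$ the evaluation point $(x_{0}-s,H_{0},\phi_{0},x_{0})$ lies in $\{0\le\tau\le x_{0}-\varepsilon\}$, i.e.\ in the region where $\underline H,\underline\phi$ are $C^{\infty}$ (equivalently $x=x_{0}-\tau\ge\varepsilon$ keeps us away from the singular locus, so the $\phi$-equation in \eqref{HNphixeqns} has right-hand side bounded by $C/\varepsilon^{2}$ and $\underline\phi$ is Lipschitz in $\tau$ there); hence $(H_{0},\phi_{0},x_{0})\mapsto\int_{\varepsilon}^{x_{0}}s\big[\,\cdots\,\big]\,ds$ is continuous for $x_{0}>\varepsilon$, being the integral of a jointly continuous integrand over a continuously varying compact interval. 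Together with the $x_{0}\to0$ estimate above (which also covers $x_{0}\le\varepsilon$), this gives continuity of $\mathcal H$. Finally I would assemble the extension $\widehat H$ equal to $\underline H$ on $V(\xi)$ and to $\mathcal H$ on $\{\tau=x_{0}\}$: it is $C^{\infty}$ on $V(\xi)$; it is continuous across $\{\tau=x_{0}\}$ by the uniform estimate combined with continuity of $\mathcal H$; it is continuous across $\{\tau=0\}$ since $|\underline H(\tau,\cdot)-H_{0}|\le C\tau$ by \eqref{Hphibound}; and the remaining faces and corners of $\overline{V(\xi)}$ sit in the region where the flow is smooth, so they need nothing new. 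In particular $\widehat H(0,H_{0},\phi_{0},0)=H_{0}$ for all $(H_{0},\phi_{0})\in J\times\mathbb T$.

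The main obstacle is precisely the singular face $\{\tau=x_{0}\}$, where $\underline\phi$ genuinely fails to converge. What makes the argument go through is the factor $x$ in $\tfrac{dH}{d\tau}$ in \eqref{HNphixeqns} (equivalently, the $x^{3}$ in the $H'$-equation of \eqref{Hphixeqns}): it forces the $H$-increment to be an absolutely convergent integral with uniformly $O(\varepsilon^{2})$ tail near $x=0$, so $\underline H$ remains well behaved even though $\underline\phi$ does not. The averaging normal form (angle-independence of $\Lambda$, angle-dependence pushed to order $x^{N}$) only sharpens these tail estimates and will become essential for the $C^{k}$-statements \thmref{main2}--\thmref{main3}, but for the present continuity claim the factor $x$ alone suffices; the one residual nuisance is the bookkeeping of the non-singular boundary faces of $\overline{V(\xi)}$, which is routine.
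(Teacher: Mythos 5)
Your argument is correct and is essentially the paper's own proof spelled out in detail: both rest on the integral representation \eqref{underlineH} together with the factor $x$ in the $H$-equation of \eqref{HNphixeqns}, which makes the integrand $O(s)$ and hence $(\tau,x_0)\mapsto\underline H$ uniformly absolutely continuous, so that $\underline H$ extends continuously to $\overline{V(\xi)}$ with value $H_0$ at the corner. Your explicit construction of the boundary value as the convergent improper integral, with the $\varepsilon$-splitting to check its continuity, is just an unpacking of the paper's one-line ``absolutely continuous, uniformly in $(H_0,\phi_0)$'' step.
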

\begin{remark}
 This is true even though $\underline \phi$ itself does not extend to $\tau=x_0$.
\end{remark}

 \begin{proof}
   Consider $\underline \phi(\tau,H_0,\phi_0,x_0)$ given for $\tau\in [0,x_0)$. It is continuous on $V(\xi)$. 
Moreover,
$(\tau,x_0)\mapsto \underline H(\tau ,H_0,\phi_0,x_0)$ is absolutely continuous, uniformly in $(H_0,\phi_0)\in J\times \mathbb T$, see \eqref{underlineH}. Consequently, $\underline H$ extends continuously and uniquely to the closure  $\overline{V(\xi)}$. Moreover, $\underline H(0,H_0,\phi_0,x_0)=H_0$ by definition for all $x_0\in (0,\xi)$ and therefore also $H(0,H_0,\phi_0,0)=H_0$.
 \end{proof}
Let $h\in J_0\subset J$. We then consider the resulting equation
\begin{align}
 h = \underline H(x,H,\phi,x),\eqlab{heqn1}
\end{align}
with $(H,\phi,x)\mapsto \underline H(x,H,\phi,x)$ being  defined and continuous by \lemmaref{contH0} on $J\times \mathbb T\times [0,\xi]$. 
\begin{lemma}
The equation \eqref{heqn1} defines an invariant set $W^s(\Gamma_1(h))$ in the $(H,\phi,x)$-space. 
\end{lemma}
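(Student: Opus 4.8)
The plan is to exhibit the map $(H,\phi,x)\mapsto \underline H(x,H,\phi,x)$ as a conserved quantity of \eqref{HNphixeqns} — constant along every orbit segment lying in the domain of definition — and then to read off that its level sets, in particular the set cut out by \eqref{heqn1}, are invariant.

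First I would fix $h\in J_0$ with $\overline{J_0}\subset J$ and shrink $\xi>0$ so that, by the a priori estimate \eqref{Hphibound}, every forward orbit of \eqref{HNphixeqns} issuing from $J_0\times\mathbb T\times(0,\xi)$ stays inside $J\times\mathbb T\times(0,\xi)$ for all admissible times $\tau\in(0,x_0)$. Then, by \lemmaref{contH0}, the function $\mathcal H_\infty(H,\phi,x):=\underline H(x,H,\phi,x)$ is well defined and continuous on $J\times\mathbb T\times[0,\xi]$, with $\mathcal H_\infty(H,\phi,0)=H$. The set defined by \eqref{heqn1} is precisely the level set $\mathcal H_\infty^{-1}(h)$, and this is what we denote by $W^s(\Gamma_1(h))$.

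The heart of the matter is a cocycle identity. Given $p_0=(H_0,\phi_0,x_0)$ with $x_0\in(0,\xi)$ and $\tau\in(0,x_0)$, write $p_\tau=(\underline H(\tau,p_0),\underline\phi(\tau,p_0),x_0-\tau)$ for the point reached at time $\tau$ along the orbit of \eqref{HNphixeqns} through $p_0$. The $x$-components of the orbit through $p_0$ at time $\tau+s$ and of the orbit through $p_\tau$ at time $s$ both equal $x_0-\tau-s$, so uniqueness of solutions of \eqref{HNphixeqns} on $J\times\mathbb T\times(0,\xi)$ yields $\underline H(s,p_\tau)=\underline H(\tau+s,p_0)$ for all $s\in[0,x_0-\tau)$. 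Letting $s\uparrow x_0-\tau$ and invoking the continuous extension of \lemmaref{contH0} on both sides gives $\mathcal H_\infty(p_\tau)=\underline H(x_0-\tau,p_\tau)=\underline H(x_0,p_0)=\mathcal H_\infty(p_0)$. Running the same computation backward in $\tau$ (where $x$ increases but remains below $\xi$) shows that $\mathcal H_\infty$ is constant along the whole orbit segment through $p_0$ contained in the domain.

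Invariance of $W^s(\Gamma_1(h))=\mathcal H_\infty^{-1}(h)$ is then immediate: every point of the orbit segment through a point of $\mathcal H_\infty^{-1}(h)$ again lies in $\mathcal H_\infty^{-1}(h)$; moreover the trace of this set on $\{x=0\}$ is $\{H=h,\ x=0\}=\Gamma_1(h)$, and since $x\to 0$ along forward orbits while $\mathcal H_\infty$ records the limiting value of $H$, the forward orbit of any point of the set converges to $\Gamma_1(h)$, which justifies the name. The only delicate point — and the reason \lemmaref{contH0} is needed rather than mere continuity of the flow on $V(\xi)$ — is that an orbit reaches the boundary $x=0$ at the finite $\tau$-time $\tau=x_0$, where $\underline\phi$ fails to extend continuously; the argument nonetheless goes through because only the $H$-component is passed to the limit, and there the one-sided limit exists and is continuous by \lemmaref{contH0}.
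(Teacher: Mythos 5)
Your proof is correct and follows essentially the same route as the paper: the cocycle identity you derive from uniqueness of solutions is exactly the ``group property of the flow'' invoked in the paper, combined with the continuous extension of $\underline H$ from \lemmaref{contH0} to pass to the limit $\tau\to x_0^-$. Packaging this as constancy of $\mathcal H_\infty(H,\phi,x)=\underline H(x,H,\phi,x)$ along orbits and reading off invariance of its level set is only a cosmetic reformulation of the paper's argument.
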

\begin{proof} $(x_0,H_0,\phi_0)\in W^s(\Gamma_1(h))\Longrightarrow $
\begin{align*}
 h& = \lim_{\tau\rightarrow x_0^-}\underline H(\tau,H_0,\phi_0,x_0)\\
 &=\lim_{\tau\rightarrow (x_0-s)^-}\underline H(\tau,\underline H(s,H_0,\phi_0,x_0),\underline \phi(s,H_0,\phi_0,x_0),x_0-s)\\
 &=\underline H(x_0-s,\underline H(s,H_0,\phi_0,x_0),\underline \phi(s,H_0,\phi_0,x_0),x_0-s),
\end{align*}
using the group properties of the flow. Consequently, $(\underline H(s,H_0,\phi_0,x_0),\underline \phi(s,H_0,\phi_0,x_0),x_0-s)\in W^s(\Gamma_1(h))$ for all $s\in (0,x_0)$. 
\end{proof}
In terms of the time $s$ (say) used in \eqref{HNphixeqns0}, $\tau\rightarrow x_0^-$ corresponds to $s\rightarrow \infty$, and since $$\underline H(0,H_0,\phi_0,0)=\frac12 v^2+\frac{(r_1-1)^2}{2r_1^2},$$ recall \eqref{H0func}, see also \lemmaref{tildePsi} and \lemmaref{Iterative}, we conclude that $W^s(\Gamma_1(h))$ is the stable set of $\Gamma_1(h)$, $h\in J_0\subset (0,\frac12)$, as desired.
%

%

\begin{proposition}\proplab{this}
Consider \eqref{HNphixeqns} with $N\in \mathbb N$ fixed and suppose that there is an $M\in \mathbb N$ so that $\underline H$ extends as a $C^M$-smooth function to the closure $\overline{V(\xi)}$, recall \eqref{Vxi}, satisfying 
\begin{align}
 \frac{\partial}{\partial H_0} \underline H(0,H_0,\phi_0,0)=1.\eqlab{HH0}
\end{align}
Then for any $J_0\subset J$ there exists a $\xi>0$ such that the following holds. \eqref{heqn1} with $h\in J_0$ has a unique solution for $(H,\phi,x)\in J\times \mathbb T\times [0,\xi]$ of the following graph form 
 \begin{align}
  H = F(h,\phi,x),\eqlab{H0expr}
 \end{align}
with $F\in C^M$ on $J_0\times \mathbb T\times [0,\xi]$. 
\end{proposition}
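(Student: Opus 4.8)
The plan is to read \eqref{heqn1} as an equation solvable by the implicit function theorem in the variable $H$, with parameters $(h,\phi,x)$, exploiting the fact that at $x=0$ the equation degenerates to the trivial relation $H=h$. Concretely, I would introduce
\begin{align*}
 \Phi(h,H,\phi,x) := \underline H(x,H,\phi,x)-h,
\end{align*}
for $(h,H,\phi,x)\in J_0\times J\times\mathbb T\times[0,\xi]$, where $\underline H(x,H,\phi,x)$ denotes the value of the flow $\underline H(\tau,H_0,\phi_0,x_0)$ along the boundary face $\{\tau=x_0\}$ of $\overline{V(\xi)}$ with $(H_0,\phi_0,x_0)=(H,\phi,x)$. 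By the standing hypothesis of \propref{this}, the restriction of $\underline H$ to that face is $C^M$, hence so is $\Phi$, and solving \eqref{heqn1} amounts exactly to solving $\Phi=0$ for $H$ as a function of $(h,\phi,x)$.

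Next I would analyze the situation at $x=0$. By \lemmaref{contH0} we have $\underline H(0,H,\phi,0)=H$, so $\Phi(h,H,\phi,0)=H-h$; thus at $x=0$ the unique zero is $H=h$, which already forces $F(h,\phi,0)=h$. Differentiating in $H$ and using the assumption \eqref{HH0} (note $H$ enters only through the initial-condition slot $H_0$, so the chain rule produces no extra term at $x=0$),
\begin{align*}
 \left.\frac{\partial\Phi}{\partial H}\right|_{x=0} = \frac{\partial}{\partial H_0}\underline H(0,H,\phi,0)=1\ne0.
\end{align*}

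Then I would invoke the implicit function theorem with parameters $(h,\phi,x)$. Assuming, as we may (replacing $J_0$ by a relatively compact subinterval, which loses no generality since $J_0\subset J$ was arbitrary), that $\overline{J_0}\times\mathbb T$ is compact, the function $\partial_H\Phi$ is continuous and identically equal to $1$ on $\overline{J_0}\times\mathbb T\times\{0\}$, so after shrinking $\xi>0$ we obtain $\partial_H\Phi\ge\tfrac12$ throughout $J_0\times J\times\mathbb T\times[0,\xi]$. To handle the fact that the domain has a boundary at $x=0$, I would first extend the $C^M$ map $(H,\phi,x)\mapsto\underline H(x,H,\phi,x)$ to a $C^M$ map on an open neighborhood of $\overline{J_0}\times\mathbb T\times\{0\}$ (a Seeley-type extension in the $x$-variable), apply the classical $C^M$ implicit function theorem near each point $(h,h,\phi,0)$, and glue the resulting local graphs; compactness of $\overline{J_0}\times\mathbb T$ lets me pick one $\xi$, and global uniqueness on $J\times\mathbb T\times[0,\xi]$ follows because $H\mapsto\Phi(h,H,\phi,x)$ is strictly increasing there. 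This yields the desired $C^M$ function $H=F(h,\phi,x)$ on $J_0\times\mathbb T\times[0,\xi]$ with $F(h,\phi,0)=h$, and the uniqueness of the graph over $J\times\mathbb T\times[0,\xi]$, completing the proof.

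The only genuinely delicate point is the boundary $\{x=0\}$ of the domain: the implicit function theorem is usually stated over open sets, so one must either extend $\underline H$ across $x=0$ — which is legitimate precisely because it is assumed $C^M$ up to that boundary — or use a version of the theorem valid on manifolds with boundary. Everything else is supplied by ingredients already in hand: the $C^M$-regularity assumed in \propref{this}, the normalization \eqref{HH0}, the identity $\underline H(0,\cdot,\cdot,0)=\id$ from \lemmaref{contH0}, and the elementary fact that a $C^1$ function with positive $H$-derivative has at most one zero in $H$.
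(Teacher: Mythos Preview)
Your proof is correct and follows essentially the same approach as the paper: both apply the implicit function theorem to $\Phi(h,H,\phi,x)=\underline H(x,H,\phi,x)-h$, using $\underline H(0,H,\phi,0)=H$ and the hypothesis $\partial_{H_0}\underline H(0,H,\phi,0)=1$. The paper's proof is terser and does not spell out the boundary-extension issue or the compactness argument for a uniform $\xi$, but the underlying argument is the same.
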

\begin{proof}
 Follows directly from the implicit function theorem.  Indeed, we have
 \begin{align*}
  h = \underline H(0,h,\phi,0),\quad \frac{\partial}{\partial H_0}\underline H(0,h,\phi,0)=1,
 \end{align*}
for all $\phi\in \mathbb T$ by \eqref{HH0}, and the right hand side of \eqref{heqn1} is well-defined and $C^M$ with respect to $(H,\phi,x)\in J\times \mathbb T\times [0,x_0]$ by assumption. Consequently, we can solve \eqref{heqn1} for $H$ as a function of $h,\phi,x$. This gives \eqref{H0expr} and $F$ is $C^M$-smooth since $\underline H$ is so.
\end{proof}
Under the assumptions of \propref{this}, we then have the following by returning to $l=x^3$:
For any $h\in J_0$, \eqref{H0expr} parametrizes $W^s(\Gamma_1(h))$ locally in the $(H,\phi,l)$-space as a $C^M$-smooth graph $H=F(h,\phi,l^3)$, $\phi\in \mathbb T$, $l\in (0,\xi^{1/3})$. Now, by \lemmaref{tildePsi} and \lemmaref{Iterative}, it follows that $(r_1,v,l)\mapsto (H,\phi,l)$, with $l\in (0,\xi^{1/3})$ for $\xi>0$ small enough, is a smooth diffeomorphism (on the relevant set). Therefore we obtain a $C^M$ invariant manifold $W^s(\Gamma_1(h))$ as the stable set of $\Gamma_1(h)$ in the original $(r_1,v,l)$-space, as desired.

Consequently, in order to finish the proof of  \thmref{main2}, it suffices to verify the conditions of \propref{this} and to note that $M$ can be taken to be arbitrary (upon increasing $N$). We will show that we can take $M=\lfloor \frac{N}{2}\rfloor$. Here $\lfloor x\rfloor$ for $x\in \mathbb R$ is the floor function, i.e. $n=\lfloor x\rfloor$ is the largest integer such that $n\le x$. $\underline H$ clearly satisfies \eqref{HH0} once we have shown that it extends $C^M$-smoothly to $\overline{ V(\xi)}$.

Define
 \begin{align}\eqlab{znu}
  \underline z_{\mathbf \nu} := \frac{\partial^{\vert \mathbf \nu\vert} }{\partial \tau^{\nu_{1}} \partial H_0^{\nu_2}\partial \phi_0^{\nu_3} \partial x_0^{\nu_4} } \underline z,
 \end{align}
for $z=H,\phi,x$ and where $\mathbf \nu=(\nu_1,\nu_2,\nu_3,\nu_3)\in \mathbb N_0^4$, $\vert \nu\vert=\nu_1+\cdots +\nu_4\ge 0$. 
\begin{lemma}\lemmalab{est0}
Fix $N\in \mathbb N$ with $N\ge 3$ and let $M=\lfloor \frac{N}{2}\rfloor$. Then there exists a constant $C_N>0$ large enough, and a constant $\xi>0$ small enough such that 
\begin{align*}
 \vert \underline H_{\mathbf \nu} (\tau,H_0,\phi_0,x_0)\vert \le C_N,
\end{align*}
for $(\tau,H_0,\phi_0,x_0)\in \overline{ V(\xi)}$,
and all $\vert \nu\vert\le M$.
\end{lemma}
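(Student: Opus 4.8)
The plan is to prove the estimate by induction on the order $|\mathbf\nu|$, working directly from the integral representation \eqref{underlineH} together with the analogous integral equation for $\underline\phi$. The key structural fact to exploit is the asymmetry between the $H$-equation and the $\phi$-equation in \eqref{HNphixeqns}: the $H$-equation carries a factor $x$ in front, so its integrand $s\,[\cdots]$ is bounded (indeed small) near $s=0$, whereas the $\phi$-equation carries $x^{-2}$, so $\underline\phi$ blows up like $(x_0-\tau)^{-1}$ as $\tau\to x_0^-$. Differentiating the $\phi$-integral equation $\nu$ times in the parameters produces, by Faà di Bruno, terms with progressively worse singular factors $s^{-2-j}$, roughly $s^{-2-|\mathbf\nu|}$ in the worst case. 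The point is that each derivative of $\underline H$ that appears when we differentiate \eqref{underlineH} picks up, through the chain rule acting on the $H$-argument inside $[\cdots]$, at least one extra power of $s$ from the prefactor $x=s$ in the $H$-equation, and this extra power has to compensate against the singular powers coming from the $\phi$-derivatives. Counting powers carefully, differentiating $\underline H$ a total of $m\le M=\lfloor N/2\rfloor$ times leaves the integrand of the form $s^{a}\cdot(\text{bounded})$ with $a\ge -1+N-2(m-1)\ge N-2M+1\ge 1>-1$, so that the integral $\int_{x_0}^x s^a\,ds$ remains bounded on $\overline{V(\xi)}$. (This is exactly where $M=\lfloor N/2\rfloor$ comes from, and why one needs $N$ large to get $M$ large.)

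Concretely, I would first set up notation: let $\Phi_\nu$ collectively denote the finitely many multi-indexed partial derivatives of $\underline\phi$ of order $\le|\mathbf\nu|$, and record the a priori bound $|\underline\phi_{\mathbf\nu}(\tau,\cdot)|\le C\,(x_0-\tau)^{-1-k(\mathbf\nu)}$ for a suitable integer $k(\mathbf\nu)$ depending only on $|\mathbf\nu|$ — this secondary induction is itself carried out by differentiating the $\phi$-integral equation and using Grönwall, treating the $\underline H$-derivatives that appear as already-bounded by the main induction hypothesis. Then, differentiating \eqref{underlineH} $\mathbf\nu$ times, one obtains a Volterra-type equation
\begin{align*}
 \underline H_{\mathbf\nu}(\tau,H_0,\phi_0,x_0) = (\text{boundary/explicit terms}) - \frac{1}{3\delta}\int_{x_0}^{x} s\,\Big[ \partial_H[\cdots]\,\underline H_{\mathbf\nu}(x_0-s,\cdot) + \mathcal R_{\mathbf\nu}(s)\Big]\,ds,
\end{align*}
where $\mathcal R_{\mathbf\nu}$ collects all terms involving lower-order derivatives of $\underline H$ (bounded by induction) and derivatives of $\underline\phi$ up to order $|\mathbf\nu|$; using the secondary bound on $\Phi_\nu$ and the power count above, $|s\,\mathcal R_{\mathbf\nu}(s)|\le C_N\,s^{a}$ with $a>-1$, hence $\int_{x_0}^x s\,\mathcal R_{\mathbf\nu}(s)\,ds$ is bounded on $\overline{V(\xi)}$. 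The integral kernel multiplying $\underline H_{\mathbf\nu}$ is $s\cdot\partial_H[\cdots]$, which is bounded; so a Grönwall estimate closes the induction and yields $|\underline H_{\mathbf\nu}|\le C_N$ on $\overline{V(\xi)}$, shrinking $\xi$ as needed at each (finite) step.

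The main obstacle is the bookkeeping of the singular $\phi$-derivatives: one must show that the products of $\underline\phi$-derivatives arising from Faà di Bruno, weighted by the explicit $s$-powers present in \eqref{HNphixeqns} (the $x^3$ and $x^{N+?}$ in the $H$-equation and the $x^{-2}$, $x^{N-1+?}$ in the $\phi$-equation), never produce a non-integrable power $s^{-1}$ or worse once the total number of $H$-differentiations is $\le M=\lfloor N/2\rfloor$. I expect this to require organizing the induction by a weighted order that counts $\phi$-derivatives as "costing" more than $H$-derivatives, and verifying the base case $|\mathbf\nu|\le 1$ (which is essentially \lemmaref{H0phi0} and \eqref{Hphibound}) by hand. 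Once the power-counting lemma is in place, each inductive step is a routine Grönwall argument on a linear Volterra equation with bounded kernel and bounded inhomogeneity.
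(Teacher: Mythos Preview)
Your plan is close in spirit to the paper's proof (induction on $|\nu|$, Faà di Bruno, tracking the $\underline\phi$-singularity against the compensating powers $x^{N+1}$ and $x^{N-1}$), but it has one structural gap and misses the observation that actually closes the induction.

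\textbf{The gap: the ``main/secondary'' decoupling is circular.} You propose to run the main induction on $\underline H_\nu$ and a secondary one on $\underline\phi_\nu$, ``treating the $\underline H$-derivatives as already bounded by the main induction hypothesis''. This does not work at the top level: the $\phi$-equation has leading term $x^{-2}\Omega(H,x)$, so $\tfrac{d}{d\tau}\underline\phi_{\nu'}$ contains $\underline x^{-2}\,\partial_H\Omega\cdot\underline H_{\nu'}$; you cannot bound $\underline\phi_{\nu'}$ before $\underline H_{\nu'}$ at order $|\nu'|=n+1$. Conversely $\tfrac{d}{d\tau}\underline H_{\nu'}$ contains $\underline x^{N+1}\partial_\phi R\cdot\underline\phi_{\nu'}$. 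A naive Grönwall on either one separately, or on the pair, runs into a kernel of size $\underline x^{-2}$ in the $\phi$-equation, whose integral diverges.

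\textbf{What the paper does instead.} The paper proves, simultaneously and by induction on $|\nu|$, the pair of estimates
\[
|\underline H_\nu|\le C_\nu,\qquad |\underline\phi_\nu|\,|\tau-x_0|^{\,1+\nu_1+\nu_4}\le C_\nu,
\]
via a continuation/bootstrap: assume both bounds hold on $[0,\tau_0)$ with tentative constants, estimate the right-hand sides of the variational equations, and show the constants can be chosen so that the bounds extend to all $\tau<x_0$. The crucial observation that breaks the circularity is that the $\phi$-dependent feedback terms $D^{\nu'}(\underline x^{N+1}R)$ and $D^{\nu'}(\underline x^{N-1}P)$ carry a net \emph{positive} power of $\underline x$ after accounting for the $\underline\phi$-singularity (Faà di Bruno gives at worst $\underline x^{-(\nu_1+\nu_4)}$), so they vanish as $\xi\to0$. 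Hence by shrinking $\xi$, the constant $K$ in the estimate of $\tfrac{d}{d\tau}\underline H_{\nu'}$ and $\tfrac{d}{d\tau}\underline\phi_{\nu'}$ depends only on the $\phi$-independent pieces $\Lambda,\Omega$ --- not on the tentative constants $C_{H,\nu'},C_{\phi,\nu'}$. One can then simply integrate (no Grönwall needed) and choose $C_{\phi,\nu'}$ a posteriori.

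Two smaller points: (i) the precise blow-up rate is $|\underline\phi_\nu|\lesssim|\tau-x_0|^{-(1+\nu_1+\nu_4)}$, i.e.\ only $\tau$- and $x_0$-derivatives worsen the singularity; your $k(\nu)$ depending only on $|\nu|$ is too coarse for the Faà di Bruno count to come out cleanly. (ii) Your power-count formula $a\ge -1+N-2(m-1)$ is not quite the right bookkeeping; the paper's bound on $D^\nu(\underline x^{N+1}R)$ is $|\tau-x_0|^{\,N+1-\nu_1-\nu_4}$, which with $\nu_1+\nu_4\le|\nu|\le M=\lfloor N/2\rfloor$ is what keeps everything integrable.
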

\begin{proof}
The proof is delayed to the \appref{est}. It rests upon careful estimation of the higher order variational equations of \eqref{HNphixeqns}. The main difficulty lies in estimating $\underline \phi_{\mathbf \nu}$, due to the singular nature of \eqref{HNphixeqns} at $x=0$. We find that 
 \begin{align*}
  \vert \underline \phi_{\mathbf \nu} (\tau,H_0,\phi_0,x_0)\vert \vert \tau-x_0\vert^{1+\nu_1+\nu_4} \le C_N,
 \end{align*}
 for $(\tau,H_0,\phi_0,x_0)\in  V(\xi)$,
and all $\vert \nu\vert\le M$. Upon using that the $\phi$-dependent term in the $H$-equation has a $x^{N+1}$-factor, this allow us to control and extend (as in the proof of \lemmaref{contH0}) $\underline H_{\mathbf \nu}$ to the closure $\overline{V(\xi)}$ provided that $N> \nu_1+\nu_4$.
 
\end{proof}


\section{Blowing up the linearly damped Kepler problem}\seclab{blowup}
 In \secref{existence}, we characterized constant values of $\vert \mathcal E_\infty\vert \in (0,1)$ as a smooth cylinder in the $(r_1,v,l)$-space, see \thmref{main2}. $\mathcal E_\infty=0$, on the other hand, became a one-dimensional manifold, see \thmref{main1}. At the same time, since  $\vert \mathcal E_\infty\vert=1\Rightarrow L=0$, see \cite{margheri2017a}, $\vert \mathcal E_\infty\vert=1$ is the plane defined by $(r,v,0)$ in the $(r,v,l)$-space. $\vert \mathcal E_\infty\vert=0$ and $\vert \mathcal E_\infty\vert=1$ are therefore special cases where the associated invariant sets bifurcate. From \cite{margheri2017a,margheri2017ab}, it is also known that $\vert \mathcal E_\infty\vert\in [0,1]$ and that all values are attained in this set. 
In our characterization of $\vert \mathcal E_\infty\vert$ through the Hamiltonian function $H$, see \eqref{HvsMathcalE} and \eqref{Hinf}, this means the following:
\begin{lemma}\lemmalab{Hhge12}
The set $\Gamma_1(h)$ defined by $H(r_1,v)=h,\rho_1=0$ with $h\ge \frac12$, is not an $\omega$-limit set. 
\end{lemma}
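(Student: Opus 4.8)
The plan is to argue by contradiction: suppose $\Gamma_1(h)=\omega(p)$ for some point $p$ and some $h\ge\frac12$, and examine the values of $H$ along the forward orbit of $p$. I would distinguish whether or not $p$ lies in the invariant set $\{\rho_1=0\}$, on which — recall \lemmaref{Ham} — the flow reduces to the planar Hamiltonian system \eqref{r1veqns} with $H$ conserved.

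\emph{Case $\rho_1=0$.} Since $H$ is constant along the orbit through $p$ and $\omega(p)=\Gamma_1(h)\subset\{H=h\}$, necessarily $H(p)=h$, so $p$ itself lies on $\Gamma_1(h)$. But for $h\ge\frac12$ the level set $\{H(r_1,v)=h\}$ is a \emph{single} orbit, unbounded in $r_1$: for $h=\frac12$ this is \lemmaref{Ham}, and for $h>\frac12$ it follows from the same elementary computation (solving $v^2=2h-(1-r_1^{-1})^2$ for $r_1\ge(1+\sqrt{2h})^{-1}$). Along this orbit $r_1(t)\to+\infty$ as $t\to+\infty$, so $\omega(p)=\omega(\Gamma_1(h))$ is empty in the finite chart — and contained in $\{r_1=\infty\}$ after compactification — hence $\omega(p)\ne\Gamma_1(h)$, a contradiction.

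\emph{Case $\rho_1\ne0$, i.e.\ $l>0$ along the orbit.} By \eqref{HvsMathcalE}, $H(\underline r_1(t),\underline v(t))=\tfrac12|\mathcal E(u(t),\dot u(t))|^2$, which by \cite{margheri2017a} converges as $t\to\infty$ to $\tfrac12|\mathcal E_\infty(u,\dot u)|^2=H_\infty(p)$, recall \eqref{Hinf}. On the other hand, $\Gamma_1(h)=\omega(p)$ contains a point of $\{H=h\}$, so there is a sequence $t_n\to\infty$ with $H(\underline r_1(t_n),\underline v(t_n))\to h$ by continuity of $H$; since the full limit exists, the two values agree, $H_\infty(p)=h$. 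As $|\mathcal E_\infty|\le1$ by \cite{margheri2017a}, this forces $h\le\tfrac12$, hence $h=\tfrac12$ and $|\mathcal E_\infty(u,\dot u)|=1$; but then $L=0$ by \cite{margheri2017a}, i.e.\ $l=0$ along the orbit, contradicting $\rho_1\ne0$. This exhausts all cases.

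The point needing care is the structural input used in the first case: for $h\ge\frac12$ one must recognize $\{H(r_1,v)=h\}$ as a single non-periodic orbit escaping to $r_1=\infty$ (the separatrix for $h=\frac12$, the ``hyperbolic'' branch for $h>\frac12$), so that its $\omega$-limit set is a proper piece at infinity and not the curve itself. In the compactified picture of \secref{blowup}, $\Gamma_1(h)$ together with the relevant equilibria at infinity forms a heteroclinic set, which a priori could be an $\omega$-limit set; the argument above rules this out by pinning the limiting value of $H$ to $h$ and invoking $|\mathcal E_\infty|\le1$. Everything else is continuity of $H$ and conservation of $H$ on $\{\rho_1=0\}$.
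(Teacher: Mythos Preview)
Your proof is correct and follows essentially the same route as the paper: for a point with $\rho_1\ne 0$ (i.e.\ $l>0$), you identify $H_\infty$ with $\tfrac12|\mathcal E_\infty|^2$ via \eqref{Hinf} and invoke the two facts from \cite{margheri2017a} that $|\mathcal E_\infty|\le 1$ and $|\mathcal E_\infty|=1\Rightarrow L=0$, exactly as the paper does. The only difference is that you spell out the boundary case $\rho_1(p)=0$ separately (using that $\Gamma_1(h)$ is a single unbounded orbit of the planar Hamiltonian system for $h\ge\tfrac12$), which the paper leaves implicit---it treats the lemma as a statement about physical orbits and simply records it as a corollary of \cite{margheri2017a}.
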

\begin{proof}
 $H(r_1,v)>\frac12$ is obvious since then $\vert \mathcal E_\infty\vert>1$, which would contradict \cite{margheri2017a}. Moreover, although $H(r_1,v)=\frac12$ corresponds to $\vert \mathcal E_\infty\vert =1$ it cannot be an $\omega$-limit set either, because of $\vert \mathcal E_\infty\vert =1\Rightarrow L=0$, cf. \cite{margheri2017a}, which is an invariant set (that does not contain $\Gamma_1(h)$). 
\end{proof}
We obtained this result as a corollary of \cite{margheri2017a}. It cannot be understood directly from the perspective in \secref{existence}. However, in this section, we will perform a thorough geometric description of the dynamics of \eqref{keplerd} within the orbital plane, i.e. $u=re^{i\theta}$, by using the blowup method (as well as desingularization and compactification). In this way, we obtain a system where all singularities have associated eigenvalues with nonzero real part (with the exception of $q$ which only has imaginary eigenvalues), and this allows us to interpret \lemmaref{Hhge12} in a separate geometric way, which will also shed light on $\vert \mathcal E_\infty\vert\rightarrow 1$ (corresponding to $h\rightarrow \frac12$ cf. \eqref{Hinf}), see also \secref{disc}. 

Our starting point for our blowup approach is to use the coordinates $(r,v,l)$, recall \eqref{r1veqn} and \eqref{leqn}. This produces the following system
\begin{equation}\eqlab{rvl}
\begin{aligned}
 \dot r &= \frac{v}{l},\\
 \dot v &=\frac{l^3}{r^3}-\frac{l}{r^2}-2\delta v,\\
 \dot l &=-\delta l. 
\end{aligned}
\end{equation}
We now define a reparametrization of time, corresponding to multiplication of the right hand side of \eqref{rvl} by $l r^3$:
\begin{equation}\eqlab{rvl2}
\begin{aligned}
 r' &= vr^3,\\
 v' &=l\left(l^3-rl-2\delta r^3 v\right),\\
 l' &=-\delta r^3 l^2.
\end{aligned}
\end{equation}
\eqref{rvl} and \eqref{rvl2} are equivalent for $r>0$, $l>0$, but \eqref{rvl2} has the advantage of being well-defined on $r=0,l=0$. This enables the use of dynamical systems theory to infer properties from $r=0,l=0$, to $r>0,l>0$ for \eqref{rvl2} and therefore also \eqref{rvl}.

\begin{figure}[h!]
 	\begin{center}
 		{\includegraphics[width=.45\textwidth]{./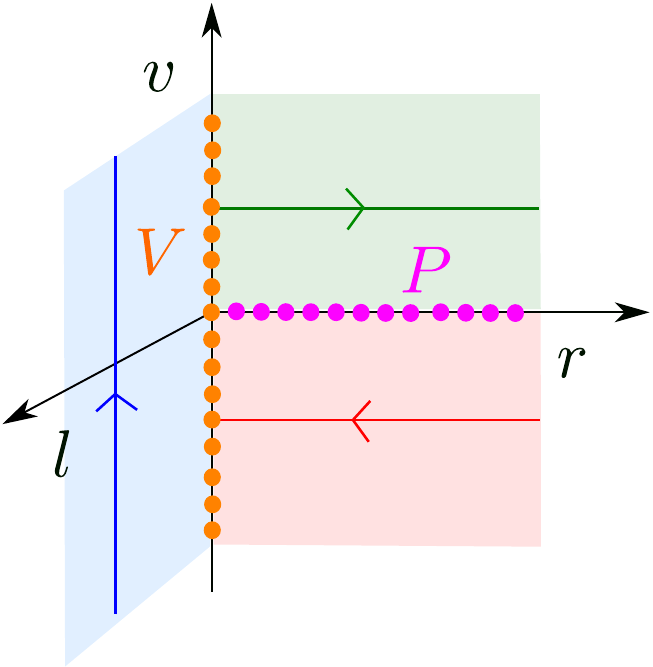}}
 		\caption{Dynamics of \eqref{rvl2}. $V$ and $P$ are sets of degenerate equilibria.}\figlab{rvl}
 	\end{center}
 \end{figure}

The system \eqref{rvl2} has two invariant planes: $\{r=0\}$ where
\begin{align*}
 v' &=l^4,\\
 l' &=0,
\end{align*}
and $\{l=0\}$ where
\begin{align*}
 r' &=vr^3,\\
 v' &=0,
\end{align*}
see \figref{rvl}. Their intersection $V=\{r=l=0\}$ and $P=\{v=l=0\}$ consist entirely of completely degenerate equilibria $v'=0$, insofar that the linearization about any point in $V$ or $P$ has only zero eigenvalues. We therefore apply the blowup method, see \cite{dumortier2006a,dumortier_1996}. We proceed as follows:
First, we blow up $V$ by application of the following cylindrical blowup transformation 
\begin{align}\eqlab{phiV}
\Phi^V:\quad  \rho\ge 0,(\bar r,\bar l)\in S^1\mapsto \begin{cases}
                                          r &= \rho^2 \bar r,\\
                                          l &=\rho \bar l,
                                         \end{cases}
\end{align}
which fixes $v$. See \figref{blowupV}. Let $X$ denote the vector-field in \eqref{rvl2}. Then the blowup weights are chosen so that $\overline X=\Phi^V_{*} X$ has $\rho^4$ as a common factor. It is therefore the desingularized vector-field $\rho^{-4}\overline X$, that we study in the following. To perform calculations, we work in two separate charts, that we will denote by $(\bar l=1)_1$ and $(\bar r=1)_2$, respectively, with chart-specific coordinates $(r_1,v,\rho_1)$ and $(\rho_2,v,l_2)$ defined by
\begin{align}
 (\bar l=1)_1:\quad \begin{cases}
                     r&=\rho_1^2 r_1,\\
                     l&=\rho_1,
                    \end{cases}\eqlab{c1}\\
 (\bar r=1)_2:\quad \begin{cases}
                     r&=\rho_2^2,\\
                     l&=\rho_2 l_2.
                    \end{cases}\eqlab{c2}                  
\end{align}
See also \figref{blowupV} for an illustration of these coordinates.
The following expressions 
\begin{align*}
 \rho_2 = \rho_1 \sqrt{r_1},\quad l_2 = 1/\sqrt{r_1},
\end{align*}
define the smooth change of coordinates
for $\rho_1\ge 0,r_1>0$. 
We will achieve the desingularization through division of the local vector-fields by $\rho_1^4$ and $\rho_2^4$, respectively.
\begin{remark}
  We used the $r_1$-coordinate of \eqref{c1} in \secref{existence}, see \eqref{r1veqn}, to prove \thmref{main1} and \thmref{main2}. The coordinates in the $(\bar r=1)_2$-chart allow us to follow and analyze the unbounded orbits $\Gamma(h)$ for $h\ge \frac12$. 
\end{remark}

\begin{figure}[h!]
 	\begin{center}
 		{\includegraphics[width=.9\textwidth]{./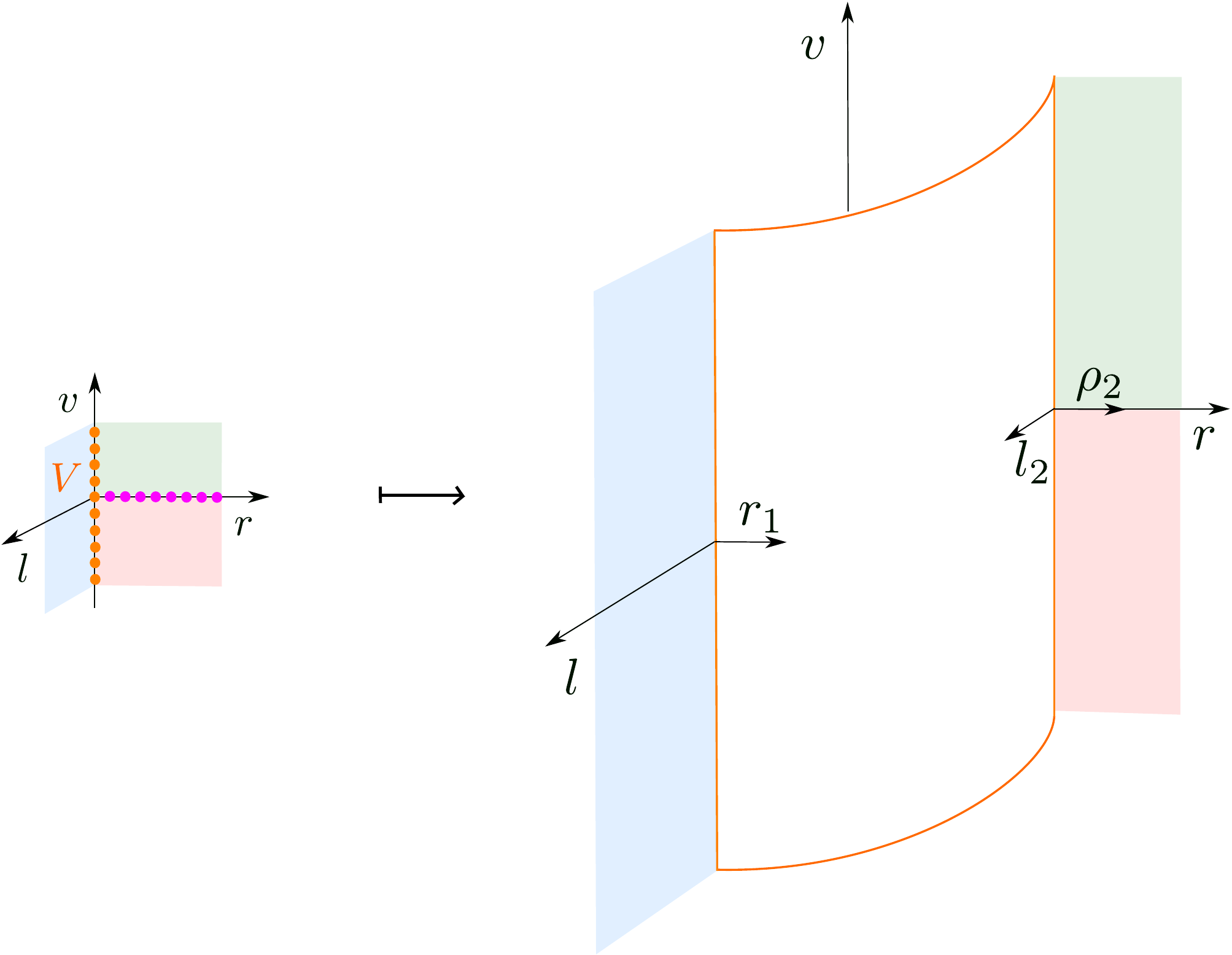}}
 		\caption{Illustration of the cylindrical blowup of the degenerate set $V$, see \eqref{phiV}.}\figlab{blowupV}
 	\end{center}
 \end{figure}

Subsequently, in the $(\bar r=1)_2$-chart, we will find that the set $P_2$ defined by
\begin{align*}
 v_2=0,\,l_2=0,
\end{align*}
and $\rho_2\ge 0$,
is a set of completely degenerate equilibria; this set clearly corresponds to $P$. We therefore blowup $P_2$ by application of the following cylindrical blowup transformation:
\begin{align}\eqlab{phiR2}
\Phi^{P_2}:\quad  \mu\ge 0,(\bar v,\bar l_2)\in S^1\mapsto \begin{cases}
                                          v &= \mu \bar v,\\
                                          l &=\mu \bar l_2,
                                         \end{cases}
\end{align}
leaving $\rho_2$ fixed. See \figref{blowupVR}. Let $X_2$ denote the local vector-field in the $(\bar r=1)_2$-chart. Then $\overline X_2=\Phi^{P_2}_* X_2$ has $\mu$ as a common factor. It is therefore the desingularized vector-field $\mu^{-1}\overline X_2$, that we study in the following. To perform calculations, we work in three separate charts $(\bar r=1,\bar l_2=1)_{21}$, $(\bar r=1,\bar v=1)_{22}$ and $(\bar r=1,\bar v=-1)_{23}$ with chart-specific coordinates $(\rho_2,v_1,\mu_1)$, $(\rho_2,\mu_2,l_{22})$ and $(\rho_2,\mu_3,l_{23})$ defined by
\begin{align}
(\bar r=1,\bar l_2=1)_{21}&:\quad \begin{cases}
                     v&=\mu_1 v_1,\\
                     l_2&=\mu_1,
                    \end{cases}\eqlab{c21}\\
 (\bar r=1,\bar v=1)_{22}&:\quad \begin{cases}
                     v&=\mu_2,\\
                     l_2&=\mu_2 l_{22},
                    \end{cases}   \eqlab{c22} \\
                    (\bar r=1,\bar v=-1)_{23}&:\quad \begin{cases}
                     v&=-\mu_3,\\
                     l_2&=\mu_3 l_{23}.
                    \end{cases} \eqlab{c23}
\end{align}
See also \figref{blowupVR} for an illustration of these coordinates. 
The change of coordinates between $(\bar r=1,\bar l_2=1)_{21}$ and $(\bar r=1,\bar v=1)_{22}$ is given by 
\begin{align}
 \mu_2=\mu_1 v_1,\quad l_{22}=v_1^{-1},\eqlab{cc2122}
\end{align}
for $\mu_1\ge 0$, $v_1>0$. Similarly, between $(\bar r=1,\bar l_2=1)_{21}$ and $(\bar r=1,\bar v=-1)_{22}$ we have
\begin{align}
 \mu_3=\mu_1 (-v_1),\quad l_{23}=-v_1^{-1},\eqlab{cc2223}
\end{align}
for $v_1<0$. 
We will achieve the desingularization in each of the charts through division of the local vector-fields by $\mu_i$, $i=1,2,3$, respectively.

\begin{remark}\remlab{v1vsrt0}
 Using \eqref{c2} and \eqref{r1veqn}, we have that $v_1$ in \eqref{c21} can be written in terms of $r$ and $\dot r$ as follows:
 \begin{align}
v_1 = \rho_2 \dot r.\eqlab{v1vsrt}
 \end{align}
The equations, we obtain in the $(\bar r=1,\bar l_2=1)_{21}$-chart, see \secref{c21} and \eqref{c21eqns}, are therefore equivalent to \eqref{rttlt}
on $r=\rho_2^2>0$. However, the factor of $\rho_2$ in  \eqref{v1vsrt} induces a compactification of $\dot r$. See also \remref{v1vsrt}.
\end{remark}

\begin{figure}[h!]
 	\begin{center}
 		{\includegraphics[width=.9\textwidth]{./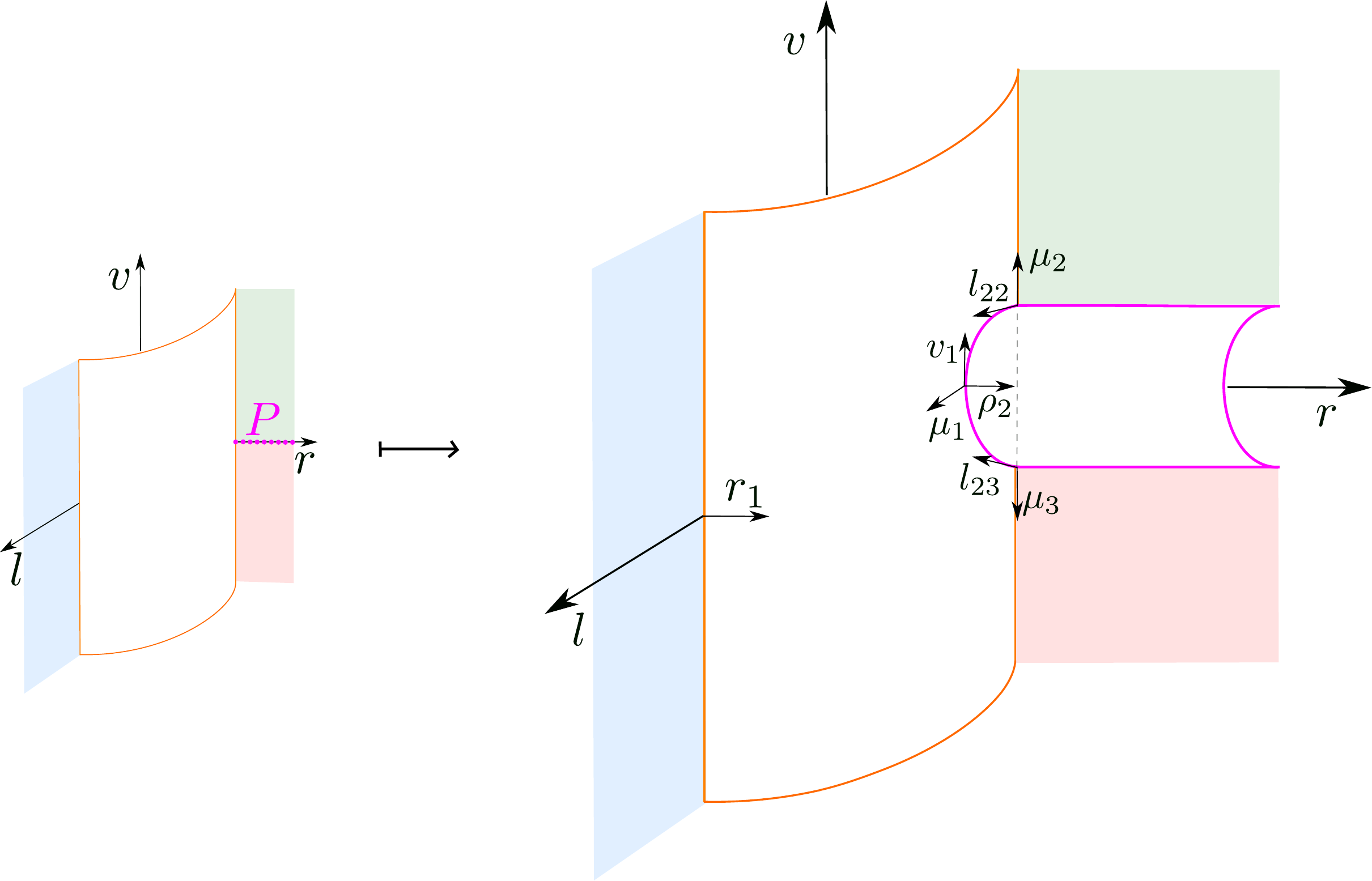}}
 		\caption{Illustration of the cylindrical blowup of the degenerate set $P_2$, see \eqref{phiR2}. }\figlab{blowupVR}
 	\end{center}
 \end{figure}
 
As we will see, orbits in $(\bar r=1,\bar v=1)_{22}$ go unbounded (with $\rho_2\rightarrow \infty$). We will therefore need to compactify the space $(\rho_1,\mu_2,l_{22})$. It turns out that the most convinient way to do this is as follows:
\begin{align}
 (\nu,l_{222})\mapsto \begin{cases}
                             \rho_2 &= \nu^{-1},\\
                             l_{22} &=\nu^3 l_{222},
                            \end{cases}\eqlab{c221}
\end{align}
with $\nu \ge 0,l_{222}\ge 0$, leaving $\mu_2$ fixed.
In this way, $\nu=0$ corresponds to $\rho_2=\infty $ (or $r=\infty$ by \eqref{c1}) and $l_{22}=0$. 
The latter property may seem unnatural, but upon using \eqref{cc2122}, we may realize that it leads to the following  compactification of the $(\rho_2,v_1,\mu_1)$-space associated with the $(\bar r=1,\bar l_2=1)_{21}$-chart:
\begin{align}
 (\nu,v_{11})\mapsto \begin{cases}
                             \rho_2 &= \nu^{-1},\\
                             v_{1} &=\nu^{-3} v_{11},\\
                              \mu_1 &=\nu^3 \mu_{11}
                            \end{cases}\eqlab{cv11}
\end{align}
where $v_{11}=l_{222}^{-1}$, $\mu_{11}=\mu_2 l_{222}$. Using \eqref{cc2223}, we then also obtain the following compactification in the $(\bar r=1,\bar v=-1)_{23}$-chart:
\begin{align}
 (\nu,l_{23})\mapsto \begin{cases}
                             \rho_2 &= \nu^{-1},\\
                             l_{23} &=\nu^{3} l_{233}
                            \end{cases}\eqlab{c233}
\end{align}
with $\nu\ge 0,l_{233}\ge 0$, leaving $\mu_3$ fixed.

We present the final geometric picture of our compactified phase space in a schematic way in \figref{full}. This figure also illustrates the coordinates used at $r=\infty$. 

In the following section, we study the dynamics in each of the charts. 

\begin{remark}
 In fairness, orbits within $\{r=0\}$ are also unbounded in the $v$-direction for $l>0$, see \figref{rvl}, and from this perspective, it is also desirable to compactify the $v$-direction. However, for simplicity we have chosen not to include this. 
\end{remark}

\begin{figure}[h!]
 	\begin{center}
 		{\includegraphics[width=.9\textwidth]{./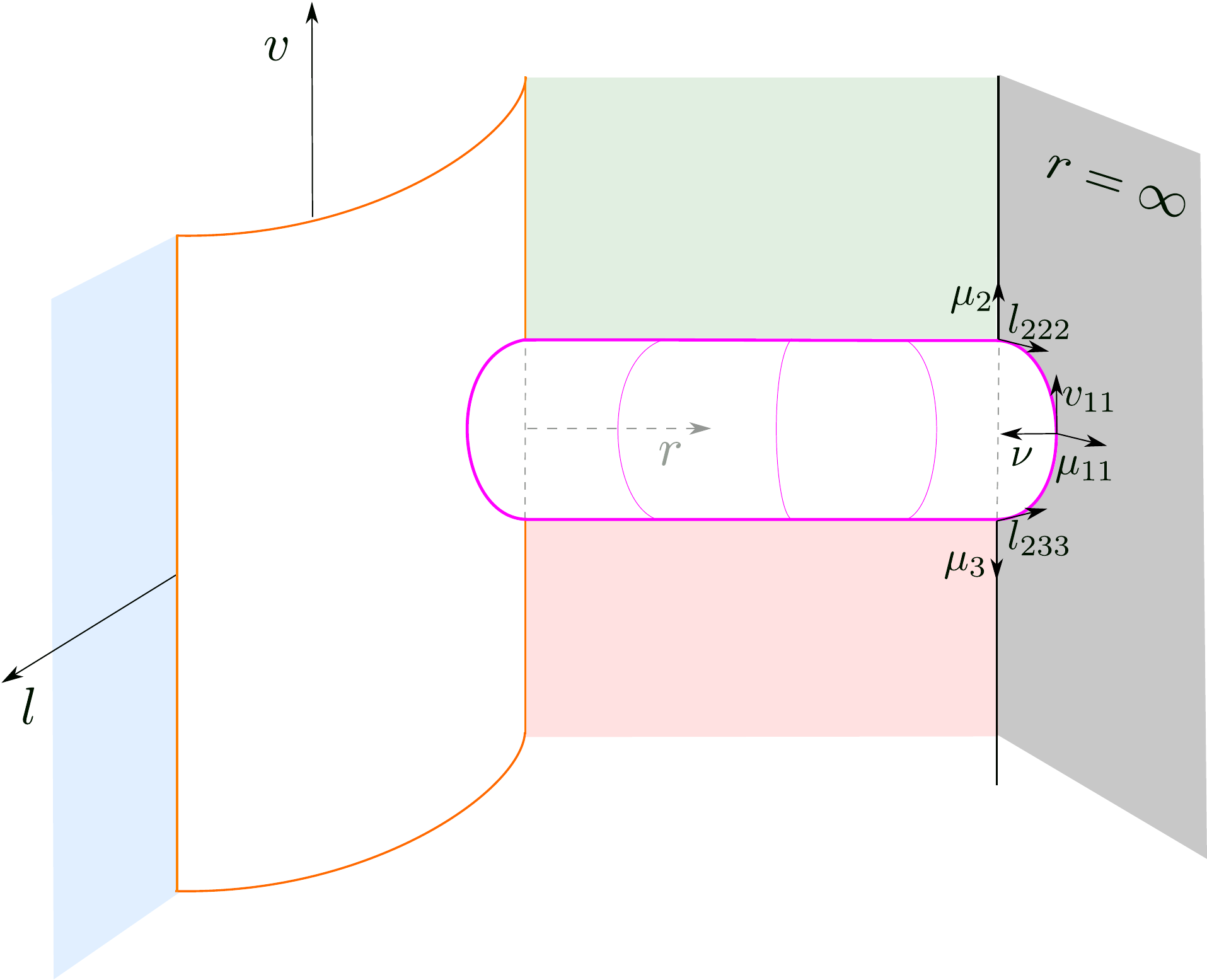}}
 		\caption{Illustration of the full blown up and compactified system. We indicate the coordinates used at $r=\infty$, see also \eqref{c221}, \eqref{cv11},\eqref{c233}. }\figlab{full}
 	\end{center}
 \end{figure}

\subsection{Chart $(\bar l=1)_1$}
By inserting \eqref{c1} into \eqref{rvl2}, we obtain
\begin{equation}\eqlab{l1eqns20}
\begin{aligned}
 r_1' &=vr_1^3 +2\delta r_1^4 \rho_1^3,\\
 v' &= -{r_1+1}-2\delta vr_1^3 \rho_1^3,\\
  \rho_1' &=-\delta r_1^3 \rho_1^4,
\end{aligned}
\end{equation}
after desingularization, corresponding to division of the right hand side by $\rho_1^4$. Setting $\rho_1=l$ and dividing the right hand side by $r_1^3$, we obtain \eqref{l1eqns2}. Consequently, within $\rho_1=0$, we have the Hamiltonian system with Hamiltonian function $H(r_1,v)=\frac12 v^2 +\frac{(r_1-1)^2}{2r_1^2}$, recall \eqref{H0func}, having periodic orbits $\Gamma_1(h)$, $h\in \left(0,\frac12\right)$ within $\rho_1=0$, surrounding the center $(r_1,v)=(1,0)$. \thmref{main1} and \thmref{main2} gave the existence of stable manifolds of $(r_1,v,\rho_1)=(1,0,0)$ and $\Gamma_1(h)$, $h\in \left(0,\frac12\right)$. The orbit $\Gamma_1\!\left(\frac12\right)$, defined by $H(r_1,v)=\frac12$ within $\rho_1=0$, is a separatrix, separating bounded (periodic) orbits from the unbounded ones ($H(r_1,v)\ge \frac12$), see \lemmaref{Ham} and \figref{r1v}.
\subsection{Chart $(\bar r=1)_2$}
By inserting \eqref{c2} into \eqref{rvl2}, we obtain the equations:
\begin{equation}\eqlab{eqnsrho2vl2}
\begin{aligned}
  \rho_2' &= \frac12 \rho_2 v,\\
 v' &=l_2^2(l_2^2-1)-2\delta \rho_2^3 v l_2,\\
 l_2' &=-\frac12 l_2 \left(v+2\delta \rho_2^3 l_2\right),
\end{aligned}
\end{equation}
after desingularization (corresponding to division of the right hand side by $\rho_2^4$).
Here we have two invariant planes defined by $\rho_2=0$ and $l_2=0$.
Within the former, we rediscover the center at $(v,l_2)=(0,1)$ and the periodic orbits $\Gamma_2(h)$, $h\in \left(0,\frac12\right)$, given by 
\begin{align}\eqlab{H2}
 H_2(l_2,v):=H(l_2^{-2},v) =  \frac12 v^2 +\frac12 -l_2^2+\frac12 l_2^4=h,
\end{align}
surrounding the center.
However, $\Gamma_1\!\left(\frac12\right)$ now becomes a bounded orbit $\Gamma_2\!\left(\frac12\right)$, which is homoclinic to the degenerate point $\gamma_2$ defined by $(\rho_2,v,l_2)=(0,0,0)$, see \figref{rho2vl2}. All other points $(0,v,0)$ on the $v$-axis, are partially hyperbolic, the linearization having eigenvalues $\pm \frac12$, $0$. All orbits $H_2(l_2,v)=h$ with $h>\frac12$ are heteroclinic connections within $\rho_2=0$ between points on the $v$-axis. Next, within $l_2=0$ we have 
\begin{align*}
 \rho_2' &= \frac12 \rho_2 v,\\
  v' &=0.
\end{align*}
The $\rho_2$-axis is the set of degenerate equilibria $P_2$, which is blown up by \eqref{phiR2}. Notice that $H_2(0,0)=\frac12 $, and since $H_2$ is independent of $\rho_2$, we conclude using \eqref{Hinf} that 
\begin{lemma}\lemmalab{EinfEq1onP2}
$\vert \mathcal E_\infty\vert=1$ on $P_2$.  
\end{lemma}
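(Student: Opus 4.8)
The plan is to use that, in the $(\bar r=1)_2$-chart, the function $H_2$ of \eqref{H2} is just the Hamiltonian $H$ of the earlier sections written in chart variables, together with the fact that $P_2$ is a set of equilibria on which $H_2$ is constant and equal to $\tfrac12$; the value $|\mathcal E_\infty|=1$ then drops out of \eqref{HvsMathcalE} and \eqref{Hinf}.

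First I would record, using \eqref{c2} and \eqref{r1veqn}, that $r_1=l^{-2}r=l_2^{-2}$ in this chart, so that $H_2(l_2,v)=H(l_2^{-2},v)=H(r_1,v)$. By \eqref{HvsMathcalE}, along every orbit of the desingularized system \eqref{eqnsrho2vl2} lying in the physical region $l>0$ (equivalently $\rho_2>0$, $l_2>0$) one has $H_2=\tfrac12|\mathcal E(u,\dot u)|^2$, and letting $t\to\infty$ and invoking \eqref{Hinf} gives $\lim_{t\to\infty}H_2=\tfrac12|\mathcal E_\infty|^2$. Next I would note directly from \eqref{eqnsrho2vl2} that every point of $P_2=\{v=0,\ l_2=0,\ \rho_2\ge0\}$ is an equilibrium: on $v=l_2=0$ one reads off $\rho_2'=\tfrac12\rho_2v=0$, $v'=l_2^2(l_2^2-1)-2\delta\rho_2^3vl_2=0$ and $l_2'=-\tfrac12 l_2(v+2\delta\rho_2^3l_2)=0$. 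Hence the orbit through any point of $P_2$ is constant; moreover, since \eqref{H2} shows $H_2$ is independent of $\rho_2$ and $H_2(0,0)=\tfrac12$, we get $H_2\equiv\tfrac12$ on all of $P_2$.

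It remains to transport the identity $\tfrac12|\mathcal E_\infty|^2=\lim_{t\to\infty}H_2$ from $\{l>0\}$ onto the blowup locus $P_2\subset\{l=0\}$, and this is the only (mild) obstacle. The cleanest resolution is to observe that $\{l=0\}$ corresponds to $L=0$, on which $\mathcal E=-u/|u|$ has constant norm $1$, so that $|\mathcal E_\infty|=1$ there directly (cf. \cite{margheri2017a}); the computation $H_2\equiv\tfrac12$ on $P_2$ is then simply the consistent value of $\tfrac12|\mathcal E|^2$ in the limit $r_1\to\infty$. Alternatively, one invokes continuity of $H_2$ up to $\rho_2=0$ and $l_2=0$ together with \eqref{Hinf} applied to orbits with $l>0$ converging onto $P_2$. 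Either way the lemma reduces to the elementary evaluation $H_2(0,0)=\tfrac12$ already noted in the text, and we conclude $|\mathcal E_\infty|=1$ on $P_2$.
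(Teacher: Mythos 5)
Your proposal is correct, and its core is the same as the paper's: the paper's entire argument is the observation that $H_2(0,0)=\tfrac12$, that $H_2$ in \eqref{H2} is independent of $\rho_2$, and that \eqref{Hinf} then identifies this value with $\tfrac12\vert\mathcal E_\infty\vert^2$. You reproduce exactly this computation (including the identification $r_1=l_2^{-2}$ and the fact that $P_2$ consists of equilibria of \eqref{eqnsrho2vl2}), so the route is essentially the paper's. What you add, and what the paper glosses over by simply writing ``using \eqref{Hinf}'', is an explicit justification for evaluating at $l=0$, where \eqref{HvsMathcalE} and \eqref{Hinf} were derived only for $l>0$: your primary resolution, that $P_2$ corresponds to $L=0$ states on which $\mathcal E=-u/\vert u\vert$ has constant norm $1$ (so $\vert\mathcal E_\infty\vert=1$ directly, consistent with \cite{margheri2017a}), is clean and is the right way to close this gap. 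Your alternative ``continuity'' argument is the weaker option: no orbit with $l>0$ actually has its $\omega$-limit in $P_2$ (such orbits only pass near $P_2$ and end up on some $\Gamma_1(h)$, $h<\tfrac12$, or on $W^s(q_1)$), and the statement that $H_\infty$ of nearby $l>0$ initial data tends to $\tfrac12$ is essentially the delicate $\vert\mathcal E_\infty\vert\to 1$ issue deferred to the discussion section, so it should not be used as the justification here; stick with the direct $L=0$ observation.
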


\begin{figure}[h!]
 	\begin{center}
 		{\includegraphics[width=.69\textwidth]{./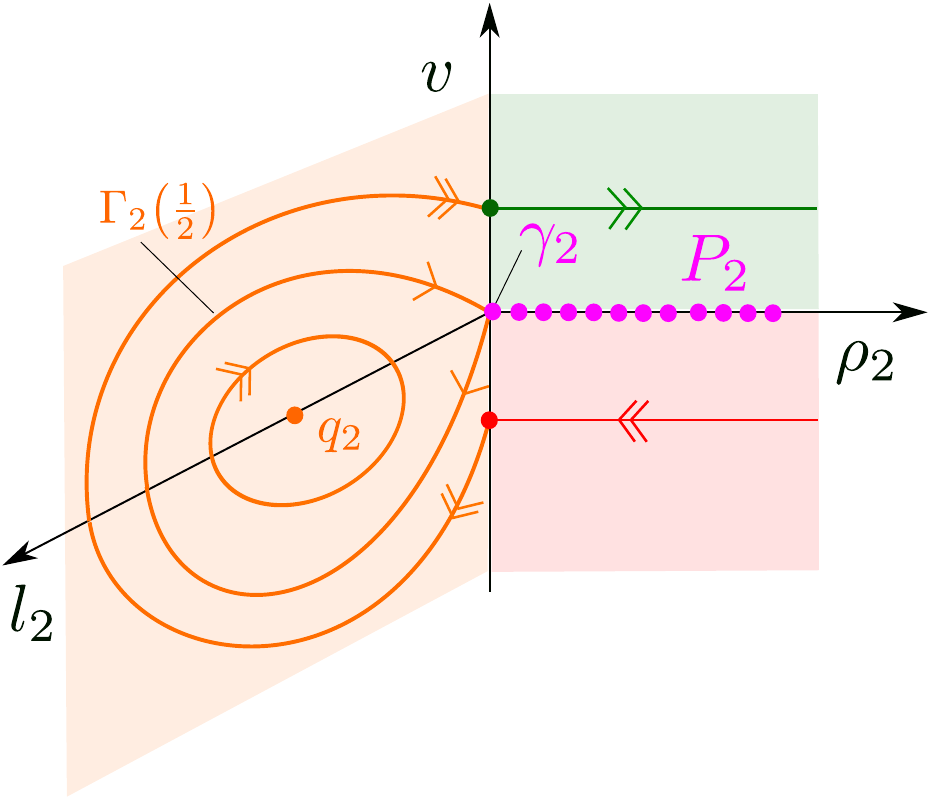}}
 		\caption{Dynamics in the $(\bar r=1)_2$-chart. Here the special orbit $\Gamma_1\!\left(\frac12\right)$ in the $(\bar l=1)_1$-chart, becomes a homoclinic orbit $\Gamma_2\!\left(\frac12\right)$ to a degenerate equilibrium $\gamma_2$.  }\figlab{rho2vl2}
 	\end{center}
 \end{figure}
 
\subsection{Chart $(\bar r=1,\bar l=1)_{21}$}\seclab{c21}
By inserting \eqref{c21} into \eqref{eqnsrho2vl2}, we obtain the following equations
\begin{equation}\eqlab{c21eqns}
\begin{aligned}
 \rho_2' &=\frac12 \rho_2 v_1,\\
 v_1' &=\mu_1^2+\frac12 v_1^2-1-\delta \rho_2^3v_1,\\
 \mu_1'&=-\left(\frac12 v_1+\delta \rho_2^3\right)\mu_1,
\end{aligned}
\end{equation}
after desingularization (corresponding to division of the right hand side by $\mu_1$). 
$\mu_1=0$ corresponds to the blowup of $P_2$, and within this invariant subspace, we have that
\begin{equation}\eqlab{rho2v1eqns}
\begin{aligned}
  \rho_2' &= \frac12 \rho_2 v_1,\\
 v_1' &=\frac12 v_1^2-1-\delta \rho_2^3v_1.
\end{aligned}
\end{equation}
We have two hyperbolic equilibria $\gamma_{21}^\pm$ along $\rho_2=0$ given by $v_1=\pm \sqrt{2}$, the former is an unstable node while the latter is a stable node. As equilibria points $(0,\pm \sqrt{2},0)$ of the full system, they are hyperbolic saddles and the separatrix $\Gamma_1\!\left(\frac12\right)$ from $(\bar r=1)_1$, now denoted by $\Gamma_{21}\!\left(\frac12\right)$ and given by $H_2(\mu_1,\mu_1 v_1)=\frac12$ within $\rho_2=0$, is now a heteroclinic orbit connecting the two hyperbolic saddles. We summarize the findings in \figref{rho2mu1v1}.

\begin{remark} \remlab{v1vsrt}
Following \remref{v1vsrt0}, \eqref{rho2v1eqns} is equivalent to \eqref{rttlt} with $l=0$:
\begin{align*}
 \ddot r = -\delta \dot r -\frac{1}{r^2}.
\end{align*}
In particular, setting $v_1=\rho_2 u$ with $u=\dot r$ and $\rho_2=\sqrt{r}$ transforms \eqref{rho2v1eqns} into
\begin{align*}
  r' &=ur^2,\\
 u'&=-\delta u r^2 -1,
\end{align*}
which is studied in \cite[Proposition 3.1]{margheri2017a}. The advantage of working with \eqref{rho2v1eqns} is that the $u$-axis becomes compactified. In particular, the heteroclinic orbits, connecting $\gamma_{21}^\pm$ within $\mu_1=0$, see \figref{rho2mu1v1}, (called ejection-collision orbits in \cite{margheri2014a}) become unbounded in the $(r,u=\dot r)$-coordinates. 
\end{remark}

\begin{figure}[h!]
 	\begin{center}
 		{\includegraphics[width=.69\textwidth]{./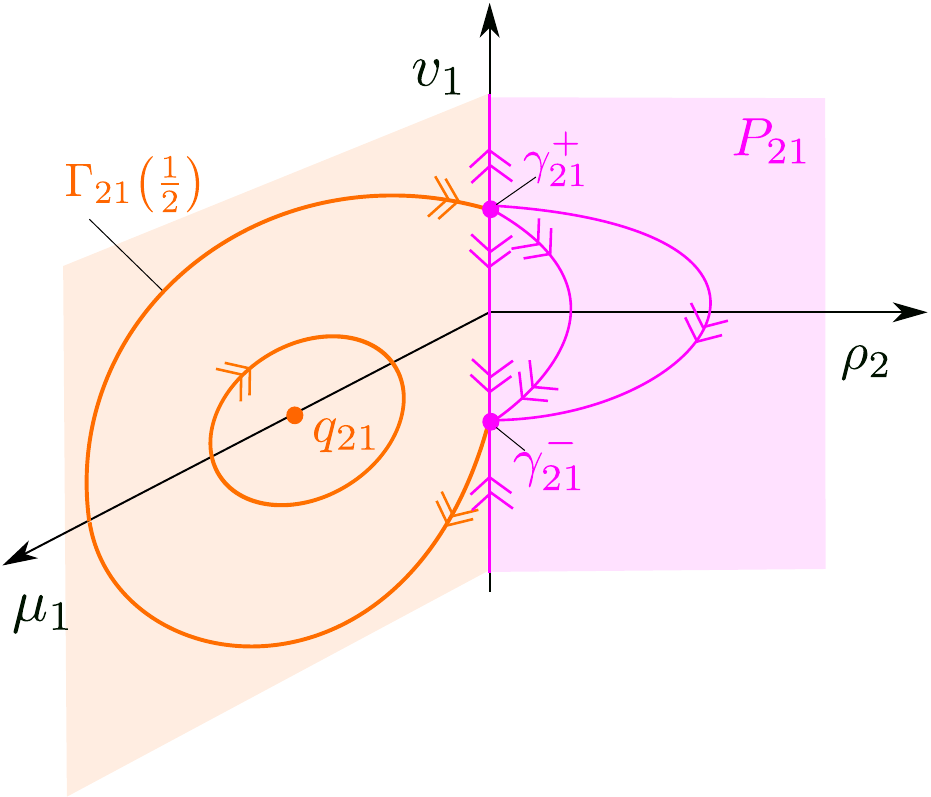}}
 		\caption{Dynamics in the $(\bar r=1,\bar l=1)_{21}$-chart. Here the special orbit $\Gamma_1\!\left(\frac12\right)$ in the $(\bar l=1)_1$-chart, becomes a heteroclinic orbit $\Gamma_{21}\!\left(\frac12\right)$, connecting two hyperbolic saddles $\gamma_{21}^\pm$.}\figlab{rho2mu1v1}
 	\end{center}
 \end{figure}
\subsection{Chart $(\bar r=1,\bar l=1)_{22}$}\seclab{r1l122}
By inserting \eqref{c22} into \eqref{eqnsrho2vl2}, we obtain the following equations
\begin{equation}\eqlab{c22eqns}
\begin{aligned}
 \rho_2' &=\frac12 \rho_2,\\
  \mu_2' &=l_{22} \mu_2\left(l_{22}^3 \mu_2^2-2\delta \rho_2^3 - l_{22}\right),\\
 l_{22}'&=-l_{22}\left(l_{22}^4 \mu_2^2-\delta l_{22}\rho_2^3 -l_{22}^2 + \frac12\right),
\end{aligned}
\end{equation}
after desingularization (corresponding to division of the right hand side by $\mu_2$).
All of the three invariant planes defined by $\mu_2=0$, $\rho_2=0$ and $l_{22}=0$, respectively, are invariant. Within $\mu_2=0$, we find
\begin{equation}\eqlab{mu2eq0}
\begin{aligned}
 \rho_2' &=\frac12 \rho_2,\\
  l_{22}'&=-l_{22}\left(-\delta l_{22}\rho_2^3 -l_{22}^2 + \frac12\right).
\end{aligned}
\end{equation}
Here we rediscover $\gamma_{21}^+$ from the $(\bar r=1,\bar l=1)_{21}$-chart as a hyperbolic unstable node within $\mu_2=0$ given by
\begin{align*}
 \gamma_{22}^+ :\quad (\rho_2,\mu_2,l_{22})=(0,0,1/\sqrt{2}),
\end{align*}
see also \eqref{cc2122}. At the same time, $(\rho_2,l_{22})=(0,0)$ is a hyperbolic saddle for \eqref{mu2eq0}, the linearization having eigenvalues $\pm \frac12$. The two axes, $\rho_2$ and $l_{22}$, are the associated unstable and stable manifolds, respectively. 

Next, within $l_{22}=0$, we have 
\begin{align*}
 \rho_2' &=\frac12 \rho_2,\\
 \mu_2' &=0,
\end{align*}
and the $\mu_2$-axis is therefore a line of saddle points of \eqref{c22eqns}, having $l_{22}=0$ ($\rho_2=0$) as its unstable manifold (stable manifold, respectively). We summarize the findings in \figref{rho2mu2l22}.
\begin{figure}[h!]
 	\begin{center}
 		{\includegraphics[width=.69\textwidth]{./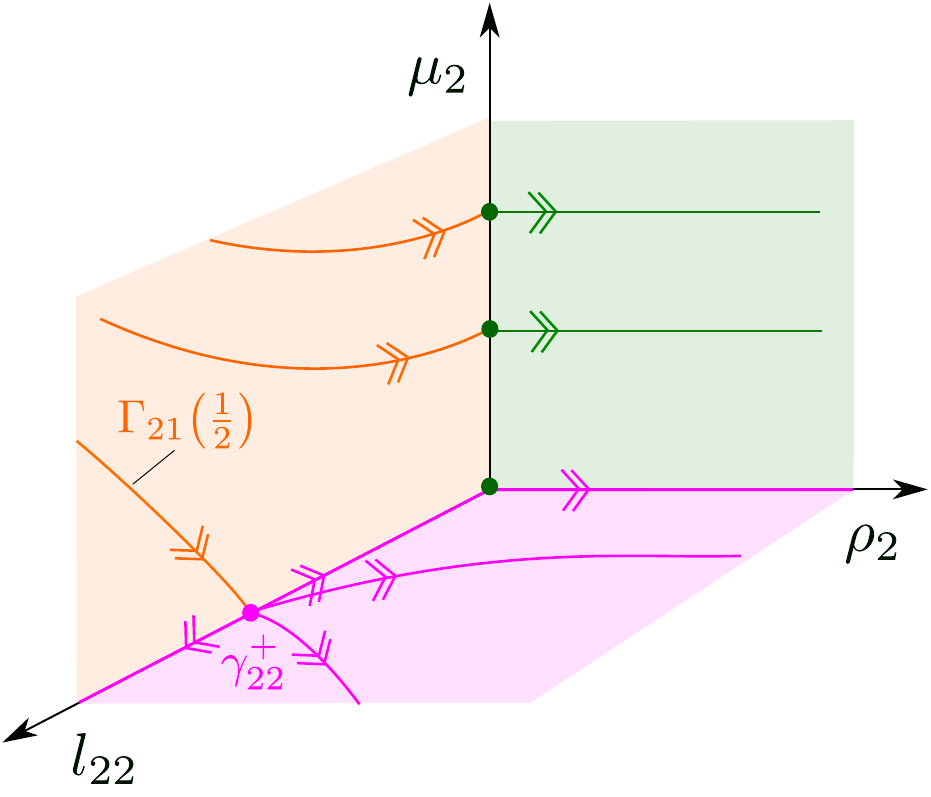}}
 		\caption{Dynamics in the $(\bar r=1,\bar l=1)_{22}$-chart, compare with  \figref{rho2mu1v1}.}\figlab{rho2mu2l22}
 	\end{center}
 \end{figure}
\subsection{Chart $(\bar r=1,\bar l=1)_{23}$}
By inserting \eqref{c23} into \eqref{eqnsrho2vl2}, we obtain the following equations
\begin{equation}\eqlab{c23eqns}
\begin{aligned}
 \rho_2' &=\frac12 \rho_2,\\
  \mu_3' &=l_{23} \mu_3\left(l_{23}^3 \mu_3^2-2\delta \rho_2^3 - l_{23}\right),\\
  l_{23}'&=-l_{23}\left(l_{23}^4 \mu_3^2-\delta l_{23}\rho_2^3 -l_{23}^2 + \frac12\right),
\end{aligned}
\end{equation}
after desingularization (corresponding to division of the right hand side by $\mu_3$).
The analysis in this chart is almost identical to the analysis in $(\bar r=1,\bar l=1)_{22}$. In particular, we rediscover the equilibrium point $\gamma^-_{21}$ from the $(\bar r=1,\bar l=1)_{21}$-chart, now given by
\begin{align*}
 \gamma_{23}^-:\quad (\rho_2,\mu_3,l_{23})=(0,0,1/\sqrt{2}).
\end{align*}
At the same time, we have that the $\mu_3$-axis is line of saddle points of \eqref{c23eqns}, having $l_{22}=0$ ($\rho_2=0$) as its stable manifold (unstable manifold, respectively). An illustration of the dynamics in this chart can be obtained by taking the time-reversal of the diagram \figref{rho2mu2l22} (replacing (a) $l_{22}$, $\mu_2$ by $l_{23}$ and $\mu_3$, respectively,  (b) $\gamma_{22}^+$ by $\gamma_{23}^-$ and finally (c) replacing (in line with \figref{full}) the color green of the invariant plane $l_{22}=0$ to red).

\subsection{Compactification in the $(\bar r=1,\bar l=1)_{21}$-chart}\seclab{inf1}
Upon applying the transformation of $(\rho_2,v_1)$ defined by \eqref{cv11} to the system \eqref{c21eqns}, we obtain the following equations
\begin{equation}\eqlab{c21infeqns}
\begin{aligned}
  \dot \nu &= - \frac12 \nu v_{11},\\
  \dot v_{11} &= -v_{11}(\delta+v_{11})+\nu^6 (\mu_{11}^2\nu^6-1),\\
  \dot \mu_{11} &= \mu_{11} (v_{11}-\delta),
\end{aligned}
\end{equation}
after desingularization (corresponding to multiplication of the right hand side by $\nu^3$). Here $\nu=0$, which corresponds to $r=\infty$, is an invariant set, upon which we find the following:
\begin{equation}\eqlab{nueq0}
\begin{aligned}
 \dot v_{11} &= -v_{11}(\delta+v_{11}),\\
 \dot \mu_{11} &= \mu_{11} (v_{11}-\delta).
 \end{aligned}
 \end{equation}
 We find two equilibria: $p_{21}^+:\,(\nu,v_{11},\mu_{12})=(0,0,0)$ and $p_{21}^-:\,(\nu,v_{11},\mu_{12})=(0,-\delta,0)$ of \eqref{c21infeqns}, both of which are hyperbolic for the reduced system \eqref{nueq0} within $\nu=0$. Indeed, the linearization of \eqref{nueq0} around $p_{21}^+$ produces the eigenvalues  $-\delta,-\delta$ (semi-simple), whereas the linearization of \eqref{nueq0} around $p_{21}^-$ produces $\delta,-2 \delta$ as eigenvalues. Consequently, $p_{21}^+$ is a stable node for \eqref{nueq0}, whereas $p_{21}^-$ is a saddle. 
  
  $\mu_{11}=0$ is also an invariant set for \eqref{c21infeqns}, upon which we find the following:
  \begin{equation}\eqlab{mu1eq0}
  \begin{aligned}
   \dot \nu &= - \frac12 \nu v_{11},\\
 \dot v_{11} &= -v_{11}(\delta+v_{11})-\nu^6.
  \end{aligned}
  \end{equation}
  While $p_{21}^-$ is clearly an unstable node for these equations,
$p_{21}^+$ is only semi-hyperbolic, the linearization having eigenvalues $0$ and $-\delta$. It is a simple calculation, to show that the associated center manifold $W^c(p_{21}^+)$ takes the following graph form:
\begin{align}
 W^c(p_{21}^+):\quad v_{11} = -\frac{1}{\delta} \nu^6(1+\mathcal O(\nu^6)).\eqlab{Wc}
\end{align}
The $\mathcal O(\nu^6)$-term is a smooth function of $\nu^6$ (and $\delta$). This follows from the fact that the first equation of \eqref{mu1eq0} can be written as $(\nu^6)' = -3 \nu^6 v_{11}$. Inserting \eqref{Wc} into \eqref{mu1eq0}
gives
\begin{align*}
 \dot \nu &=\frac{1}{2\delta}\nu^7(1+\mathcal O(\nu^6)),
\end{align*}
and $\nu>0$ is therefore increasing on $W^c(p_{21}^+)$.
The center manifold is therefore unique as the unstable set of $p_{21}^+$ and 
$p_{21}^+$ is a nonhyperbolic saddle for \eqref{mu1eq0}.  
  We illustrate our findings in \figref{numu2v11}.
\begin{remark}\remlab{Wc}
It is a simple calculation to show that the time used in \eqref{c21infeqns} coincides with the original time in \eqref{rvl}, which is why we use $\dot{()}$ instead of $()'$, recall \remref{time}. 

Using \eqref{c2}, \eqref{v1vsrt}, and \eqref{cv11}, we can write \eqref{Wc} in the $(r,\dot r)$-plane as a graph
\begin{align*}
 \dot r &=-\frac{1}{\delta r^2}(1+\mathcal O(r^{-3})),
\end{align*}
over $r\gg 1$. We see that $\dot r\rightarrow 0$ on $W^c$ as $t\rightarrow -\infty$, in line with the results of \cite[Proposition 3.1]{margheri2014a}.
\end{remark}

\begin{figure}[h!]
 	\begin{center}
 		{\includegraphics[width=.69\textwidth]{./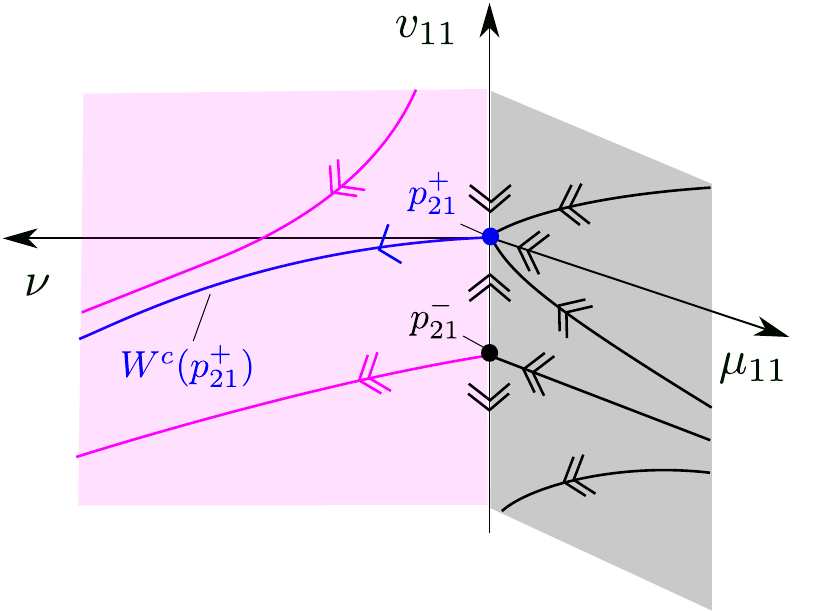}}
 		\caption{Dynamics of \eqref{c21infeqns}. The point $p_{21}^-$ is fully hyperbolic, whereas $p_{21}^+$ is only semi-hyperbolic (we use single-headed arrows to separate center directions from hyperbolic directions (double-headed arrows)). The center manifold $W^c(p_{21}^+)$ of $p_{21}^+$ is unique as the unstable set of $p_{21}^+$. }\figlab{numu2v11}
 	\end{center}
 \end{figure}
\subsection{Compactification in the $(\bar r=1,\bar l=1)_{22}$-chart}\seclab{inf2}
Upon applying the transformation of $(\rho_2,l_{22})$ defined by \eqref{c221} to the system \eqref{c22eqns}, we obtain the following equations
\begin{equation}\eqlab{c22infeqns}
\begin{aligned}
  \nu' &= - \frac12 \nu,\\
  \mu_{2}' &= -\mu_2 l_{222}\left(2\delta+\nu^6 l_{222}-\nu^{12} l_{222}^3 \mu_2^2\right),\\
 l_{222}' &=l_{222}\left(1+\delta l_{222}+\nu^6 l_{222}^2-\nu^{12} l_{222}^4 \mu_2^2\right).
\end{aligned}
\end{equation}
Here $l_{222}=0$ defines an invariant set upon which we have $\nu' = -\frac12 \nu$ and $\mu_2'=0$. Since $l_{222}=0$ corresponds to $l_{22}=0$, see \eqref{c221}, these findings are obviously in agreement with the results in the $(\bar r=1,\bar l=1)_{22}$-chart whenever $\nu>0$, see \secref{r1l122} and \figref{rho2mu2l22} (green plane). On the other hand, $\nu=0$, corresponding to $r=\infty$, is now an invariant set of \eqref{c22infeqns}. In fact, $\nu=l_{222}=0$ is a line of saddle points; the linearization of \eqref{c22infeqns} about any point in this set having eigenvalues $-\frac12,0,1$. The stable manifold of $\nu=l_{222}=0$ is the $(\nu,\mu_2)$-plane, whereas the associated unstable set is the $(\mu_2,l_{222})$-plane. Setting $\nu=0$ in \eqref{c22infeqns} gives
\begin{align*}
  \mu_{2}' &= -2\delta \mu_2 l_{222},\\
  l_{222}' &=l_{222}\left(1+\delta l_{222}\right),
\end{align*}
which has no equilibria within the first quadrant. In particular, $l_{222}>0$ ($\mu_2$) is monotonically increasing (decreasing, respectively). 
Finally, within $\mu_2=0$ we have 
\begin{align*}
 \nu' &= - \frac12 \nu,\\
  l_{222}' &=l_{222}\left(1+\delta l_{222}+\nu^6 l_{222}^2\right),
 \end{align*}
 having a hyperbolic saddle at the origin. We summarize the local findings in \figref{numu2l222}.

 \begin{figure}[h!]
 	\begin{center}
 		{\includegraphics[width=.69\textwidth]{./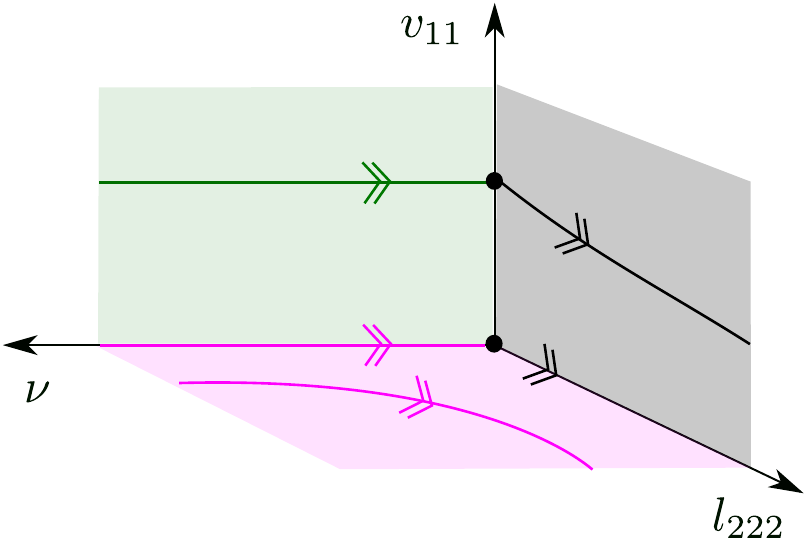}}
 		\caption{Dynamics of \eqref{c22infeqns}. The $v_{11}$-axis is a line of saddle-points. There are no other equilibria in this chart. }\figlab{numu2l222}
 	\end{center}
 \end{figure}
\subsection{Compactification in the $(\bar r=1,\bar l=1)_{23}$-chart}
Upon applying the transformation of $(\rho_2,l_{22})$ defined by \eqref{c221} to the system \eqref{c22eqns}, we obtain the following equations
\begin{equation}\eqlab{c23infeqns}
\begin{aligned}
 \nu' &= \frac12 \nu,\\
  \mu'_{3} &= \mu_3 l_{233}\left(2\delta-\nu^6 l_{233}-\nu^{12} l_{233}^3 \mu_3^2\right),\\
 l_{233}' &=l_{233}\left(-1+\delta l_{233}-\nu^6 l_{233}^2+\nu^{12} l_{233}^4 \mu_3^2\right).
\end{aligned}
\end{equation}
The analysis of these equations is almost identical to the analysis performed in \secref{inf1} and \secref{inf2}. We therefore only present a diagram, see \figref{numu2l233}.

Upon combining all of our findings in the local charts, we obtain the global perspective in \figref{final} (using the geometric viewpoint in \figref{full}). Notice that from this perspective it follows directly that any $\Gamma(h)$ with $h>\frac12$ cannot be the $\omega$-limit set. We elaborate further upon this in the following section. 
\begin{figure}[h!]
 	\begin{center}
 		{\includegraphics[width=.69\textwidth]{./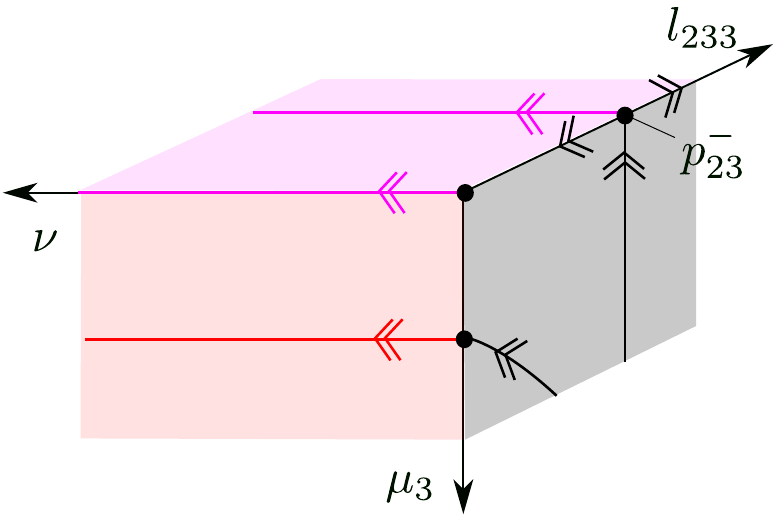}}
 		\caption{Dynamics of \eqref{c23infeqns}. The hyperbolic point $p_{23}^-$ corresponds to the point $p_{22}^-$, see \figref{numu2v11}, upon the coordinate transformation defined by $l_{233}=-v_{11}^{-1}$. The $\mu_3$-axis is a line of saddle-points.}\figlab{numu2l233}
 	\end{center}
 \end{figure}
\section{Discussion}\seclab{disc}
In this paper, we have revisited the linearly damped Kepler problem \eqref{keplerd} with the main purpose of describing the smoothness of the invariant manifolds obtained in \cite{margheri2017a}, see \thmref{main2} and \thmref{main3}. In the process, we identified a separate invariant manifold $W^s(q)$, see \thmref{main1}. This one-dimensional manifold of \eqref{rttlt}, corresponding to orbits becoming more circular as $t\rightarrow \infty$, acts as the center of oscillations along the invariant manifolds of \thmref{main2} and \thmref{main3}, see \figref{final}. 

Finally, in \secref{blowup} we performed a blowup analysis that led to the geometric description of the dynamics illustrated in \figref{final}. In future work, we aim to use a similar approach to study \eqref{keplerd} with the general $(u,\dot u)$-dependent nonlinear damping:
\begin{align*}
\delta = \frac{k \vert \dot u\vert^\alpha}{\vert u\vert^\beta},
\end{align*}
for $\beta,\alpha \in \mathbb N_0$ and $k>0$. This family was considered by Poincar\'e, and he argued formally, see e.g. \cite{margheri2020a}, that for $\alpha$ and $\beta$ sufficiently large, orbits tend to ``circularize'', in the sense that $\mathcal E\rightarrow 0$ for $t\rightarrow \infty$ on an open set. To the best of our knowledge, this remains a conjecture to this today. We hope to solve this conjecture and determine all $\alpha$ and $\beta$ for which ``circularization'' occurs using a similar blowup approach. We emphasize that it is known that circularization does not occur for linear damping (indeed, it is exceptional occording to \thmref{main1} and \thmref{main2}) nor does it occur for the Poynting-Plummer-Damby damping ($\alpha=0$, $\beta=2$). See \cite{diacu1999a} and \cite{margheri2020a} for a general analysis of the case $\alpha=0$, $\beta>0$.

 In the following, we will use the geometric framework of the blowup approach to shed light on $\vert \mathcal E_\infty\vert\rightarrow 1$, recall \lemmaref{Hhge12}.

Firstly, in \figref{simulation} we have used Matlab's ODE45 to simulate the system for initial conditions near the set $\Gamma_2(0.8)$ defined by $\rho_2=0$, $H_2(l_2,v)= 0.8>\frac12$, recall \eqref{H2}. More specifically, in (a) we use the coordinates $(\rho_2,v,l_2)$ of the $(\bar r=1)_2$-chart, see \eqref{c2}, and select $50$ initial conditions (indicated by the cyan cylinder near $v=0$) with $(l_2,v,\rho_2)=(l_{20},0,\rho_{20})$, $H_2(l_{20},0)=0.8>\frac12$, and $\rho_{20}$ in the interval $(0.15,0.25)$ (equispaced). The curve $\Gamma_2(0.8)$ is shown in red and the forward orbits (also in cyan) follow this red curve, but only up until a vicinity  of the $v$-axis (due to the saddle-structure along this set, see \figref{rho2vl2} and the discussion of \figref{final2} below). From here $\rho_2$ increases and the orbits eventually contract towards the degenerate set $P_2$. In (b), we use the coordinates $(\rho_2,v_1,\mu_1)$ of the $(\bar r=1,\bar l_2=1)_{21}$-chart, where $P_2$ has been blown up, to illustrate the same (computed) orbits. Here we see that the motion along $P_2$ in (a) is due to the contraction towards the center manifold $W^c(p_{21}^+)$; this is also shown in (c) using a projection onto the $(\nu,v_{11})$-plane, recall \eqref{cv11}. Beyond the motion along $W^c(p^+)$ in \figref{simulation} (b), the cyan curves come close together near $\gamma_{21}^-$ and follow the heteroclinic orbit $\Gamma_{21}\!\left(\frac12\right)$, connecting $\gamma_{21}^-$ and $\gamma_{21}^+$. Once passing close to $\gamma_{21}^+$ the cyan curves move close to $\mu_1=0$ plane again before returning to $\gamma_{21}^-$ and $\Gamma_{21}\!\left(\frac12\right)$. This process repeats itself. 
The computations were performed with low tolerances ($10^{-12}$) and ended when the value of $l$ reached $10^{-4}$. 

The results in \figref{simulation} are in agreement with the findings in \secref{blowup} (see \figref{final}). We emphasize this point further in \figref{final2}, where we illustrate a ``singular'' orbit of a point on the orange cylinder. Here singular refers to the fact that it lies on the blowup space (it is therefore not a true orbit of the system \eqref{keplerd} since it occurs on $r=0$) and at the same time we use a concatenation of unstable and stable manifolds across saddle points. The forward orbit of initial conditions starting near the cyan curve on the orange cylinder (with $0<l\ll 1$) (as in \figref{simulation} in the local charts $(\bar r=1)_2$ and $(\bar r=1,\bar l_2=1)_{21}$) will track this curve (meaning that the orbit remains close to local copies within compact subsets of the local charts) up until $\gamma^+$. This follows from \secref{blowup}, but we will leave out further details. 

At $\gamma^+$ there is no unique forward (singular) orbit, since $\gamma^+$ is an unstable node on $\mu=0$ (the $l=0$ cylinder corresponding to the blowup of $P_2$, see \eqref{phiR2}), see also \figref{rho2mu1v1}. Nevertheless, since (a) $\gamma^+$ connects to $\gamma_-$, (b) $\gamma_-$ is contracting on $\rho=0$, and (c) $\Gamma\!\left(\frac12\right)$ connects back to $\gamma^+$ this process repeats itself for an actual orbit, starting near the cyan disc with $0<l\ll 1$. However, since $l$ is monotonically decreasing, the excursions from $\gamma^+$ to $\gamma^-$ will get closer and closer to the purple connection (along the half-circle $(\bar v,\bar l_2)\in S^1$ $\bar l_2\ge 0$ within $\rho_2=0$). We observe this numerically (also indicated in \figref{simulation} (b) by the blue segments close to $\mu_1=0$). 

 However, and this is the main point, since $\Gamma_1\!\left(\frac12\right)$ cannot be the $\omega$-limit set, see \lemmaref{Hhge12}, and since $l=0$ is invariant, the $\omega$-limit set of the initial conditions in \figref{simulation} (or near the cyan disc in \figref{final2}) has to be $\Gamma_1(h)$ with $h<\frac12$ but $h\sim \frac12$. In other words, these initial conditions belong to a $W^s(\Gamma(h))$ for some $h\in (0,\frac12)$, where $h\rightarrow \frac12$ as $l_0\rightarrow 0$. The implication of this is that the cylinders $W^s(\Gamma(h))$ have a dramatic limit $h\rightarrow \frac12$ (corresponding to $\vert \mathcal E_\infty\vert\rightarrow 1^-$ see \eqref{Hinf}). Indeed, the two-dimensional cylindrical manifolds $W^s(\Gamma(h))$ with $h<\frac12$ but $h\sim \frac12$ open up like a flower, stretching (and collapsing for $h\rightarrow \frac12$) both along the orange and the purple cylinder in \figref{final2} (corresponding to $r$ very small and very large, respectively). 

 At the same time, $l=0$ defines an invariant set (of collinear orbits) and the $\omega$-limit set of any point within this set is $\gamma^-$. Orbits having $\gamma^+$ as the $\alpha$-limit set (within $l=0$) are called ejection-collision orbits, whereas orbits having either $p^-$ or $p^+$ as the $\alpha$-limit set (going along $W^s(p_{21}^+)$ in the latter case) are referred to as capture-collision orbits, see e.g. \cite{diacu1999a,margheri2014a}. Interestingly, the orbits in \figref{simulation} with $0<l\ll 1$ exhibit both types of behaviour, first capture-collision by following close to $W^c(p_{21}^+)$ and then subsequently (and repeatedly) ejection-collision type behaviour by passing close to $\gamma^+$ and $\gamma^-$. We illustrate this in our final figure \figref{Rvst} plotting $r$ as a function of $t$ along one of the cyan curves in \figref{simulation}(a).

\begin{figure}[h!]
 	\begin{center}
 		\subfigure[$(\bar r=1)_2$-chart]{\includegraphics[width=.465\textwidth]{./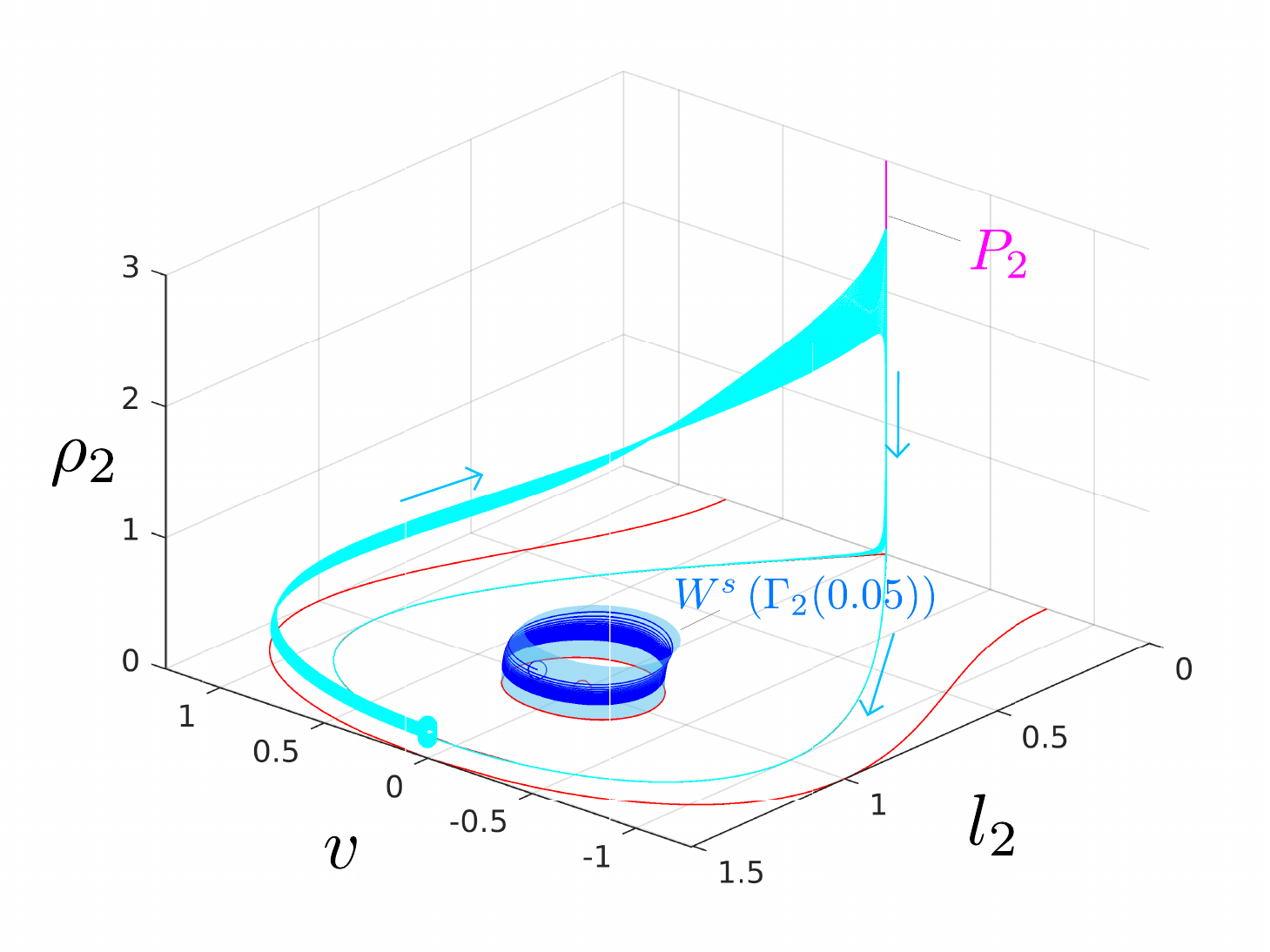}}
 		\subfigure[$(\bar r=1,\bar l_2=1)_{21}$-chart]{\includegraphics[width=.465\textwidth]{./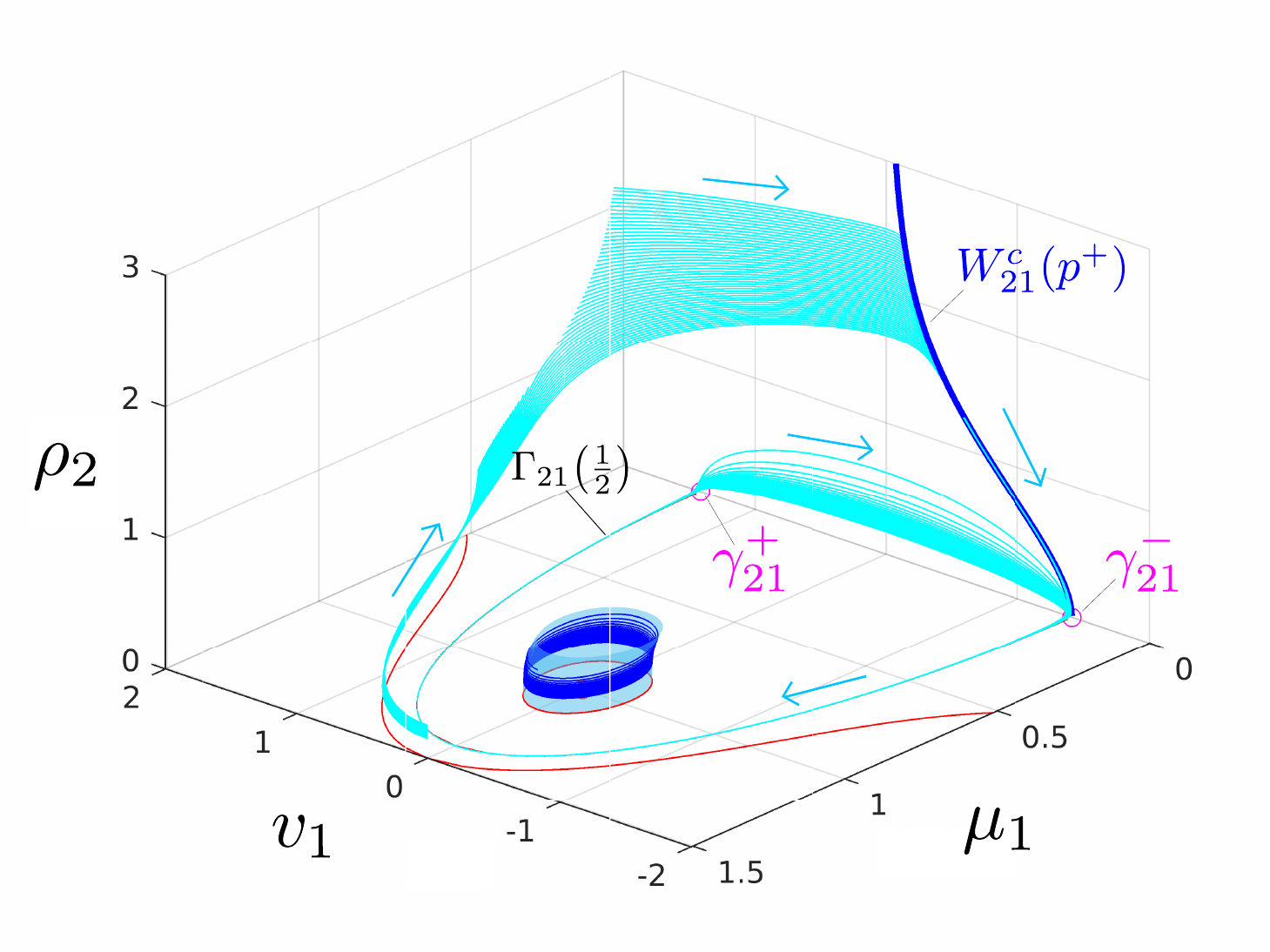}}
 		\subfigure[]{\includegraphics[width=.465\textwidth]{./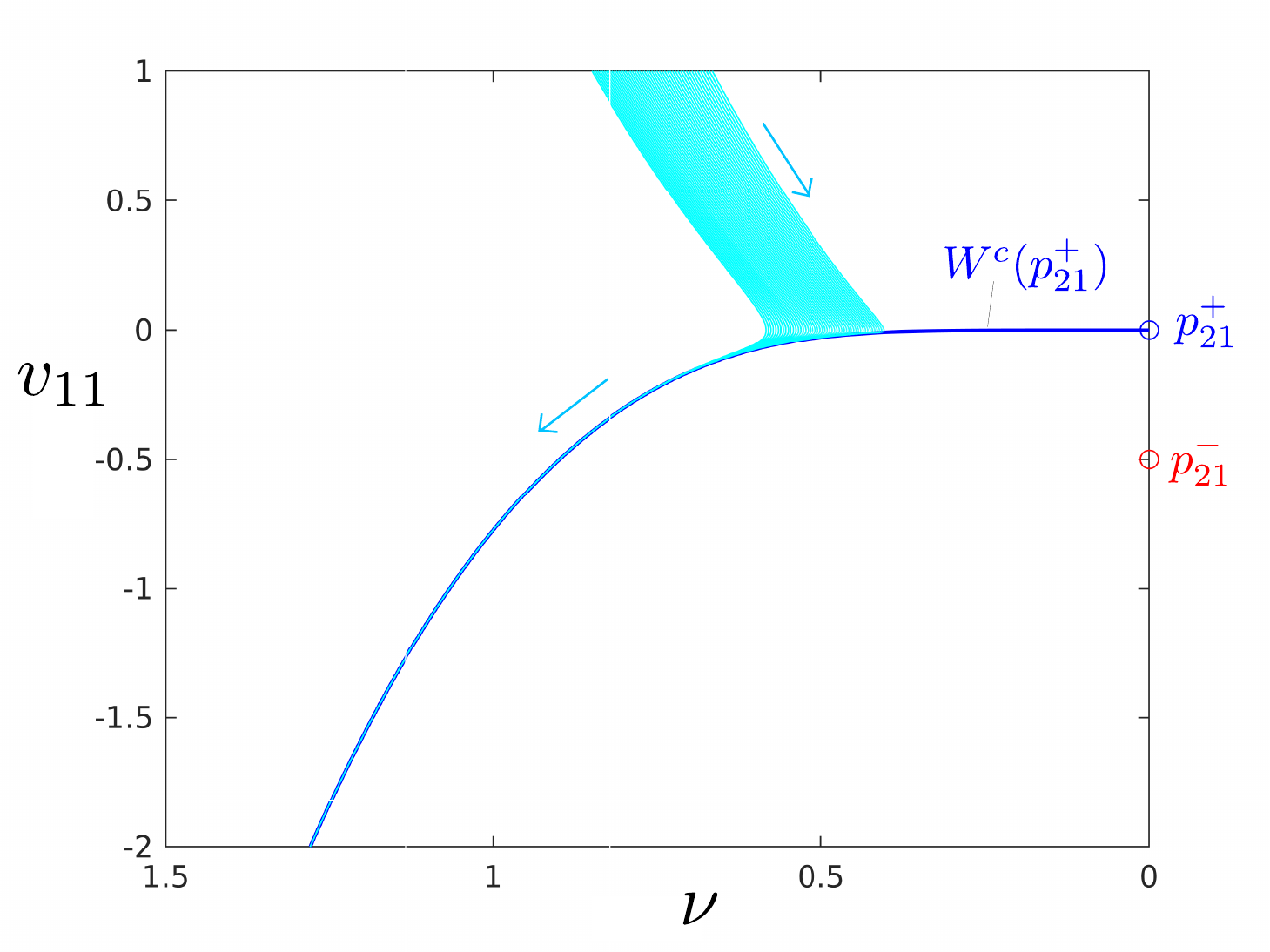}}
 		\caption{Numerical forward integration (using Matlab's ODE45 with low tolerances) of initial conditions (cyan cylinder near $v=0$) starting close to a $h=0.8$ level set of the Hamiltonian function $H_2$ within $\rho_2=0$ (red curves in (a) and (b)). The corresponding orbits (also in cyan) follow an itinerary that can be explained by our blowup analysis (see text and compare with  \figref{final2} below). In (a) we use the $(\rho_2,v,l_2)$-coordinates of the $(\bar r=1)_2$-chart, whereas in (b) we use the $(\rho_2,v_1,\mu_1)$-coordinates of the $(\bar r=1,\bar l_2=1)_{21}$-chart. In (c) we use a projection onto the $(\nu,v_{11})$-plane, see \eqref{cv11}. Here we see that the cyan orbits all follow the center manifold $W^c(p_{21}^+)$ which lead these orbits towards $\gamma_{21}^-$ and $\Gamma_{21}\!\left(\frac12\right)$, see (b).}\figlab{simulation}
 	\end{center}
 \end{figure}

\begin{figure}[h!]
 	\begin{center}
 		{\includegraphics[width=.9\textwidth]{./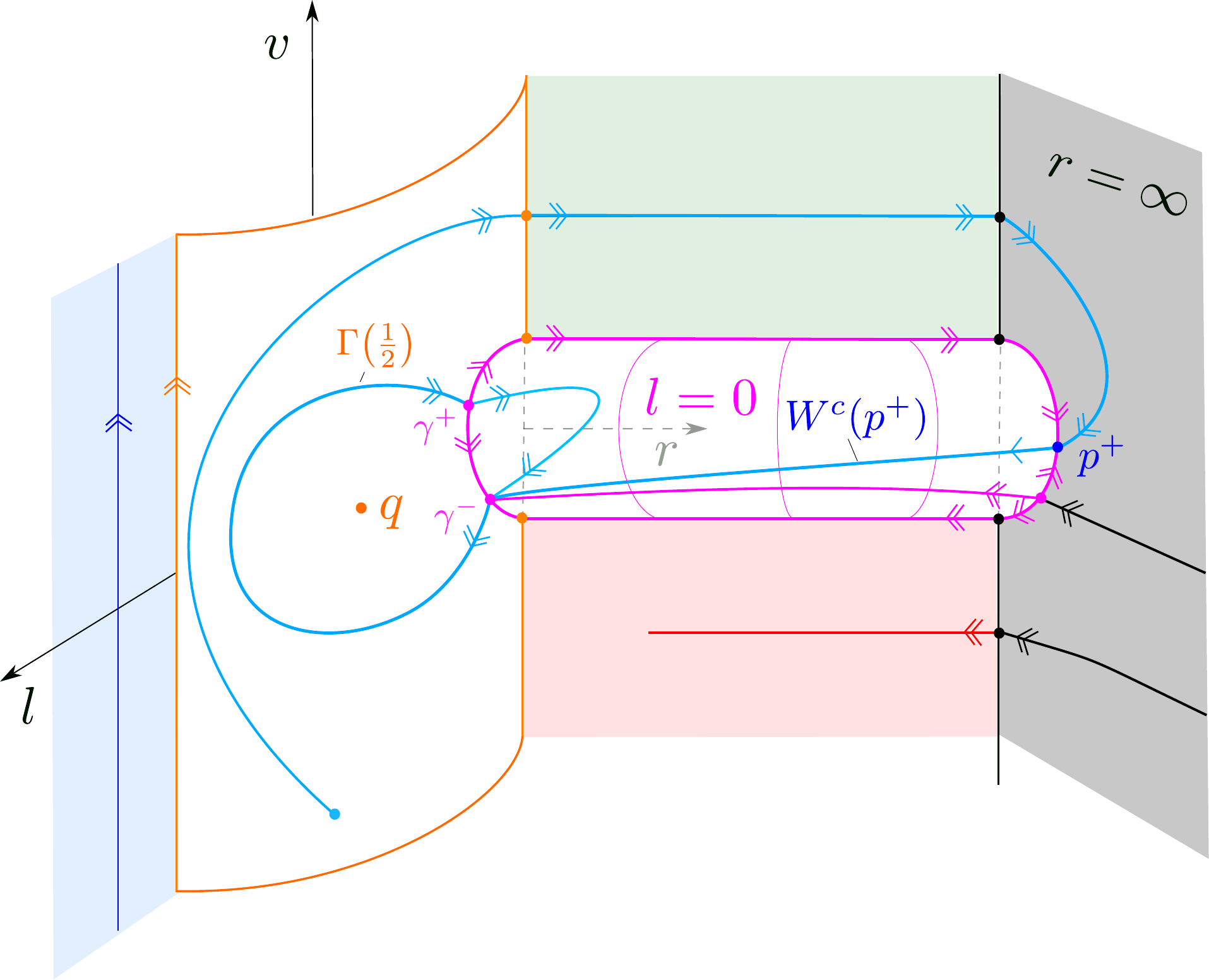}}
 		\caption{The forward flow of an initial condition (cyan disc) with sufficiently small angular momentum $0<l \ll 1$ and near a $h>\frac12$ level set of the Hamiltonian function $H$, will follow the ``singular orbit'' in cyan  (i.e. the forward orbit will remain $o(1)$-close to local copies of this cyan curve in the local charts as the initial value of the angular momentum $l\rightarrow 0^+$), at least up until $\gamma^+$, see \figref{simulation}.}\figlab{final2}
 	\end{center}
 \end{figure}

 \begin{figure}[h!]
 	\begin{center}
 		{\includegraphics[width=.65\textwidth]{./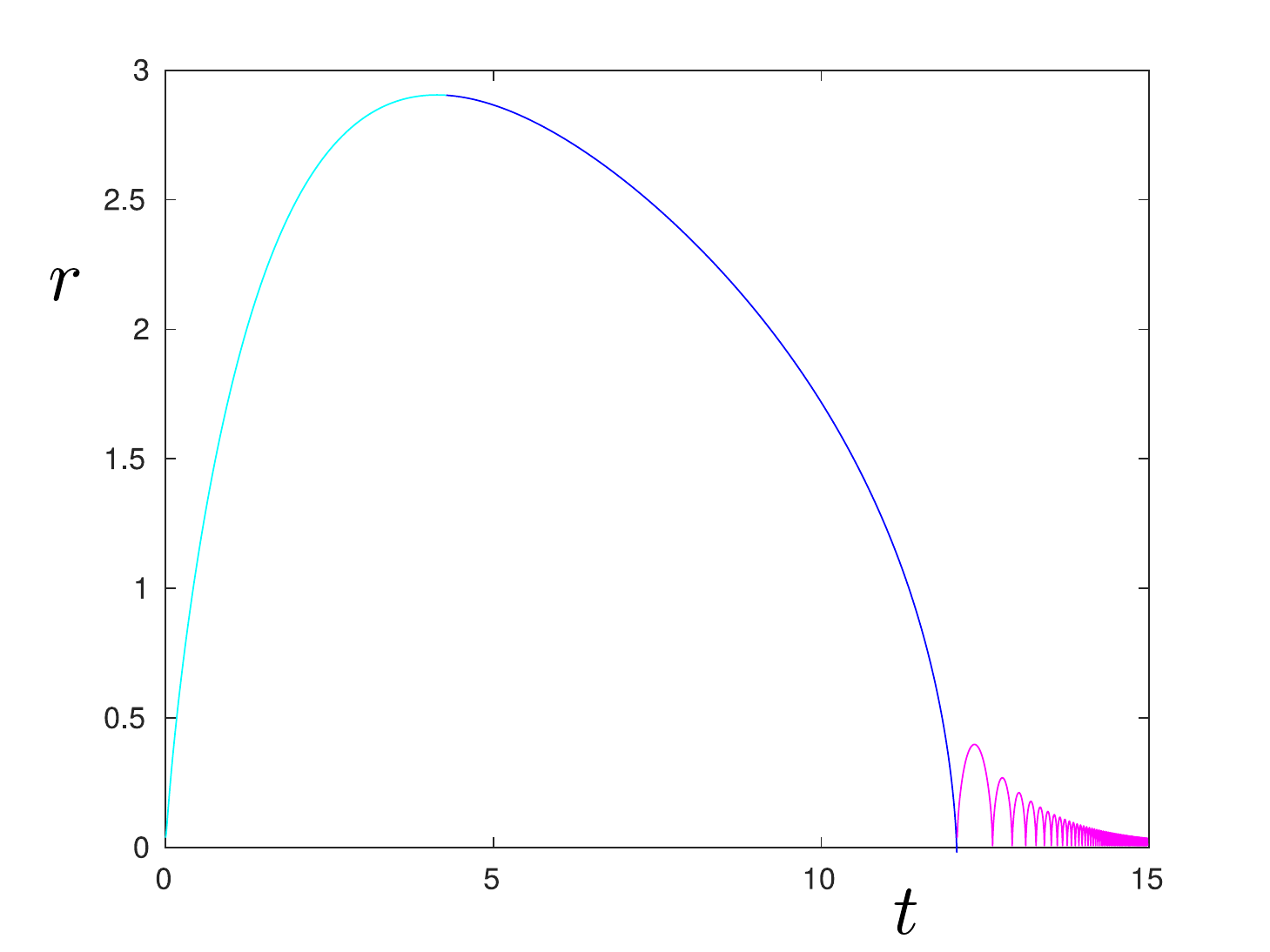}}
 		\caption{$r(t)$ along one of the cyan curves in \figref{simulation}. The curve has been divided into three parts: cyan, blue and purple and these are characterized as follows. The first cyan part corresponds to an increase in $r$ due to the line of saddle points along the $v$-axis in \figref{simulation}(a). This is followed by the blue part, which corresponds to a capture-collision type motion (in phase space this part occurs due to the attraction towards the center manifold $W^c(p_{21}^+)$). The final part in purple, corresponds to repeated ejection-collision type oscillations, the amplitude of which are decreasing. }\figlab{Rvst}
 	\end{center}
 \end{figure}
\bibliography{refs}
\bibliographystyle{plain}
\newpage
\appendix
\section{Proof of \lemmaref{est0}}\applab{est}
We consider \eqref{HNphixeqns}, repeated here for convinience
\begin{equation}\eqlab{HNphixeqns2}
\begin{aligned}
 \frac{dH}{d\tau} &= \frac{1}{3\delta} x\left( \Lambda(H,x)+x^{N} R(H,\phi,x)\right),\\
 \frac{d\phi}{d\tau} & = \frac{1}{3\delta x^2} \left(\Omega(H,x)+x^{N+1} P(H,\phi,x)\right),\\
 \frac{dx}{d\tau}&=-1,
\end{aligned}
\end{equation}
for $x>0$. We then apply $\frac{\partial^{\vert \mathbb \nu\vert}}{\partial \tau^{\nu_1}\partial H_0^{\mathbb \nu_2} \partial \phi_0^{\mathbb \nu_3} \partial x_0^{\partial \nu_4} }$ on both sides of these equations; recall the notation defined in \eqref{znu}. We then obtain $\frac{d}{d\tau}\underline H_{\mathbb \nu}$, $\frac{d}{d\tau}\underline \phi_{\mathbb \nu}$ and $\frac{d}{d\tau}\underline x_{\mathbf \nu}$ on the left hand sides, respectively.  Obviously, 
\begin{align}\eqlab{xnu}
x_{\mathbf \nu} \equiv 0 \, \,\text{unless}\, \, \mathbf \nu=(1,0,0,0),\,\, \text{or}\,\, \mathbf \nu =(0,0,0,1),
\end{align}
where $x_{(1,0,0,0)}=-1,\,x_{(0,0,0,1)}=1$.

To handle the associated right hand sides, we use the Faa di Bruno formula \cite{constantine1996a}: For $\mathbf \nu=(\nu_1,\cdots, \nu_d)\in \mathbb N_0$, $z=(z_1,\cdots,z_d)$, $d\in \mathbb N$, define 
\begin{align*}
 \mathbf \nu ! &:= \prod_{i=1}^d \nu_i !,\\
 D^{\mathbf \nu}&:=\frac{\partial^{\vert \mathbb \nu\vert}}{\partial z_1^{\nu_1}\cdots \partial z_d^{\mathbb \nu_d} },\\
 [z]^{\mathbf \nu}&:=\prod_{i=1}^d z_i^{\nu_i}.
\end{align*}
Moreover, if we also consider $\mathbf \mu=(\mu_1,\ldots,\nu_d)$, then we write $\mathbf \mu\prec \mathbf \nu$ provided at least one of the conditions hold:
\begin{enumerate}
 \item $\vert \mu\vert <\vert \nu\vert$;
 \item $\vert \mu\vert =\vert \nu\vert$ and $\mu_1<\nu_1$; or
 \item $\vert \mu\vert =\vert \nu\vert$, $\mu_1=\nu_1,\ldots,\mu_k=\nu_k$ and $\mu_{k+1}<\nu_{k+1}$ for some $1\le k<d$. 
\end{enumerate}

Finally, we write $\mathbf 0=(0,\ldots,0)$ in $\mathbb N_0^d$. 
\begin{lemma}\lemmalab{faa}
\cite[Theorem 2.1]{constantine1996a}
Consider smooth functions $F:\mathbb R^m\rightarrow \mathbb R$ and $G=(G_1,\ldots,G_m):\mathbb R^d\rightarrow \mathbb R^m$, and define
\begin{align*}
 W(z) := (F\circ G)(z).
\end{align*}
Then 
\begin{align}
D^{\mathbf \nu} W = \sum_{1\le \vert \mathbf \lambda \vert \le n} D^{\mathbf \lambda} F \sum_{s=1}^n \sum_{p_s(\mathbf \nu,\mathbf \lambda)} (\mathbf \nu!) \prod_{j=1}^s \frac{[D^{l_j} G_1,\cdots,D^{l_j} G_m ]^{k_j} }{(k_j!)(l_j)^{\vert k_j\vert}},\eqlab{DnuW}
\end{align}
where $n=\vert \nu\vert$ and 
\begin{equation}\eqlab{ps}
\begin{aligned}
 p_s(\mathbf \nu,\mathbf \lambda):=&\bigg\{(k_1,\ldots,k_s,l_1,\ldots,l_s):\vert k_i\vert >0,\\
 &\quad \mathbf 0\prec l_1\prec\cdots \prec l_s,\\ 
 &\quad \sum_{i=1}^s k_i = \mathbf \lambda,\quad \sum_{i=1}^s \vert k_i\vert l_i = \mathbf \nu\bigg\}.
\end{aligned}
\end{equation}

\end{lemma}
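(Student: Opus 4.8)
The plan is to bypass the bookkeeping-heavy induction on the order $n=\vert\mathbf{\nu}\vert$ that is used in \cite{constantine1996a}, and instead obtain \eqref{DnuW} directly by extracting a single Taylor coefficient of the composed series. Fix a base point $z_0$ and set $y_0=G(z_0)$. Since all functions are smooth and only derivatives of order $\le n$ at $z_0$ enter, I may replace $F$ and $G$ by their degree-$n$ Taylor polynomials; \eqref{DnuW} then becomes a \emph{polynomial} identity in finitely many formal variables, so every manipulation below is finite and legitimate. The starting point is that, writing $t=(t_1,\ldots,t_d)$ and $[t]^{\mathbf{\nu}}=\prod_i t_i^{\nu_i}$,
\begin{align*}
 D^{\mathbf{\nu}} W(z_0) = \mathbf{\nu}!\,[t^{\mathbf{\nu}}]\,W(z_0+t),
\end{align*}
where $[t^{\mathbf{\nu}}]$ extracts the coefficient of $[t]^{\mathbf{\nu}}$. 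It therefore suffices to compute this one coefficient.

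The next step is substitution. Writing $w_i:=G_i(z_0+t)-G_i(z_0)=\sum_{\vert l\vert\ge 1}\tfrac{D^{l}G_i(z_0)}{l!}t^{l}$ and $F(y_0+w)=\sum_{\mathbf{\lambda}}\tfrac{D^{\mathbf{\lambda}}F(y_0)}{\mathbf{\lambda}!}\prod_{i=1}^m w_i^{\lambda_i}$, one gets
\begin{align*}
 W(z_0+t)=\sum_{\mathbf{\lambda}}\frac{D^{\mathbf{\lambda}}F(y_0)}{\mathbf{\lambda}!}\prod_{i=1}^m\left(\sum_{\vert l\vert\ge 1}\frac{D^{l}G_i(z_0)}{l!}\,t^{l}\right)^{\lambda_i}.
\end{align*}
Because every inner factor carries $\vert l\vert\ge 1$, a term can contribute to $[t]^{\mathbf{\nu}}$ only when $1\le\vert\mathbf{\lambda}\vert\le n$, which is exactly the outer summation range in \eqref{DnuW}. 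For fixed $\mathbf{\lambda}$, expanding each factor $(\cdots)^{\lambda_i}$ by the multinomial theorem amounts to choosing, for each of the $\lambda_i$ copies coming from component $i$, a multi-index $l$; grouping the chosen multi-indices by their \emph{distinct} values $\mathbf{0}\prec l_1\prec\cdots\prec l_s$ and letting $k_{ji}$ record how many copies from component $i$ carry the value $l_j$ produces precisely the data $(k_1,\ldots,k_s,l_1,\ldots,l_s)$ with $k_j=(k_{j1},\ldots,k_{jm})$. The two defining constraints of $p_s(\mathbf{\nu},\mathbf{\lambda})$ in \eqref{ps} then appear automatically: the per-component counts give $\sum_j k_j=\mathbf{\lambda}$, and matching the total $t$-exponent gives $\sum_j\vert k_j\vert l_j=\mathbf{\nu}$, while the strict ordering $l_1\prec\cdots\prec l_s$ merely fixes one labelling per grouping.

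The remaining work is the coefficient collapse. For a fixed grouping the multinomial count from component $i$ is $\lambda_i!/\prod_j k_{ji}!$, each selected monomial contributes $D^{l_j}G_i/l_j!$, and there is the prefactor $1/\mathbf{\lambda}!$ from $F$. Using $\mathbf{\lambda}!=\prod_i\lambda_i!$ one finds $\tfrac{1}{\mathbf{\lambda}!}\prod_i\lambda_i!=1$, whereas $\prod_i\prod_j k_{ji}!=\prod_j k_j!$ and $\prod_i(l_j!)^{k_{ji}}=(l_j!)^{\vert k_j\vert}$; together with $\prod_i(D^{l_j}G_i)^{k_{ji}}=[D^{l_j}G_1,\ldots,D^{l_j}G_m]^{k_j}$ this assembles the contribution of one grouping into
\begin{align*}
 D^{\mathbf{\lambda}}F\prod_{j=1}^s\frac{[D^{l_j}G_1,\ldots,D^{l_j}G_m]^{k_j}}{(k_j!)\,(l_j!)^{\vert k_j\vert}}.
\end{align*}
Multiplying by $\mathbf{\nu}!$ and summing over $\mathbf{\lambda}$, over $s$, and over $p_s(\mathbf{\nu},\mathbf{\lambda})$ recovers \eqref{DnuW} (the denominator there being read as $(l_j!)^{\vert k_j\vert}$).

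I expect the main obstacle to be purely notational rather than conceptual. One must keep scrupulous track of the double role of indices—the index $i=1,\ldots,m$ running over the components $G_i$ and over the component counts $k_{ji}$, versus the index $j=1,\ldots,s$ running over the distinct derivative orders $l_j$—and of the convention that $l_1\prec\cdots\prec l_s$ is strictly increasing, so that no grouping is double-counted. The only genuinely delicate point is verifying that the multinomial coefficients together with the factorials from the Taylor denominators collapse \emph{exactly} to $\mathbf{\nu}!/[(k_j!)(l_j!)^{\vert k_j\vert}]$; everything else is the finite formal power-series bookkeeping already justified by truncation at order $n$. (An alternative route is induction on $n$, applying $D^{e_p}$ to the formula for $\mathbf{\nu}$ and checking that the index sets $p_s$ transform correctly, but this reproduces the heavier combinatorial accounting of \cite{constantine1996a}, which the generating-function argument circumvents.)
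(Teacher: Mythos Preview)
The paper does not supply its own proof of this lemma; it is simply quoted from \cite[Theorem~2.1]{constantine1996a} and used as a black box in the appendix estimates. So there is nothing in the paper to compare your argument against.

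Your generating-function argument is correct and is one of the standard routes to the multivariate Fa\`a di Bruno formula. The cited reference \cite{constantine1996a} proceeds instead by induction on $n=\vert\mathbf{\nu}\vert$, applying one additional partial derivative and tracking how the index sets $p_s(\mathbf{\nu},\mathbf{\lambda})$ transform---precisely the ``heavier combinatorial accounting'' you mention in your final parenthetical. What your approach buys is that the two constraints $\sum_j k_j=\mathbf{\lambda}$ and $\sum_j\vert k_j\vert l_j=\mathbf{\nu}$ appear for free as the obvious bookkeeping of matching a single Taylor coefficient, rather than having to be verified inductively; the price is that one must check the factorial collapse carefully, which you do. Your reading of the denominator as $(l_j!)^{\vert k_j\vert}$ is also correct---the paper's displayed formula has a typographical slip, writing $(l_j)^{\vert k_j\vert}$ without the factorial, but the Constantine--Savits formula indeed has $(l_j!)^{\vert k_j\vert}$.
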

We now consider the following functions:
\begin{align*}
W_R&=\underline x^{N+1}. R\circ (\underline H,\underline \phi,\underline x),\\
W_P& =\underline x^{N-1}. P\circ (\underline H,\underline \phi,\underline x),
\end{align*}
of $(\tau,H_0,\phi_0,x_0)\in V(\xi)$, 
which appear on the right hand side of \eqref{HNphixeqns2}. Here $V(\xi)$ is defined in \eqref{Vxi}, repeated here for convinience:
\begin{align*}
V(\xi):=\left\{(\tau,H_0,\phi_0,x_0) \in (0,\xi)\times J\times \mathbb T\times  (0,\xi)\,:\,0<\tau<x_0\right\}.
\end{align*}
\begin{lemma}\lemmalab{est}
Let $M\in \mathbb N$ be so that $N-2M\ge 0$.
 Suppose that 
 \begin{align*}
  \vert \underline H_{\mathbf \nu}(\tau,H_0,\phi_0,x_0)\vert \le C_M,\quad \vert \underline \phi_{\mathbf \nu}(\tau,H_0,\phi_0,x_0)\vert \vert \tau-x_0\vert^{1+\nu_1+\nu_4} \le C_M,
 \end{align*}
for all $(\tau,H_0,\phi_0,x_0)\in V(\xi)$ and all $1\le \vert \mathbf \nu\vert\le M$. Then there exists a constant $K=K(M,\vert R\vert_{C^M},\vert P\vert_{C^M},C_M)>0$, depending only on: (a) $M$, (b) uniform $C^M$ bounds $\vert R\vert_{C^M}$ and $\vert P\vert_{C^M}$ of $R$ and $P$, respectively, and (c) $C_M>0$, such that
\begin{align*}
 \vert D^{\mathbf \nu} W_R(\tau,H_0,\phi_0,x_0)\vert& \le K\vert \tau-x_0\vert^{N+1-\nu_1-\nu_4-\vert \nu\vert},\\
  \vert D^{\mathbf \nu} W_P(\tau,H_0,\phi_0,x_0)\vert\vert \tau-x_0\vert^2 &\le K\vert t- x_0\vert^{N+1-\nu_1-\nu_4-\vert \nu\vert},
\end{align*}
for all $(\tau,H_0,\phi_0,x_0) \in V(\xi)$ and all $1\le \vert \mathbf \nu\vert\le M$.
\end{lemma}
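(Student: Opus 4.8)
The plan is to reduce both estimates to a single application of the Faà di Bruno formula of \lemmaref{faa}, after absorbing the powers of $\underline x$ into the outer functions. Set $\widetilde R(H,\phi,x):=x^{N+1}R(H,\phi,x)$ and $\widetilde P(H,\phi,x):=x^{N-1}P(H,\phi,x)$, so that $W_R=\widetilde R\circ(\underline H,\underline\phi,\underline x)$ and $W_P=\widetilde P\circ(\underline H,\underline\phi,\underline x)$ with $\underline x=x_0-\tau$. Two preliminary remarks make the estimate tractable. First, since $\underline x$ is affine in $(\tau,x_0)$ and independent of $(H_0,\phi_0)$, all derivatives of $\underline x$ vanish except $\partial_\tau\underline x=-1$ and $\partial_{x_0}\underline x=1$; consequently any Faà di Bruno summand whose inner part involves a $\underline x$-derivative other than these two vanishes, and the surviving ones contribute only harmless $\pm1$ factors. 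Second, the Leibniz rule applied to the factor $x^{N+1}$ in $\widetilde R$ gives, for every $\mathbf\lambda$ with $|\mathbf\lambda|\le M$, the weighted bound $|D^{\mathbf\lambda}\widetilde R(\underline H,\underline\phi,\underline x)|\le C(M,|R|_{C^M})\,|\tau-x_0|^{N+1-\lambda_3}$ (and the analogue for $\widetilde P$ with $N-1$ in place of $N+1$), where $\lambda_3$ is the $x$-component of $\mathbf\lambda$; note $\lambda_3\le|\mathbf\lambda|\le M\le N/2<N+1$, so no negative power appears, and $|\tau-x_0|<\xi\le1$ lets us keep the smallest power.

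Expanding $D^{\mathbf\nu}W_R$ by \eqref{DnuW}, a generic summand is a combinatorial constant (bounded in terms of $M$ only) times $D^{\mathbf\lambda}\widetilde R$ along the flow, times $\prod_{j=1}^s[D^{l_j}\underline H,D^{l_j}\underline\phi,D^{l_j}\underline x]^{k_j}$, subject to $\mathbf 0\prec l_1\prec\cdots\prec l_s$, $\sum_j k_j=\mathbf\lambda$, and $\sum_j|k_j|l_j=\mathbf\nu$. The last constraint forces $1\le|l_j|\le|\mathbf\nu|\le M$, so the hypotheses of the lemma apply (with $\mathbf\nu:=l_j$) to each $D^{l_j}\underline H$ and $D^{l_j}\underline\phi$: the $\underline H$-factors are bounded by $C_M$ with no power of $|\tau-x_0|$, while $|D^{l_j}\underline\phi|^{(k_j)_2}\le C_M^{(k_j)_2}|\tau-x_0|^{-(k_j)_2(1+(l_j)_1+(l_j)_4)}$. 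Collecting all factors, the exponent of $|\tau-x_0|$ in the summand is at least $N+1-\sum_j(k_j)_3-\sum_j(k_j)_2(1+(l_j)_1+(l_j)_4)$. Two elementary inequalities close the bookkeeping: since $(k_j)_2\le|k_j|$ and $\sum_j|k_j|(l_j)_1=\nu_1$, $\sum_j|k_j|(l_j)_4=\nu_4$, we get $\sum_j(k_j)_2((l_j)_1+(l_j)_4)\le\nu_1+\nu_4$; and since $\sum_j|k_j|=|\mathbf\lambda|\le|\mathbf\nu|$, we get $\sum_j((k_j)_2+(k_j)_3)\le|\nu|$. Hence the exponent is $\ge N+1-\nu_1-\nu_4-|\nu|$, which is $\ge M+1-\nu_1-\nu_4\ge1>0$ since $N\ge2M$ and $|\nu|\le M$. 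As $|\tau-x_0|<\xi\le1$, each summand is bounded by a constant times $|\tau-x_0|^{N+1-\nu_1-\nu_4-|\nu|}$; summing the finitely many summands yields $|D^{\mathbf\nu}W_R|\le K|\tau-x_0|^{N+1-\nu_1-\nu_4-|\nu|}$ with $K$ depending only on $M$, $|R|_{C^M}$ and $C_M$.

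For $W_P$ the argument is verbatim with $N+1$ replaced by $N-1$ throughout (here $N\ge2M\ge2$ again keeps the derivatives of $x^{N-1}$ bounded for $|\mathbf\lambda|\le M$), producing the exponent $\ge N-1-\nu_1-\nu_4-|\nu|$; multiplying through by $|\tau-x_0|^2$ gives $|D^{\mathbf\nu}W_P|\,|\tau-x_0|^2\le K|\tau-x_0|^{N+1-\nu_1-\nu_4-|\nu|}$, as claimed.

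The main obstacle is precisely this exponent accounting. The singular factors $D^{l_j}\underline\phi$ carry negative powers whose orders $(l_j)_1+(l_j)_4$ are tied to their multiplicities $(k_j)_2$ via $\sum_j|k_j|l_j=\mathbf\nu$, and one must verify that the total negative power they generate never exceeds the $N+1$ (resp.\ $N-1$) units supplied by the prefactor $\underline x^{N\pm1}$; the two inequalities above, together with the standing assumption $N\ge2M$, are exactly what makes this work. A secondary point to watch is that only derivatives of $\underline\phi$ and $\underline H$ of order $\le M$ ever occur (so the hypotheses of the lemma apply), and that the vanishing of the higher-order $\underline x$-derivatives kills the potentially troublesome terms rather than creating them.
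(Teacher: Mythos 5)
Your argument is correct and essentially the same as the paper's: both handle the prefactor $\underline x^{\,N\pm1}$ via the Leibniz rule, apply the Fa\`a di Bruno formula of \lemmaref{faa} to the composition, and control the singular $\underline\phi$-factors through the constraints $\sum_i k_i=\mathbf\lambda$, $\sum_i\vert k_i\vert l_i=\mathbf\nu$, which tie the total negative power of $\vert\tau-x_0\vert$ to $\nu_1+\nu_4+\vert\mathbf\nu\vert$. The only differences are organizational (you absorb the power of $x$ into the outer function before applying Fa\`a di Bruno, whereas the paper first splits off the $(\tau,x_0)$-derivatives by the product rule), and your exponent bookkeeping lands exactly on the stated power $N+1-\nu_1-\nu_4-\vert\mathbf\nu\vert$, which is in fact a cleaner accounting than the intermediate estimate written in the paper's proof.
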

\begin{proof}
We focus on $W_R$. The estimate of $W_P$ can be obtained in a completely analogous way.
First, we write 
 \begin{align*}
  D^{\mathbf \nu} W_R &= D^{(\nu_1,0,0,\nu_4)} \left(\underline x^{N+1} .D^{(0,\nu_2,\nu_3,0)}\underline R\right),
 \end{align*}
using \eqref{xnu}. Here we use $D^{\nu } \underline R$ to denote the partial differentiation of the composition function  $R(\underline H(),\underline \phi(),\underline x())$. Then by the product rule, we have
\begin{equation}\eqlab{DW}
\begin{aligned}
 D^{\mathbf \nu} W_R = \sum_{q_4=0}^{\nu_4}\sum_{q_1=0}^{\nu_1} (-1)^{\nu_1-q_1}& \begin{pmatrix}
                                                                \nu_4\\
                                                                q_4
                                                               \end{pmatrix} \begin{pmatrix}
                                                                \nu_1\\
                                                                q_1
                                                               \end{pmatrix}
                                                               \begin{pmatrix}
                                                                N+1\\
                                                                \nu_4-q_4+\nu_1-q_1
                                                               \end{pmatrix}
                                                               (\nu_4-q_4+\nu_1-q_1)! \\
                                                               &\times \underline x^{N+1-\nu_4+q_4-\nu_1+q_1} D^{(q_1,\nu_2,\nu_3,q_4)}\underline R,
                                                               \end{aligned}
                                                               \end{equation}
 using \eqref{xnu} again. We then use \lemmaref{faa}, in particular \eqref{DnuW} with $\nu=(q_1,\nu_2,\nu_3,q_4)$, $d=4$, $m=3$, $D^{l_{j}} G_1=\underline H_{l_{j}}$, $D^{l_{j}} G_2=\underline \phi_{l_{j}}$, $D^{l_{j}} G_3=\underline x_{l_{j}}$, to estimate $D^{(q_1,\nu_2,\nu_3,q_4)} \underline R$:
 \begin{align*}
  \vert D^{(q_1,\nu_2,\nu_3,q_4)} \underline R\vert &\le K_0 \sum_{p_s((q_1,\nu_2,\nu_3,q_4),\mathbf \lambda)} \prod_{j=1}^s \vert \underline H_{l_{j}}\vert^{k_{j,1}} \vert \underline \phi_{l_{j}}\vert^{k_{j,2}}\vert \underline x_{l_j}\vert^{k_{j,3}}\\
  &\le K_0  \sum_{p_s((q_1,\nu_2,\nu_3,q_4),\mathbf \lambda)} \prod_{j=1}^s C_M^{\vert k_{j}\vert} \vert \underline x\vert^{-k_{j,2}(1+l_{j,1}+l_{j,4})}\\
  &\le K_0 C_M^{M} \vert \underline x\vert^{-q_1-q_4},
 \end{align*}
 writing $k_j=(k_{j,1},k_{j,2},k_{j,3})$, $l_j = (l_{j,1},\ldots,l_{j,4})$,
 for some $K_0=K_0(M,\vert R\vert_{C^M})>0$ depending only on: (a) $M$ and (b) the $C^M$ bound $\vert R\vert_{C^M}$ on the smooth function $R$. In the final inequality, we have used the definition of $p_s$ \eqref{ps}. Specifically, from
\begin{align}
 \sum_{j=1} k_{j,2} (1+l_{j,1}+l_{j,2})<\sum_{j=1} \vert k_{j}\vert (1+l_{j,1}+l_{j,2})\le \sum_{j=1} \vert \nu\vert  (1+l_{j,1}+l_{j,2})\le 1+q_1+q_2.\eqlab{ineq}
\end{align}
 we have concluded that $\sum_{j=1} k_{j,2} (1+l_{j,1}+l_{j,2})\le q_1+q_2$; notice that the first inequality in \eqref{ineq} is \textit{strict}. 
Now using \eqref{DW}, we have
 \begin{align*}
  \vert D^{\mathbf \nu} W_R \vert \le K \vert \tau-x_0\vert^{N+1-\nu_1-\nu_4},
 \end{align*}
for $K=K(M,\vert R\vert_{C^M},C_M)>0$ large enough. 
 \end{proof}
%

Finally, we describe the
the functions
\begin{align*}
 W_\Lambda=\underline x .\Lambda \circ (\underline H,\underline x),\quad W_\Omega =\underline x^{-2}. \Omega\circ (\underline H,\underline x),
\end{align*}
that also appear on the right hand side \eqref{HNphixeqns2}, but are independent of $\underline \phi$.
\begin{lemma}\lemmalab{est2}
 Let $M\in \mathbb N$ and suppose that
 \begin{align*}
  \vert \underline H_{\mathbf \nu}(\tau,H_0,\phi_0,x_0)\vert \le C_M,
 \end{align*}
for all $(\tau,H_0,\phi_0,x_0)\in V(\xi)$ and all $1\le \vert \mathbf \nu\vert\le M$. 
Then there exists a constant $K=K(M,\vert D\vert_{C^M},\vert \Omega\vert_{C^M},C_M)>0$, depending only on: (a) $M$, (b) uniform $C^M$ bounds $\vert D\vert_{C^M}$ and $\vert \Omega \vert_{C^M}$ of $D$ and $\Omega$, respectively, and (c) on a constant $C_M>0$, such that
\begin{align*}
\vert D^{\mathbf \nu}W_\Lambda(\tau,H_0,\phi_0,x_0)\vert\le K,\quad  \vert D^{\mathbf \nu}W_\Omega(\tau,H_0,\phi_0,x_0)\vert\vert \tau-x_0\vert^{2+\nu_1+\nu_4} \le K,
\end{align*}
for all $(\tau,H_0,\phi_0,x_0)\in V(\xi)$ and all $1\le \vert \mathbf \nu\vert\le M$.
\end{lemma}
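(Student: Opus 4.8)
The plan is to repeat the argument of \lemmaref{est}, taking advantage of the fact that $\Lambda$ and $\Omega$ are independent of the angle $\phi$, so that the present estimates are strictly easier. Write $W_\Lambda=\underline x\,\widetilde\Lambda$ and $W_\Omega=\underline x^{-2}\,\widetilde\Omega$ with $\widetilde\Lambda:=\Lambda\circ(\underline H,\underline x)$ and $\widetilde\Omega:=\Omega\circ(\underline H,\underline x)$. The key observation is that, since $\widetilde\Lambda$ and $\widetilde\Omega$ are compositions of $\Lambda,\Omega$ with only $\underline H$ and $\underline x$, the Fa\`a di Bruno formula \eqref{DnuW} applied to them involves only the factors $\underline H_{l_j}$ and $\underline x_{l_j}$; by \eqref{xnu} the latter vanish unless $l_j\in\{(1,0,0,0),(0,0,0,1)\}$, in which case they equal $\pm1$. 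Hence \emph{no} negative power of $\underline x$ is produced by the chain rule, and the only singular prefactor in sight is the explicit $\underline x^{-2}$ in $W_\Omega$.

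For $W_\Lambda$ I would use \eqref{xnu} to write $D^{\mathbf\nu}W_\Lambda=D^{(\nu_1,0,0,\nu_4)}\!\big(\underline x\cdot D^{(0,\nu_2,\nu_3,0)}\widetilde\Lambda\big)$ and expand by the Leibniz rule in $\tau$ and $x_0$. Because $\underline x=x_0-\tau$ is affine, $D^{(a,0,0,b)}\underline x$ equals $\underline x$ when $a=b=0$, equals $\mp1$ when $a+b=1$, and vanishes when $a+b\ge2$; in every case it is bounded by $1$ once $\xi\le1$. Applying \lemmaref{faa} to $\widetilde\Lambda$ (with $d=4$, $m=2$, $G_1=\underline H$, $G_2=\underline x$) and using the hypothesis $|\underline H_{\mathbf\nu}|\le C_M$ for $1\le|\mathbf\nu|\le M$ together with $|\underline x_{l_j}|\le1$, each summand of \eqref{DnuW} is bounded by a constant depending only on $M$, on the $C^M$-norm of $\Lambda$, and on $C_M$; summing the finitely many summands gives $|D^{\mathbf\nu}W_\Lambda|\le K$.

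For $W_\Omega$ the argument is identical except for the prefactor $\underline x^{-2}$. Since $\underline x^{-2}=(x_0-\tau)^{-2}$ is independent of $H_0$ and $\phi_0$ and $\underline x$ is affine, $D^{(a,0,0,b)}\underline x^{-2}$ vanishes unless the remaining multi-index components are zero and is otherwise an explicit integer multiple of $\underline x^{-2-a-b}$, so $|D^{(a,0,0,b)}\underline x^{-2}|\le C\,|\tau-x_0|^{-2-a-b}$ with $C=C(M)$. Combining this with the (now bounded) Fa\`a di Bruno estimate for $\widetilde\Omega$ via the Leibniz rule, and recalling that $|\tau-x_0|=\underline x<\xi<1$ so that the most singular term --- all derivatives falling on $\underline x^{-2}$ --- dominates, one gets $|D^{\mathbf\nu}W_\Omega|\le K\,|\tau-x_0|^{-2-\nu_1-\nu_4}$, which is the claimed bound after multiplication by $|\tau-x_0|^{2+\nu_1+\nu_4}$.

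I do not expect a genuine obstacle here: the delicate estimate is the one in \lemmaref{est}, where the $\phi$-dependence forces the singular weights $|\tau-x_0|^{-(1+l_{j,1}+l_{j,4})}$ on the factors $\underline\phi_{l_j}$ and one must exploit the \emph{strict} inequality in \eqref{ineq} to keep them from accumulating too fast. Lemma~\ref{lemma:est2} is the routine companion to that estimate; together the two lemmas control all four functions $W_\Lambda$, $W_\Omega$, $W_R$, $W_P$ appearing on the right-hand side of \eqref{HNphixeqns2}, which is precisely what closes the induction on $|\mathbf\nu|$ carried out in \appref{est} and thereby yields \lemmaref{est0}.
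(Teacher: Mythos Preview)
Your proposal is correct and is exactly the ``direct calculation'' the paper invokes as its entire proof of \lemmaref{est2}: the same Leibniz/Fa\`a di Bruno machinery as in \lemmaref{est}, with the simplification that $\Lambda$ and $\Omega$ are $\phi$-independent so no singular $\underline\phi_{l_j}$-factors appear and the only negative power of $\underline x$ comes from the explicit $\underline x^{-2}$ prefactor in $W_\Omega$. Your write-up fills in precisely the details the paper leaves implicit, and your remark that the genuinely delicate work is in \lemmaref{est} (via the strict inequality \eqref{ineq}) is on point.
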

\begin{proof}
 Follows from a direct calculation.
\end{proof}

\begin{lemma}\lemmalab{final}
Let $M=\lfloor \frac{N}{2}\rfloor$. Then for any $\mathbf \nu\in \mathbb N_0$ with $\vert \nu\vert \le M$ there exists a constant $C_{\mathbf \nu}$ and a $\xi_{\mathbb \nu}>0$ such that 
\begin{align}
 \vert \underline H_{\mathbf\nu}(\tau,H_0,\phi_0,x_0)\vert \le C_{\mathbf \nu},\quad \vert \underline \phi_{\mathbf\nu}(\tau,H_0,\phi_0,x_0)\vert\vert \tau-x_0\vert^{1+\nu_1+\nu_4}\le C_{\mathbf \nu},\eqlab{estfinal0}
\end{align}
for all $(\tau,H_0,\phi_0,x_0)\in V(\xi_{\mathbf \nu})$. 
\end{lemma}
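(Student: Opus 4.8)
The plan is to prove the estimates \eqref{estfinal0} by induction on the order $n=\vert\mathbf\nu\vert$. For $n=0$ they reduce to the $C^0$-bounds \eqref{Hphibound} of \lemmaref{H0phi0}, together with $\underline x(\tau,H_0,\phi_0,x_0)=x_0-\tau$. So fix $1\le n\le M$, assume \eqref{estfinal0} (with appropriate constants and $\xi>0$) for every multi-index of order at most $n-1$, fix $\mathbf\nu$ with $\vert\mathbf\nu\vert=n$, and apply $D^{\mathbf\nu}$, recall \eqref{znu}, to the system \eqref{HNphixeqns2}. On the left-hand side this produces $\frac{d}{d\tau}\underline H_{\mathbf\nu}$ and $\frac{d}{d\tau}\underline\phi_{\mathbf\nu}$; on the right one expands by the product rule in the explicit powers of $\underline x$ (using \eqref{xnu}) and by the Faa di Bruno formula \eqref{DnuW} in the compositions $\Lambda\circ(\underline H,\underline x)$, $R\circ(\underline H,\underline\phi,\underline x)$, $\Omega\circ(\underline H,\underline x)$ and $P\circ(\underline H,\underline\phi,\underline x)$. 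The crucial structural observation is that the only terms of \eqref{DnuW} containing a derivative of $\underline H$ or $\underline\phi$ of full order $n$ are those with $\vert\mathbf\lambda\vert=1$, namely $\partial_H(\cdot)\,\underline H_{\mathbf\nu}$ and $\partial_\phi(\cdot)\,\underline\phi_{\mathbf\nu}$; by the constraints in \eqref{ps} every other term involves only derivatives of $\underline H,\underline\phi$ of order $\le n-1$ (and the trivial $\underline x$-derivatives). Collecting the genuinely top-order terms, $\underline H_{\mathbf\nu},\underline\phi_{\mathbf\nu}$ satisfy a \emph{linear} variational system
\begin{equation*}
\frac{d}{d\tau}\begin{pmatrix}\underline H_{\mathbf\nu}\\ \underline\phi_{\mathbf\nu}\end{pmatrix}=\begin{pmatrix}a_{11}&a_{12}\\ a_{21}&a_{22}\end{pmatrix}\begin{pmatrix}\underline H_{\mathbf\nu}\\ \underline\phi_{\mathbf\nu}\end{pmatrix}+\begin{pmatrix}b^H_{\mathbf\nu}\\ b^\phi_{\mathbf\nu}\end{pmatrix},
\end{equation*}
where, with $\underline x=x_0-\tau$, one has $a_{11}=O(\underline x)$, $a_{12}=O(\underline x^{\,N+1})$, $a_{21}=O(\underline x^{-2})$, $a_{22}=O(\underline x^{\,N-1})$, and the forcing terms $b^H_{\mathbf\nu},b^\phi_{\mathbf\nu}$ depend only on $\underline H_{\mathbf\mu},\underline\phi_{\mathbf\mu}$ with $\vert\mathbf\mu\vert\le n-1$.

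Next I would bound the forcing terms. Since the right-hand side of \eqref{HNphixeqns2} decomposes into functions of the type $\underline x\,\Lambda\circ$, $\underline x^{\,N+1}R\circ$, $\underline x^{-2}\Omega\circ$, $\underline x^{\,N-1}P\circ$ treated in \lemmaref{est} and \lemmaref{est2}, and since $b^H_{\mathbf\nu},b^\phi_{\mathbf\nu}$ are finite combinations of $D^{\mathbf\mu}$ of such functions with $\vert\mathbf\mu\vert\le n-1$ (together with explicit $\underline x$-powers), the induction hypothesis and \lemmaref{est}, \lemmaref{est2} give, on $V(\xi)$,
\begin{equation*}
\vert b^H_{\mathbf\nu}\vert\le K,\qquad \vert b^\phi_{\mathbf\nu}\vert\le K\,\vert\tau-x_0\vert^{-2-\nu_1-\nu_4},
\end{equation*}
for a constant $K$ depending only on $n$, on $C^n$-bounds of $\Lambda,R,\Omega,P$, and on the induction constants; the sharp singular contribution to $b^\phi_{\mathbf\nu}$ comes from differentiating the $\underline x^{-2}$-factor in the $\phi$-equation against $\Omega(\underline H,\underline x)$, which by hypothesis is bounded and bounded below by $c>0$.

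With these bounds in hand I would integrate the variational system from $\tau=0$, where $\underline H_{\mathbf\nu}(0,\cdot)$ and $\underline\phi_{\mathbf\nu}(0,\cdot)$ are explicit bounded functions (obtained by differentiating $\underline H(0,\cdot)=H_0$ and \eqref{HNphixeqns2} at $\tau=0$), and run a continuity/bootstrap argument in the two weighted norms $\sup\vert\underline H_{\mathbf\nu}\vert$ and $\sup\vert\underline\phi_{\mathbf\nu}(\tau,\cdot)\vert\,\vert\tau-x_0\vert^{1+\nu_1+\nu_4}$ appearing in \eqref{estfinal0}. Assuming these are bounded by fixed multiples of constants on a subinterval, the Duhamel integrals are controlled as follows: $\int_0^\tau a_{11}\underline H_{\mathbf\nu}\,ds=O(\xi)$; $\int_0^\tau a_{12}\underline\phi_{\mathbf\nu}\,ds$ is $O(\xi)$-small because the factor $\underline x^{\,N+1}$ beats $\underline x^{-1-\nu_1-\nu_4}$ for $N>\nu_1+\nu_4$; $\int_0^\tau b^H_{\mathbf\nu}\,ds=O(\xi)$; hence $\underline H_{\mathbf\nu}$ stays bounded for $\xi$ small. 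For the $\phi$-component, after multiplying by $\vert\tau-x_0\vert^{1+\nu_1+\nu_4}$: $\int_0^\tau a_{21}\underline H_{\mathbf\nu}\,ds$ contributes $O(\vert\tau-x_0\vert^{\nu_1+\nu_4})$, which is bounded on $V(\xi)$; $\int_0^\tau a_{22}\underline\phi_{\mathbf\nu}\,ds$ contributes an $O(\xi)$-small term since $N$ is large; and $\int_0^\tau b^\phi_{\mathbf\nu}\,ds=O(\vert\tau-x_0\vert^{-1-\nu_1-\nu_4})$ integrates the critical singularity to exactly the allowed weight. Choosing the constants and then $\xi_{\mathbf\nu}>0$ appropriately, a standard continuation then upgrades the a priori bounds to genuine ones on $V(\xi_{\mathbf\nu})$, which is \eqref{estfinal0} for $\mathbf\nu$; taking the minimum of the finitely many $\xi_{\mathbf\nu}$ and the maximum of the $C_{\mathbf\nu}$ over $\vert\mathbf\nu\vert\le M$ then yields \lemmaref{est0}.

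\textbf{Main obstacle.} The delicate point is the singular off-diagonal coupling $a_{21}=O(\underline x^{-2})$ in the $\phi$-equation: one must verify that the weight $\vert\tau-x_0\vert^{1+\nu_1+\nu_4}$ is precisely the one reproduced upon integration (the exponent $1$ coming from $\underline x^{-2}$ and the extra $\nu_1+\nu_4$ from differentiating $\underline x=x_0-\tau$), so that the induction closes with that exponent, and that the normal form has pushed the angle dependence to order $\underline x^{\,N+1}$ in the $H$-equation (and $\underline x^{\,N-1}$ in the $\phi$-equation) — this is exactly what makes the two equations asymptotically decoupled and lets the bootstrap absorb the cross-coupling as an $O(\xi)$-perturbation, which is why one needs $N>\nu_1+\nu_4$ for all $\vert\mathbf\nu\vert\le M$, hence the choice $M=\lfloor N/2\rfloor$. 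A secondary, more bookkeeping obstacle is to check carefully, via the order constraints in \eqref{ps}, that $b^H_{\mathbf\nu}$ and $b^\phi_{\mathbf\nu}$ genuinely involve only strictly-lower-order derivatives of $\underline H$ and $\underline\phi$, so that \lemmaref{est} and \lemmaref{est2} may legitimately be invoked at each step of the induction.
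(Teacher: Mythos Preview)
Your proposal is correct and follows essentially the same route as the paper: induction on $\vert\mathbf\nu\vert$, Faa di Bruno for the compositions, the observation that the $\phi$-dependent terms carry the high powers $\underline x^{\,N+1}$ and $\underline x^{\,N-1}$ from the normal form, and a bootstrap/continuation argument in the two weighted norms to close the estimates. The only organizational difference is that you make the linear variational structure $(a_{ij},b^H_{\mathbf\nu},b^\phi_{\mathbf\nu})$ explicit, whereas the paper applies \lemmaref{est} and \lemmaref{est2} directly to the full $D^{\mathbf\nu'}W_R$, $D^{\mathbf\nu'}W_P$, $D^{\mathbf\nu'}W_\Lambda$, $D^{\mathbf\nu'}W_\Omega$ with the bootstrap constants already inserted, and then argues that the $\phi$-dependent pieces vanish as $\xi\to 0$ so that the effective constant $K$ depends only on the initial-data bound $\widetilde C_{\mathbf\nu'}$ (via $C_{H,\mathbf\nu'}=2\widetilde C_{\mathbf\nu'}$) and not on $C_{\phi,\mathbf\nu'}$. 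One small imprecision in your write-up: the forcing terms $b^H_{\mathbf\nu},b^\phi_{\mathbf\nu}$ are not literally ``$D^{\mathbf\mu}$ of such functions with $\vert\mathbf\mu\vert\le n-1$'' but rather the specific Faa di Bruno terms in $D^{\mathbf\nu}W$ whose inner derivatives $\underline H_{l_j},\underline\phi_{l_j}$ all have $\vert l_j\vert\le n-1$; the estimates you need for these are obtained by the \emph{proofs} of \lemmaref{est} and \lemmaref{est2} rather than by their statements as formulated.
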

Upon proceeding as in the proof of \lemmaref{contH0}, \eqref{estfinal0} implies that $\underline H_{\mathbf \nu}$ extends continuously to $\overline{V(\xi)}$. Consequently, we complete the proof of \lemmaref{est0} by proving \lemmaref{final}.

To prove \lemmaref{final}, we proceed by induction. It is true for $\mathbf \nu=\textbf 0$, see \eqref{Hphibound} and the proof of \lemmaref{H0phi0}. Next, suppose that it is true for $\vert \mathbf \nu\vert =n$. We then consider $\mathbf \nu':=\mathbf \nu+\mathbf \delta_i$, with $\mathbf \delta_i = (\delta_{i,1},\ldots,\delta_{i,4})$ for
\begin{align*}
 \delta_{i,j} = \begin{cases}
                 1 & j=i,\\
                 0 & j\ne i.
                \end{cases}
\end{align*}
\begin{lemma}\lemmalab{ic}
 For $\widetilde C_{\mathbf \nu'}>0$ large enough and $\xi>0$ small small enough, we have that 
\begin{align*}
 \vert \underline H_{\mathbf\nu'}(0,H_0,\phi_0,x_0)\vert\le  \widetilde  C_{\mathbf \nu'},\quad  \vert \underline \phi_{\mathbf\nu'}(0,H_0,\phi_0,x_0)\vert \vert x_0\vert^{1+\nu_1+\nu_4+\delta_{i,1}+\delta_{i,4}} \le \widetilde  C_{\mathbf \nu'},
\end{align*}
for all $(0,H_0,\phi_0,x_0)\in V(\xi)$
\end{lemma}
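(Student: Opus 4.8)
The plan is to prove \lemmaref{ic} by a direct computation at $\tau=0$, using that the flow is the identity there. Throughout write $\mathbf\nu'=(\nu_1',\nu_2',\nu_3',\nu_4')$, and recall $\underline H(0,\cdot)=H_0$, $\underline\phi(0,\cdot)=\phi_0$, $\underline x(0,\cdot)=x_0$. I would split into two cases according to whether $\nu_1'=0$ or $\nu_1'\ge1$; note that the case split on $\nu_1'$ is cleaner than on $i$, since $\nu'=\nu+\mathbf\delta_i$ with $i\in\{2,3,4\}$ may still have $\nu_1'\ge1$.

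If $\nu_1'=0$, then $D^{\mathbf\nu'}$ differentiates only in the initial conditions, so it commutes with setting $\tau=0$ and one gets $\underline H_{\mathbf\nu'}(0,\cdot)=D^{\mathbf\nu'}_{(H_0,\phi_0,x_0)}H_0\in\{0,1\}$ and $\underline\phi_{\mathbf\nu'}(0,\cdot)=D^{\mathbf\nu'}_{(H_0,\phi_0,x_0)}\phi_0\in\{0,1\}$. Here necessarily $i\ne1$ (else $\nu_1'\ge1$), so $\delta_{i,1}=0=\nu_1$ and the prescribed weight is $|x_0|^{1+\nu_1+\nu_4+\delta_{i,1}+\delta_{i,4}}=|x_0|^{1+\nu_4'}\le1$ for $\xi\le1$; both bounds then hold with $\widetilde C_{\mathbf\nu'}=1$.

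If $\nu_1'\ge1$, I would peel off one $\tau$-derivative: set $\mathbf\mu:=\mathbf\nu'-\mathbf\delta_1$, so $|\mathbf\mu|=|\mathbf\nu'|-1=n$, $\mu_1=\nu_1'-1$, $\mu_4=\nu_4'$. Since partials commute and, along the flow, $\tfrac{d}{d\tau}\underline H=\tfrac1{3\delta}(W_\Lambda+W_R)$, $\tfrac{d}{d\tau}\underline\phi=\tfrac1{3\delta}(W_\Omega+W_P)$ with $W_\Lambda,W_R,W_\Omega,W_P$ the functions of \appref{est}, we have $\underline H_{\mathbf\nu'}=\tfrac1{3\delta}D^{\mathbf\mu}(W_\Lambda+W_R)$ and $\underline\phi_{\mathbf\nu'}=\tfrac1{3\delta}D^{\mathbf\mu}(W_\Omega+W_P)$. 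The inductive hypothesis of \lemmaref{final} holds at all orders $\le n=|\mathbf\mu|$, so \lemmaref{est} and \lemmaref{est2} apply at order $|\mathbf\mu|$ and, evaluated at $\tau=0$ (where $|\tau-x_0|=x_0$), give by continuity
\begin{align*}
 |D^{\mathbf\mu}W_\Lambda(0,\cdot)| &\le K, & |D^{\mathbf\mu}W_R(0,\cdot)| &\le K\,x_0^{\,N+1-\mu_1-\mu_4-|\mathbf\mu|}, \\
 |D^{\mathbf\mu}W_\Omega(0,\cdot)| &\le K\,x_0^{-2-\mu_1-\mu_4}, & |D^{\mathbf\mu}W_P(0,\cdot)| &\le K\,x_0^{\,N-1-\mu_1-\mu_4-|\mathbf\mu|}.
\end{align*}
Because $\mu_1+\mu_4\le|\mathbf\mu|=n\le M-1$ and $N\ge2M$, the $W_R$-exponent satisfies $N+1-\mu_1-\mu_4-|\mathbf\mu|\ge N+3-2M\ge3$, so $|D^{\mathbf\mu}W_R(0,\cdot)|\le K\xi^3$ and thus $|\underline H_{\mathbf\nu'}(0,\cdot)|\le\widetilde C_{\mathbf\nu'}$. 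For the angle, $N-1-\mu_1-\mu_4-|\mathbf\mu|\ge-2-\mu_1-\mu_4$ (equivalently $N+1\ge n$), so the $W_\Omega$-term dominates and $|\underline\phi_{\mathbf\nu'}(0,\cdot)|\le C\,x_0^{-2-\mu_1-\mu_4}$. Since $1+\nu_1+\nu_4+\delta_{i,1}+\delta_{i,4}=1+\nu_1'+\nu_4'=2+\mu_1+\mu_4$, this yields $|\underline\phi_{\mathbf\nu'}(0,\cdot)|\,|x_0|^{1+\nu_1+\nu_4+\delta_{i,1}+\delta_{i,4}}\le C$, the claimed bound.

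The only genuine obstacle is the exponent bookkeeping: one must check that the negative power $x_0^{-2-\mu_1-\mu_4}$ produced by the singular factor $x^{-2}$ in the $\phi$-equation is cancelled exactly by the weight $|x_0|^{1+\nu_1'+\nu_4'}$, and that the $x^{N}$, $x^{N+1}$ prefactors of the normal form still leave a surplus after $|\mathbf\mu|\le M$ derivatives — which is precisely where $M=\lfloor N/2\rfloor$ (i.e.\ $N\ge2M$) is used. Everything else is the product rule together with the Faà di Bruno estimates of \appref{est}. Finally, \lemmaref{ic} only supplies the data at $\tau=0$; to complete the induction for \lemmaref{final} one still propagates these bounds forward in $\tau$ via the integral form of \eqref{HNphixeqns2} and a Grönwall-type argument, exactly as in the proof of \lemmaref{H0phi0}.
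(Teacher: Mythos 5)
Your proposal is correct and follows essentially the same route as the paper: at $\tau=0$ the flow is the identity (so pure initial-condition derivatives are $0$ or $1$), and when $\nu_1'\ge 1$ one peels off a single $\tau$-derivative via \eqref{HNphixeqns2} and bounds the resulting $D^{\mathbf\mu}W_\Lambda$, $D^{\mathbf\mu}W_R$, $D^{\mathbf\mu}W_\Omega$, $D^{\mathbf\mu}W_P$ at $\tau=0$ using the induction hypothesis together with \lemmaref{est} and \lemmaref{est2}. The paper states this tersely ("the result then follows upon using the induction hypothesis..."); your version merely makes the exponent bookkeeping explicit and splits cases on $\nu_1'$ rather than $\nu_1$, which is a cosmetic improvement, not a different argument.
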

\begin{proof}
 By definition $\underline H(0,H_0,\phi_0,x_0)=H_0$, $\underline \phi(0,H_0,\phi_0,x_0)=\phi_0$, $\underline x(0,H_0,\phi_0,x_0)=x_0$. Consequently, if $\nu=(0,\nu_2,\nu_3,\nu_4)$ then $\underline z_{\mathbf \nu}(0,\ldots)=0$ for $z=H,\phi,x$ (unless $\underline H_{(0,1,0,0)}=1$, $\underline \phi_{(0,0,1,0)}=1$, or $\underline x_{(0,0,0,1)}=1$). On the other hand, if $\nu_1\ge 1$ then 
by \eqref{HNphixeqns2}
 \begin{align*}
  \underline H_{\mathbb \nu+\mathbb \delta_{i}}\big\vert_{\tau=0} &= \frac{1}{3\delta}\left( D^{(\nu_1-1,\nu_2,\nu_3,\nu_4)+\mathbb \delta_{i}}(\underline x. \Lambda(\underline H,\underline x))_{\tau=0}+D^{(\nu_1-1,\nu_2,\nu_3,\nu_4)+\mathbb \delta_{i}} (\underline x^{N+1}. R(\underline H,\underline \phi,\underline  x))_{\tau=0}\right),\\
  \underline \phi_{\nu+\mathbb \delta_{i}}\big\vert_{\tau=0} &=\frac{1}{3\delta}\left( D^{(\nu_1-1,\nu_2,\nu_3,\nu_4)+\mathbb \delta_{i}}(\underline x^{-2} .\Omega(\underline H,\underline x))_{\tau=0}+D^{(\nu_1-1,\nu_2,\nu_3,\nu_4)+\mathbb \delta_{i}} (\underline x^{N-1}. P(\underline H,\underline \phi,\underline  x))\right)_{\tau=0}.
 \end{align*}
The result then follows upon using the induction hypothesis, \lemmaref{est} and \lemmaref{est2}.

 \end{proof}
We now consider 
\begin{align*}
 \frac{d \underline H_{\mathbf \nu'}}{d\tau} &=\frac{1}{3\delta}\left( D^{\mathbf \nu'} (\underline x. \Lambda(\underline H,\underline x))+D^{\mathbf \nu'} (\underline x^{N+1} .R(\underline H,\underline \phi,\underline  x))\right),\\
 \frac{d \underline \phi_{\mathbf \nu'}}{d\tau} &=\frac{1}{3\delta}\left( D^{\mathbf \nu'} (\underline x^{-2} .\Omega(\underline H,\underline x))+D^{\mathbf \nu'} (\underline x^{N-1} .P(\underline H,\underline \phi,\underline  x))\right).
\end{align*}
Let $C_{H,\mathbf \nu'}:=2\widetilde  C_{\mathbf \nu'}$ and $C_{\phi,\mathbf \nu'}>\widetilde  C_{\mathbf \nu'}$. We may take $\widetilde  C_{\mathbf \nu'}$ large enough such that $\widetilde  C_{\mathbf \nu'}\ge C_{\nu}$ for all $\vert \nu\vert\le n$. Then by \lemmaref{ic}, we have that
 \begin{align}
 \vert \underline H_{\mathbf\nu'}(\tau,H_0,\phi_0,x_0)\vert \le C_{H,\mathbf \nu'},\quad \vert \underline \phi_{\mathbf\nu'}(\tau,H_0,\phi_0,x_0)\vert\vert \tau-x_0\vert^{1+\nu_1+\nu_4+\delta_{i,1}+\delta_{i,4}}\le C_{\phi,\nu'},\eqlab{estfinal}
\end{align}
for all $\tau\in (0,\tau_0)$, with $\tau_0(H_0,\phi_0,x_0)>0$ small enough. In fact, due to \lemmaref{est}, and the induction hypothesis, we obtain that
\begin{align*}
D^{\mathbf \nu'} (\underline x^{N+1} .R(\underline H,\underline \phi,\underline  x))\rightarrow 0,\quad
D^{\mathbf \nu'} (\underline x^{N-1} .P(\underline H,\underline \phi,\underline  x))\rightarrow 0
\end{align*}
as $\tau,x_0\rightarrow 0$ for any $C_{H,\mathbf \nu'}, C_{\phi,\mathbf \nu'}>0$. Consequently, by \lemmaref{est2}, we find for $\xi>0$ small enough, that there is a constant $K=K(M,\vert \Lambda\vert_{C^M},\vert \Omega\vert_{C^M}, \widetilde C_{\mathbf \nu'})$ depending only on: (a) $M$, (b) uniform $C^M$ bounds $\vert \Lambda\vert_{C^M}$ and $\vert \Omega\vert_{C^M}$ of $\Lambda$ and $\Omega$, respectively, and on (c) $\widetilde C_{\mathbf \nu'}$, such that 
\begin{align*}
\big \vert \frac{d \underline H_{\mathbf \nu'}}{d\tau}\big \vert &\le K,\\
 \big \vert \frac{d \underline \phi_{\mathbf \nu'}}{d\tau}\big\vert &\le \underline x^{-2-\nu_1-\nu_4-\delta_{i,1}-\delta_{i,4}} K,
\end{align*}
for all $(\tau,H_0,\phi_0,x_0)\in V(\xi)$ and $\tau\in (0,\tau_0)$ . The main observation here is that by taking $\xi>0$ small enough, we ensure that $K$ is independent of $C_{H,\nu'}$ and $C_{\phi,\mathbf \nu'}$. We then integrate and use 
$$ \bigg\vert \int_{0}^\tau \underline x^{-q} dt\bigg \vert \le \frac{2}{q-1} \vert \tau-x_0\vert^{1-q},$$  for any $q>1$. This produces the following
\begin{align*}
 \vert \underline H_{\nu'}(\tau,H_0,\phi_0,x_0)\vert &\le \widetilde  C_{\mathbf \nu'}+\vert x_0\vert K,\\
 \vert \underline \phi_{\nu'}(\tau,H_0,\phi_0,x_0)\vert  &\le \widetilde  C_{\mathbf \nu'}\vert x_0\vert^{-1-\nu_1-\nu_4-\delta_{i,1}-\delta_{i,4}} + \frac{2}{1+\nu_1+\nu_4+\delta_{i,1}+\delta_{i,4}} \vert \tau-x_0\vert^{-1-\nu_1-\nu_4-\delta_{i,1}-\delta_{i,4}} K\\
 &\le \vert \tau-x_0\vert^{-1-\nu_1-\nu_4-\delta_{i,1}-\delta_{i,4}} \left(\widetilde  C_{\mathbf \nu'}+\frac{2K}{1+\nu_1+\nu_4+\delta_{i,1}+\delta_{i,4}} \right),
\end{align*}
where we have also used \lemmaref{ic}. We now take 
\begin{align*}
 C_{\phi,\mathbf \nu'}:=\widetilde  C_{\mathbf \nu'}+\frac{2K}{1+\nu_1+\nu_4+\delta_{i,1}+\delta_{i,4}},
\end{align*}
and upon decreasing $\xi>0$ further, we ensure that $\vert x_0\vert \le \frac{\widetilde  C_{\mathbf \nu'}}{ K}$.  
Then it follows that \eqref{estfinal} holds true for all $\tau\in (0,x_0)$
as desired. This completes the proof of \lemmaref{final}.



\end{document}